\theoremstyle{definition}
\newtheorem{theorem}{Theorem}[section]
\newtheorem{lemma}[theorem]{Lemma}
\newtheorem{grule}[theorem]{Rule}
\newtheorem{corollary}[theorem]{Corollary}
\newtheorem{proposition}[theorem]{Proposition}
\newtheorem{warning}[theorem]{Warning}
\newtheorem{conjecture}[theorem]{Conjecture}
\newtheorem{definition}[theorem]{Definition}
\newtheorem{example}[theorem]{Example}
\newtheorem{remark}[theorem]{Remark}
\newtheorem{question}[theorem]{Question}
\newtheorem{assumption}[theorem]{Assumption}
\newtheorem{mainthm}{Theorem}
\newcommand{\Glog}{\mathbb{G}_{\rm log}}
\newcommand{\Pic}{{\rm Pic}}
\renewcommand{\tilde}[1]{\ensuremath{\widetilde{#1}}}
\newcommand{\pt}{{\rm pt}}
\newcommand{\msout}[1]{\text{\sout{\ensuremath{#1}}}}
\newcommand{\ZZ}{\mathbb{Z}}
\newcommand{\CC}{\mathbb{C}}
\newcommand{\NN}{{\mathbb{N}}}
\newcommand{\OO}{\mathcal{O}}
\newcommand{\Aff}{{\mathbb{A}}}
\newcommand{\PP}{\mathbb{P}}
\newcommand{\GG}{\mathbb{G}}
\newcommand{\Spec}{{\text{Spec}\:}}
\newcommand{\Hom}{\text{Hom}}
\newcommand{\Ext}{\text{Ext}}
\newcommand{\RHom}{\text{RHom}}
\newcommand{\dgcat}{\text{dgcat}}
\newcommand{\gp}[1]{#1^{gp}}
\newcommand{\Cl}[1]{{C_{#1}^{\rm log}}}
\newcommand{\Tl}[1]{{T^{\rm log}_{#1}}}
\newcommand{\lkah}[1]{\Omega^{1,{\rm log}}_{#1}}
\newcommand{\Log}{{\mathcal{L}}}
\newcommand{\lpb}{{\arrow[dr, phantom, very near start, "\ulcorner \ell"]}}
\newcommand{\lpbstrict}{{\arrow[dr, phantom, very near start, "\ulcorner \msout{\ell}"]}}
\newcommand{\pb}{{\arrow[dr, phantom, very near start, "\ulcorner"]}}
\newcommand{\bigslant}[2]{{\raisebox{.4em}{$#1$}\left/\raisebox{-.4em}{$#2$}\right.}}
\newcommand{\action}{\:\rotatebox[origin=c]{-90}{$\circlearrowright$}\:}
\newcommand{\Sym}{{\rm Sym}}
\renewcommand{\bar}[1]{\overline{#1}}
\newcommand{\HHl}[1]{{\rm HH}^\ell_{#1}}
\newcommand{\HH}[1]{{\rm HH}_{#1}}
\newcommand{\cHHl}[1]{{\rm HH}^{\ell #1}}
\newcommand{\lccx}[1]{\mathbb{L}^{\rm log}_{#1}}
\newcommand{\ccx}[1]{\mathbb{L}_{#1}}
\newcommand{\Map}{\mathbf{Map}}
\newcommand{\alg}{\mathbf{alg}}
\newcommand{\SSets}{\mathbf{SSets}}
\newcommand{\dSt}{\mathbf{dSt}}
\newcommand{\RSHom}{\mathcal{RHom}}
\newcommand{\D}{\mathbf{D}}
\newcommand{\longsimeq}{\overset{\sim}{\longrightarrow}}
\definecolor{sebgreen1}{rgb}{0.019,0.317,0.149}
\definecolor{sebgreen2}{rgb}{0.784,0.952,0.780}
\newcommand{\Leo}[2][inline]{\todo[linecolor=purple,backgroundcolor=purple!25,bordercolor=purple,#1,shadow,author=Leo]{#2}} 
\newcommand{\Marci}[2][inline]{\todo[linecolor=purple,backgroundcolor=green!25,bordercolor=purple,#1,shadow,author=Marci]{#2}} 
\newcommand{\Francesca}[2][inline]{\todo[linecolor=blue,backgroundcolor=blue!25,bordercolor=blue,#1,shadow,author=Francesca]{#2}} 
\newcommand{\af}[1]{\Theta_{#1}}
\newcommand{\scr}[1]{{\ensuremath{\mathscr{#1}}}}
\newcommand{\bra}[1]{{\left[{#1}\right]}}
\title[Logarithmic Hochschild co/homology]{Logarithmic Hochschild co/homology via formality of derived intersections}
\author{Márton Hablicsek}
\author{Leo Herr}
\author{Francesca Leonardi}
\date{\today}
\begin{document}

\maketitle

\begin{abstract}

We define log Hochschild co/homology for log schemes 
that behaves well for simple normal crossing pairs $(X,D)$ or toroidal singularities. 

We prove a Hochschild-Kostant-Rosenberg isomorphism for log smooth schemes, as well as an equivariant version for log orbifolds. We define cyclic homology and compute it in simple cases. We show that log Hochschild co/homology is invariant under log alterations.

Our main technical result in log geometry shows the tropicalization (Artin fan) of a product of log schemes $X \times Y$ is usually the product of the tropicalizations of $X$ and $Y$. This and the machinery of \emph{formality} of derived intersections facilitate a geometric approach to log Hochschild.

\end{abstract}



\section{Introduction}

\subsection{Overview}

The purpose of this paper is to provide a new, geometric framework to define invariants of log smooth logarithmic schemes/stacks using their combinatorics (their Artin fan). Log smooth log schemes include
\begin{enumerate}
    \item\label{eitem:logexamples1} A normal crossings or s.n.c. divisor $D$ on a smooth scheme $X$.
    \item A toric variety or toric singularity $X$ with its toric divisor $D$. 
    \item\label{eitem:logexamples4} A family of nodal curves $C \to S$ is log smooth (\emph{relatively} over $S$). 
\end{enumerate}
We define logarithmic variants of Hochschild co/homology and cyclic homology. This paper will:
\begin{enumerate}
    \item Define logarithmic Hochschild co/homology theory using the Artin fan of a log scheme/stack and show that the definition agrees with Olsson's definition \cite{olssondraftloghochschild}.
    \item Prove a Hochschild-Kostant-Rosenberg isomorphism \cite{kostant2009differential} in the case of log smooth schemes and log orbifolds.
    \item Establish \emph{formality} for log Hochschild co/homology and consequently invariance under log alterations.
    \item Identify an $S^1$-action on log Hochschild homology to define log cyclic homology using the Artin fan of a log scheme/stack.
\end{enumerate}



\subsection{Why Hochschild co/homology?}\label{sec:hochschild}

Hochschild co/homology is a fundamental algebrogeometric co/homology theory first introduced for associative algebras \cite{hochschild1945cohomology}, then generalized to
schemes and stacks \cite{swan1996hochschild} and to abelian and dg-categories \cite{lowen-vandenbergh, keller2021hochschild} which plays an important role in deformation theory, deformation quantization, and mirror symmetry.

For smooth and proper schemes $X$ over $\CC$, Hochschild homology encodes the vertical parts of the Hodge diamond via the celebrated Hochschild-Kostant-Rosenberg (HKR) isomorphism. If $X$ is Calabi-Yau, the Hochschild cohomology also encodes the horizontal parts of the Hodge diamond, i.e., the Betti numbers of $X$. 



Because Hochschild co/homology is defined for abelian and dg categories, it can define classical invariants for non-commutative schemes. 
Hochschild co/homology is invariant under Morita equivalence for abelian categories (and derived equivalence). In this way, Hochschild homology can be thought of as the non-commutative counterpart of Hodge cohomology. Cyclic homology, the non-commutative version of de Rham cohomology, can similarly be defined through Hochschild homology. 
This defines non-commutative Chern classes for instance, via the Dennis trace map from algebraic $K$-theory to cyclic homology.
\\



This paper takes a
geometric approach to Hochschild and cyclic co/homology for logarithmic schemes and stacks. This is a step towards non-commutative log geometry that circumvents the problem of \emph{log derived categories}. To that end, we include log orbifolds.


\subsection{What are log schemes?}

A log structure on a scheme encodes a boundary divisor, where it may have
mild singularities. 

Normal crossings pairs $(X^\circ, D)$ locally admit a map to $\Aff^n$
\[\exists_{loc} : X \to \Aff^n\]
such that the divisor $D$ is pulled back from the coordinate hyperplanes $V(x_1\cdots x_n) \subseteq \Aff^n$. Log schemes $X = (X^\circ, D)$ similarly admit a local map
\[\exists_{loc} : X \to \Aff_P = \Spec \ZZ[P]\]
to a toric variety such that $D$ is the pullback of the toric divisors on $\Aff_P$. These local maps are called ``charts'', and they encode how components of the toric divisor $D$ intersect.

If $V$ is a toric variety such as $\Aff_P$, the toric divisors and their intersections are visible already in the fan $\Sigma_V$. This fan consists of cones in a lattice $\ZZ^n$. The cones of $\Sigma_V$ are in bijection with torus orbits $V/T$, where $T$ is the dense torus. The finite set $V/T$ is a finite topological space with a continuous map
\[V \to V/T\]
that contains all the information of $V$. 

So normal-crossings pairs, resp.\ log schemes $X = (X^\circ, D)$ admit local maps to finite topological spaces
\[\exists_{loc} : X \to \Aff^n/\GG_m^n, \qquad \text{resp. } \exists_{loc} : X \to V/T\]
coming from the toric varieties $\Aff^n, V$. See Figure \ref{fig:A2conecomplex}. But there still may be no globally defined map $X \to V/T$. 

\begin{figure}
    \centering
    \begin{tikzpicture}
    \draw[->] (-1, 0) to (3, 0);
    \draw[->] (0, -1) to (0, 3);
    \draw[help lines, color=gray!30, dashed] (-1,-1) grid (3, 3);
    \node[left] at (-1, 3) {$\Aff^2$};
    \draw[->] (4, 1) to (5, 1);
    \begin{scope}[shift = {(6, 0)}]
    \filldraw[color=blue!10] (0, 0) rectangle (3, 3);
    \draw[->] (0, 0) to (3, 0);
    \draw[->] (0, 0) to (0, 3);
    \node[left] at (0, 3) {$\af{\Aff^2}$};
    \end{scope}
    \end{tikzpicture}
    \caption{The affine plane $\Aff^2$ and its fan $\Sigma_{\Aff^2} = \NN^2$. There are four cones in $\Sigma_{\Aff^2}$: the origin, the $x-$ and $y-$axes, and the first quadrant. They correspond respectively to the torus orbits in $\Aff^2$ of the torus itself, the $x-$ and $y-$axes, and the origin. The rays of the cone complex $\af{\Aff^2}$ pull back to the components of the toric divisor $V(xy) \subseteq \Aff^2$. The space $\af{\Aff^2}$ is the cone over the intersection complex of the divisor $V(xy) \subseteq \Aff^2$, which is a line segment. }
    \label{fig:A2conecomplex}
\end{figure}

For normal-crossings pairs, we can take the cone $\af{X}$ over the \emph{intersection complex} to get a global map
\[X \to \af{X}.\]
The intersection complex has a point for each component $D_i \subseteq D$, an edge for each nonempty intersection $D_i \cap D_j$, a 2-simplex for each nonempty intersection $D_i \cap D_j \cap D_k$, etc. Locally, each intersection looks like the topological space $\Aff^n/\GG_m^n$, so $\af{X}$ is the union of several copies of $\Aff^n/\GG_m^n$.

There is likewise a finite topological space $\af{X}$ for a log scheme $X$ with a well defined map
\[X \to \af{X}\]
such that $D \subseteq X^\circ$ is pulled back from the toric divisors on $\af{X}$. The space $\af{X}$ has a cover by $V/T$, ranging over various toric varieties $V$. This idea goes back to \cite{kempfknudsenmumfordsaintdonatkkmstoroidalembeddings}. We use the \emph{Artin fan} $\af{X}$ \cite{wisebounded}, an algebraic stack that is better behaved. See \cite{rendimentodeicontiwisepandharipandeherrmymolcho} for comparison and \cite{functorialtropicalizationulirsch} for alternatives.

One can \emph{define} log schemes as pairs of a scheme $X$ with a map
\[X \to \af{X}\]
to a certain finite topological space (algebraic stack) built as the union of torus orbits $V/T$ of toric varieties $V$. All schemes have a trivial log structure $X \to T/T = \pt$ where $D = \varnothing$.

\subsubsection{The log diagonal}
The main technical problem with Artin fans is that they are not functorial. Given a map $X \to Y$ of log schemes, there need not be a commutative square
\[
\begin{tikzcd}
X \ar[r] \ar[d] \ar[dr, phantom, "\times"]       &Y \ar[d]      \\
\af{X} \ar[r]      &\af{Y}.
\end{tikzcd}    
\]
Such squares exist locally on $X$ and $Y$, but not globally. 

Log Hochschild homology revolves around the diagonal map $\Delta : X \to X \times X$. Our main technical contribution is that, for the diagonal $\Delta$, there is indeed a commutative square
\[
\begin{tikzcd}
X \ar[r] \ar[d] \ar[dr, phantom, "\circ"]       &X \times X \ar[d]         \\
\af{X} \ar[r]      &\af{X \times X}.
\end{tikzcd}
\]
This results from our characterization of the Artin fan of a product of log smooth log schemes.

\begin{mainthm}[{Theorem \ref{thm:AFdiagonal}}]\label{mainthm:artinfan}

Suppose $X, Y$ are log smooth and quasicompact. Then the Artin fan of the product $X \times Y$ coincides with the product of the Artin fans
\[\af{X \times Y} \longsimeq \af{X} \times \af{Y}. \]
    
\end{mainthm}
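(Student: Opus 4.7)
The plan is to show that $\af{X} \times \af{Y}$, together with the natural map from $X \times Y$, satisfies the defining universal property of $\af{X \times Y}$: being the initial strict map from $X \times Y$ into a log algebraic stack locally modeled on toric quotients $[\Aff_P/T_P]$. This requires three ingredients: constructing a natural strict morphism $X \times Y \to \af{X} \times \af{Y}$, verifying that the target is itself an Artin fan, and checking the universal property étale-locally by reduction to a toric computation.

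The morphism exists because the canonical maps $X \to \af{X}$ and $Y \to \af{Y}$ precomposed with the two projections from $X \times Y$ combine into a map to the product, which is strict since strictness is local and each factor is strict. The target is an Artin fan because locally
\[[\Aff_P/T_P] \times [\Aff_Q/T_Q] = [\Aff_{P \oplus Q}/T_{P \oplus Q}],\]
so the product of Artin fans is again locally a toric quotient. For the étale-local check, log smoothness provides charts $X_\alpha \to \Aff_{P_\alpha}$ and $Y_\beta \to \Aff_{Q_\beta}$ whose products $X_\alpha \times Y_\beta \to \Aff_{P_\alpha \oplus Q_\beta}$ are charts for $X \times Y$. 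A direct toric computation identifies the local Artin fan of $X_\alpha \times Y_\beta$ with $[\Aff_{P_\alpha \oplus Q_\beta}/T_{P_\alpha \oplus Q_\beta}]$, which matches the local factor of $\af{X} \times \af{Y}$; since the Artin fan is determined étale-locally, this globalizes to the desired isomorphism.

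The main obstacle is the non-functoriality of Artin fans highlighted just before the statement: a morphism $f: Z \to W$ of log schemes need not induce a morphism $\af{Z} \to \af{W}$, so one cannot naively build a map $\af{X \times Y} \to \af{X} \times \af{Y}$ out of the two projections at the level of Artin fans. The workaround is to bypass functoriality entirely by exploiting the canonicity of the individual maps $X \to \af{X}$ and $Y \to \af{Y}$, which suffices to produce a global strict morphism $X \times Y \to \af{X} \times \af{Y}$ without ever moving a projection between Artin fans. Quasicompactness enters to control the combinatorial type of the charts (finitely many cones) and to ensure that $\af{X} \times \af{Y}$ retains the finiteness properties required of an Artin fan.
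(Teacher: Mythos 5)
There is a genuine gap. Your construction of the strict map $X \times Y \to \af{X} \times \af{Y}$ and the identification $\af{P} \times \af{Q} = \af{P \oplus Q}$ of local models are both fine, but the step ``since the Artin fan is determined étale-locally, this globalizes'' is false, and it is exactly where the content of the theorem lives. The Artin fan is the \emph{initial} factorization of $X \to \Log$ through an étale representable map; initiality is a global condition, and the gluing of the local Artin cones $\af{P_\alpha}$ into $\af{X}$ is governed by how the logarithmic strata of $X$ connect globally, not by the local monoids. The paper's own nodal cubic is the standard illustration: étale-locally it is $V(xy) \subseteq \Aff^2$ with local model $\af{}^2$, but its Artin fan is $\af{}^2$ with the two rays identified. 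Similarly, $\Aff^1$ with divisorial structure at two points has local model $\af{}$ near each point, yet its Artin fan is two copies of $\af{}$ glued along the open point. So even granting that each $X_\alpha \times Y_\beta$ has local model $\af{P_\alpha \oplus Q_\beta}$, nothing in your argument rules out that $\af{X \times Y}$ glues these local pieces differently from $\af{X} \times \af{Y}$ — concretely, that the canonical étale map $\af{X \times Y} \to \af{X} \times \af{Y}$ supplied by the universal property fails to be injective on geometric points.

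What closes this gap in the paper is a fiber-connectedness argument that your proposal never touches. The paper first shows (Proposition \ref{prop:AFconnfibers}) that for $X$ quasicompact and log smooth the map $X \to \af{X}$ is surjective with geometrically connected fibers, identifying $\af{X}$ with $\pi_0(X/\Log)$; it then shows that $X \times Y \to \af{X} \times \af{Y}$ has geometrically connected fibers (composing an open map with connected fibers with another map with connected fibers, \cite[0387]{sta}); and finally it invokes Proposition \ref{prop:AFisoconnfibers} and Lemma \ref{lem:artinfanisom}, which say that a strict étale representable map of cone stacks receiving a common surjection with connected fibers must be bijective on geometric points, hence an isomorphism. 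This is where quasicompactness and log smoothness (more generally, log flatness with log reduced geometric fibers) are actually used — not, as you suggest, to control the number of cones in a chart. To repair your proof you would need to supply the connectedness of the geometric fibers of $X \times Y$ over $\af{X} \times \af{Y}$, at which point you would essentially be reproducing the paper's argument.
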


We prove a relative version over a base $S$. Along the way, we show the geometric fibers of the tropicalization map $X \to \af{X/S}$ are connected in greater generality than was previously known.

\subsection{Hochschild co/homology of log schemes}

Let $X$ be a quasi-compact, separated, Noetherian scheme. One can define its ordinary Hochschild co/homology (Section \S \ref{sec:hochschild}) using either:
\begin{itemize}
    \item the bar resolution, i.e. the derived self-intersection $X \to X\times X$, or 
    \item the abelian or dg category of coherent sheaves on $X$.
\end{itemize}
The two definitions coincide \cite{lowen-vandenbergh, keller2021hochschild}. 

For log schemes $X$, neither works. 
\textbf{Problem A:} The diagonal map $X\to X\times X$ is not strict, i.e. it does not respect log structures. 
This makes it difficult to understand the logarithmic derived self-intersection. 
\textbf{Problem B:} There is no widely accepted definition of an abelian (or dg) category of coherent sheaves on a log scheme \cite{mehta1980moduli, yokogawa1995infinitesimal, talpo2018infinite, vaintrob2017categorical}. See Remark \ref{rem:nologcat}. 

We define log Hochschild co/homology using the combinatorics of Artin fans to circumvent these problems. By Theorem \ref{mainthm:artinfan}, there is a commutative, non-Cartesian square
\begin{equation}\label{eqn:mainAFdiagonal}
\begin{tikzcd}
X \ar[r] \ar[d]       &X \times X \ar[d]         \\
\af{X} \ar[r]      &\af{X \times X}.
\end{tikzcd}   
\end{equation}

Write $B = B_X$ for the pullback, which factors the diagonal
\[X \to B \to X \times X.\]
The map $i : X \to B$ is strict and $B \to X \times X$ is log étale. 

\begin{remark}

In the semistable case, $B$ recovers Kato-Saito's ``log diagonal'' \cite{katosaitologdiagonal} as in Remark \ref{rmk:BvsKatoSaitologdiagonal}. They used it to define a log intersection product that coincides with the ones defined in \cite{mythesislogprodfmla}, \cite{logquantumkproductchouherrlee}. 

\end{remark}

To overcome \textbf{Problem A}, we define log Hochschild co/homology via the derived self-intersection of the strict map $i:X\to B$. The log \'etale map $B\to X\times X$ does not affect the log Hochschild homology. We show this derived self-intersection is \emph{formal} in the sense of \cite{arinkin-caldararu}.

To avoid \textbf{Problem B}, we define the Hochschild homology $\HHl{X}$ of a log scheme/stack as an endofunctor 
\[i^*i_*:\D(X)\to \D(X)\]
of a dg enhancement $\D(X)$ of the derived category of coherent sheaves on the underlying scheme $X^\circ$. Here the functors $i_*$ and $i^*$ are derived.

\begin{question}

Does our notion of log Hochschild co/homology coincide with the ones defined using any of the ``log derived categories'' from Remark \ref{rem:nologcat}? 
    
\end{question}

For technical reasons, the answer should be no. But we suspect one can recover categorical log Hochschild from our notion. Our notion may coincide with one defined using the \emph{rhizomic topology} \cite{rhizomic}.



\subsection{The geometric approach}

In this paper, we provide a new, geometric approach to define logarithmic Hochschild homology for schemes and stacks providing a complementary approach to \cite{HKRlogHH}. This approach has many advantages that we highlight below.

\subsubsection{Formality}

Our version of the logarithmic Hochschild homology is defined via a derived self-intersection $i:X\to B$. We call this derived self-intersection the \textit{logarithmic derived loop space}. Using the factorization of the diagonal $X\to B\to X\times X$, we show that the logarithmic derived loop space is formal in the sense of \cite{arinkin2012self}.


\begin{mainthm}[{Theorems \ref{thm:formality}, \ref{thm:loghkr}}]\label{mainthm:loghkr}

Let $X$ be a quasicompact, weakly log separated (Definition \ref{defn:logsep}), log smooth log scheme. 

\begin{itemize}

\item The endofunctor $i^*i_*$ is formal: there exists an isomorphism of dg endofunctors $\D(X)\to \D(X)$
\[i^*i_*(-) \longsimeq (-)\otimes \Sym(\Omega^{1, \log}_X[1]).\]
Here, $\Sym$ is the derived symmetric algebra. 

\item The log Hochschild homology of $\OO_X$ can be computed in terms of the log cotangent bundle:
\begin{equation}\label{eqn:mainloghkr}
R^n\Gamma(X,\HHl{X}(\OO_X))=\oplus_{q-p=n}H^p(X,\Omega^{q,\log}_X).    
\end{equation}

\end{itemize}
  
\end{mainthm}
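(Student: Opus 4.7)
The plan is to apply the formality theorem for derived self-intersections in the style of Arinkin-C\u{a}ld\u{a}raru \cite{arinkin2012self,arinkin-caldararu} to the strict closed immersion $i : X \to B$ produced by Theorem \ref{mainthm:artinfan}, and then read off the HKR decomposition by applying $R\Gamma(X, -)$ to the resulting formality isomorphism at $\OO_X$. First I would identify the conormal bundle: because $B \to X \times X$ is log \'etale, its log cotangent complex agrees with $\Omega^{1,\log}_{X \times X}$ pulled back to $B$, and strictness of $i$ lets me identify the log conormal sheaf of $i$ with the ordinary conormal of the underlying embedding, which then pulls back to $\Omega^{1,\log}_X$ along the diagonal. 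Log smoothness makes this sheaf locally free of finite rank, so $i$ is a regular closed immersion.

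Next I would verify the formality hypothesis for $i$. The key geometric input is that $i : X \to B$ admits retractions: each projection $p_j : X \times X \to X$ factors as $B \to X \times X \to X$, and by construction $p_j \circ i = \mathrm{id}_X$. Such a retraction splits the first-order neighborhood of $X$ in $B$ and extends the conormal bundle to the formal neighborhood, which is precisely what is needed to force the HKR obstruction class in $\Ext^2(\Sym^2 \Omega^{1,\log}_X, \Omega^{1,\log}_X)$ of \cite{arinkin2012self,arinkin-caldararu} to vanish. Formality then provides the promised dg-functorial isomorphism
\[ i^*i_*(-) \longsimeq (-) \otimes \Sym(\Omega^{1,\log}_X[1]). \]
To deduce the HKR formula \eqref{eqn:mainloghkr}, I would use that since $\Omega^{1,\log}_X[1]$ sits in odd cohomological degree the derived symmetric algebra collapses to the log de Rham algebra $\Sym(\Omega^{1,\log}_X[1]) \simeq \bigoplus_{q \geq 0} \Omega^{q,\log}_X[q]$; applying $R\Gamma(X,-)$ and unpacking the shifts yields the claimed direct sum of $H^p(X, \Omega^{q,\log}_X)$ indexed by $q - p = n$ under the homological Hochschild convention.

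The main obstacle I foresee is handling the stacky nature of $B$ when invoking the Arinkin-C\u{a}ld\u{a}raru machinery, since $\af{X}$ is an algebraic stack rather than a scheme and $B$ is a stacky pullback. I would address this by working smooth-locally on $\af{X}$, where $B$ is modeled on explicit toric pullbacks for which the two retractions above are manifest, and then descending the formality isomorphism of endofunctors via a smooth atlas; the weak log separation hypothesis from Definition \ref{defn:logsep} is what controls the geometry of $B$ sufficiently to make this local-to-global step go through cleanly. A secondary subtlety will be checking that the formality isomorphism is genuinely functorial in the dg sense (rather than merely pointwise at $\OO_X$), which is what is required for subsequent applications such as log cyclic homology.
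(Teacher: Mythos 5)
Your proposal matches the paper's proof in all essentials: the paper likewise identifies $N^\vee_{X/B} \simeq \Omega^{1,\log}_X$ via the log \'etaleness of $B \to X\times X$ and strictness of $i$, uses the projection $B \to X \times X \to X$ restricted to the first infinitesimal neighborhood as a ``quantized cycle'' (your splitting of $X^{(1)}$, which is exactly the condition killing the Arinkin--C\u{a}ld\u{a}raru obstruction), and reads off HKR by evaluating the resulting functor isomorphism at $\OO_X$. One small adjustment: weak log separation enters earlier than where you place it --- its role is to make $i : X \to B$ a proper monomorphism, hence a genuine closed (and then regular) immersion with a globally defined first infinitesimal neighborhood, while the stackiness you worry about is dispatched simply because $B \to X \times X$ is representable, so one may replace $X$ and $B$ by schemes smooth-locally.
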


The theorem above is a natural generalization of the Hochschild-Kostant-Rosenberg isomorphism (HKR) \cite{kostant2009differential} on ordinary Hochschild homology providing a new proof of the logarithmic HKR theorem \cite{HKRlogHH}. However, the formality of the logarithmic derived loop space gives a much stronger result strengthening the result of \cite{HKRlogHH}, namely, the first part of Theorem \ref{mainthm:loghkr} tells us that \emph{any} perfect complex enjoys an HKR isomorphism similar to the second part of Theorem \ref{mainthm:loghkr}. 

The chain complexes in \eqref{eqn:mainloghkr} each have natural algebra structures. The algebra structure on the log Hochschild homology $R\Gamma(X,\HHl{X}(\OO_X))$ comes from the structure sheaf of the derived self-intersection of $X\to B$, and $\oplus_{q-p=\star}H^p(X,\Omega^{q,\log}_X)$ has the wedge product. Using the formality of the logarithmic derived loop space, the second part of Theorem \ref{mainthm:loghkr} can be promoted to a natural isomorphism of graded $k$-algebras.

Dual versions of Theorem \ref{mainthm:loghkr} hold for log Hochschild \emph{co}homology. However, the algebra structure on the log Hochschild cohomology is much more involved. We conjecture Kontsevich's Duflo isomorphism \cite{kontsevich2003deformation, calaque2010hochschild} for log Hochschild cohomology in Section \S \ref{ss:duflo}.

\subsubsection{Invariance of log Hochschild homology under log alterations}

If $D = X \setminus U$ is an s.n.c. compactification of an open variety, blowing up the strata of $D$ results in a ``log blowup'' of $X$. These yield different compactifications of $U$. 

More generally, we can take roots along components of $D$ to obtain ``log alterations.'' Our log Hochschild co/homology is ``logarithmically independent'' of the choice of compactification $U \subseteq X$. 

\begin{mainthm}[{Theorem \ref{thm:logHHlogaltn}}]\label{mainthm:logHHalterations}

Let $\pi : X \to Y$ be log étale, for example a log alteration. Under assumptions on $Y$, the log Hochschild co/homology of a sheaf/complex $F$ on $Y$ pulls back to that of $\pi^*F$ on $X$:
\[\pi^* \HHl Y (F) = \HHl X (\pi^* F), \qquad \pi^* \cHHl Y (F) = \cHHl X (\pi^* F).\]

\end{mainthm}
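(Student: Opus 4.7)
The plan is to leverage the strict factorization $i_X : X \to B_X \to X \times X$ together with log étaleness of $\pi$, and then apply flat base change. By Theorem \ref{mainthm:artinfan}, $B_X$ is canonically the fibre product $X \times_{\af X} X$ (and similarly $B_Y \simeq Y \times_{\af Y} Y$), since both equal the pullback of the diagonal $\af X \to \af X \times \af X \simeq \af{X \times X}$. Log étaleness of $\pi$ implies that the induced morphism on Artin fans $\af\pi : \af X \to \af Y$ is étale, and we obtain a canonical comparison
\begin{equation*}
\tilde\pi : B_X = X \times_{\af X} X \longrightarrow Y \times_{\af Y} Y = B_Y
\end{equation*}
fitting into a commutative square with $\pi$, $i_X$, and $i_Y$.

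Next I would verify that the square
\begin{equation*}
\begin{tikzcd}
X \ar[r, "i_X"] \ar[d, "\pi"] & B_X \ar[d, "\tilde\pi"] \\
Y \ar[r, "i_Y"] & B_Y
\end{tikzcd}
\end{equation*}
is Cartesian and that $\tilde\pi$ is flat. This is checked étale-locally on $Y$: after choosing compatible charts, both $B_X$ and $B_Y$ are explicit toric fibre products, and $\tilde\pi$ is locally a base change of the étale map $\af\pi$. The hypotheses on $Y$ (weak log separation, etc.) ensure that these local Cartesian squares glue. I expect this to be the main obstacle: one must manage the fact that $\pi$ need not be étale on underlying schemes (e.g.\ for a log blowup), yet $\tilde\pi$ is flat because $B_X$ is cut out from $X \times X$ by the condition of descending to the diagonal of $\af X$, rather than by the diagonal of the underlying scheme.

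With the Cartesian square in hand and $\tilde\pi$ flat, derived flat base change gives $\pi^* \circ i_{Y,*} \simeq i_{X,*} \circ \pi^*$, and combined with $\pi^* \circ i_Y^* \simeq i_X^* \circ \tilde\pi^*$ yields
\begin{equation*}
\pi^* \HHl Y (F) = \pi^* i_Y^* i_{Y,*} F \simeq i_X^* \tilde\pi^* i_{Y,*} F \simeq i_X^* i_{X,*} \pi^* F = \HHl X (\pi^* F).
\end{equation*}
The log Hochschild cohomology version follows by the dual argument with $\RHom$ in place of the tensor product. Alternatively, when $Y$ (and hence $X$) is log smooth, one gets a quick second proof via Theorem \ref{mainthm:loghkr}: since $\pi^* \lkah Y \simeq \lkah X$ for $\pi$ log étale, applying $\pi^*$ to the formality isomorphism $i_Y^* i_{Y,*}(-) \simeq (-) \otimes \Sym(\lkah Y[1])$ immediately yields the pullback compatibility.
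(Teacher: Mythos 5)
Your closing ``alternative'' is in fact the paper's entire proof, and it is correct in exactly the generality of the theorem: the hypotheses already require $X$ and $Y$ to be log smooth and weakly log separated, so Corollary \ref{cor:formalityloghochschild} applies to both, $\pi^*\lkah{Y}=\lkah{X}$ and $\pi^*\Tl{Y}=\Tl{X}$ because $\pi$ is log \'etale, and compatibility of $\pi^*$ with $\otimes$ and $\Sym$ finishes both the homology and cohomology statements. Had you led with that, the review would end here.

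The base-change route you present as the main argument has concrete problems. First, a morphism $\af{X}\to\af{Y}$ need not exist for a general log \'etale $\pi$ --- non-functoriality of Artin fans is precisely the technical obstruction this paper is organized around --- and when it does exist (e.g.\ for a log modification) it is a subdivision, which is log \'etale but \emph{not} \'etale as a map of stacks: $\Sigma\to\af{\Aff^2}$ for the blowup of Figure \ref{fig:tropicalblowupA1} is not even flat. Second, and more seriously, $\tilde\pi$ is not flat. In the toric model of Example \ref{ex:toric} one has $B_X=X\times T$ and $B_Y=Y\times T$ with $\tilde\pi=\pi\times\mathrm{id}_T$, so $\tilde\pi$ is flat if and only if $\pi$ is, and a log blowup is not flat; your heuristic that ``$B_X$ is cut out by the condition of descending to the diagonal of $\af{X}$'' does not rescue this. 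Consequently flat base change does not apply as stated. The square can still be Tor-independent (in the toric model $Y\times^h_{Y\times T}(X\times T)\simeq Y\times^h_Y X\simeq X$ because $B_X\to B_Y$ is the base change of $\pi$ along the flat projection $B_Y\to Y$), so a derived/Tor-independent base-change theorem could be substituted, but that is an additional argument you have not supplied, and the $i^!$-version for cohomology would need further justification beyond ``the dual argument.'' The formality route avoids all of this, which is why the paper uses it.
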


\begin{remark}
    A simple but important application is the case of a log blow-up. Let $\pi:(X,D_X)\to (Y, D_Y)$ be a log blow-up of snc varieties, $X$ and $Y$. Then, the theorem above implies that $\pi^*$ induces an isomorphism of log Hochschild homology groups of $(X,D_X)$ and $(Y,D_Y)$
\[R\Gamma(X,\HHl{X}(\OO_X)\simeq R\Gamma(Y,\HHl{Y}(\OO_Y)).\]
This isomorphism provides evidence that the log derived categories of log blow-ups should be equivalent.
\end{remark}

\subsubsection{Cyclic homology}

The log diagonal admits an equivalent description
\[B = X\times X \times_{\af{X\times X}}\af{X} \simeq X\times_{\af{X}}X.\]
Therefore, the logarithmic derived loop space (i.e, the derived self-intersection of $X\to B$) is equivalent to that of the diagonal map $X\to X\times_{\af{X}}X$ over $\af{X}$. This describes the logarithmic derived loop space as derived mapping space $\Map_{\dSt/\af{X}}(S^1,X)$.

The log Hochschild homology of $\OO_X$ is thereby equipped with an $S^1$-action. This allows us to define variants of \emph{cyclic homology} in the case of log schemes. The relation between log Hochschild homology and ordinary Hochschild homology coming from formality also provides a simple way to compute cyclic homology for a pair $(X, D)$ where $X$ is a smooth scheme and $D$ a simple normal crossing divisor. Defining variants of cyclic homology in the logarithmic setting has been a difficulty in the literature, however, with our geometric approach this is straightforward work.

\subsubsection{Log orbifolds}

We conclude our paper by considering log (quotient) orbifolds $[X/G]$. Let $G$ be a finite group acting on a log scheme $X$ and trivially on its Artin fan $\af{X}$. Using the formality of derived intersections \cite{grivaux2020derived, arinkin2019formality}, we show the log derived loop space of an orbifold is formal over its log inertia stack. This allows us to compute log Hochschild homology with an equivariant HKR theorem.

\begin{mainthm}[Corollary \ref{cor:logorbifolddecom}]\label{mainthm:orbifold}

For a firm action $G \action X$ (Assumption \ref{ass:actionstrata}), we have isomorphisms of $k$-vector spaces
\[R^n\Gamma(\HHl{[X/G]}(\OO_{[X/G]}))=\left(\bigoplus_{g\in G} R^n\Gamma(\HHl{X^g_{\log}}(\OO_{X^g_{\log}}))\right)^G=\]
\[=\left(\bigoplus_{g\in G} \bigoplus_{q-p=n}H^p(X^g_{\log}, \Omega^{q,\log}_{X^g_{\log}})\right)^G.\]

\end{mainthm}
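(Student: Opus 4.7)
The plan is to promote the formality/HKR package of Theorem \ref{mainthm:loghkr} to an equivariant statement over the log inertia stack of $[X/G]$ and then pass to $G$-invariants. Since $G$ acts trivially on $\af{X}$, the Artin fan of $[X/G]$ is again $\af{X}$, the diagonal square of Theorem \ref{mainthm:artinfan} is $G$-equivariant, and the log diagonal is $B_{[X/G]} \simeq [B_X/G]$ with the diagonal $G$-action on $B_X = X\times_{\af{X}} X$. Hence the log derived loop space of $[X/G]$ is the derived self-intersection of the strict map $i:[X/G]\to [B_X/G]$, computed $G$-equivariantly over $\af{X}$.

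Next I would identify the components of the log inertia. The firmness assumption (Assumption \ref{ass:actionstrata}) is designed precisely so that for each $g\in G$, the logarithmic fixed locus $X^g_{\log}$ is a log smooth closed sub-log-scheme of $X$, and
\[I^{\log}[X/G] = \bigsqcup_{[g]}\bigl[X^g_{\log}/C_G(g)\bigr].\]
Restricting the derived self-intersection of $i$ to the component indexed by $g$ gives the log derived loop space of $X^g_{\log}$. The key technical input is then equivariant formality of these derived intersections, which follows from the general formality results of \cite{grivaux2020derived, arinkin2019formality}: firmness ensures that the log conormal of $X^g_{\log}\subset X$ splits off $G$-equivariantly, and the remaining piece is canonically the log cotangent bundle of $X^g_{\log}$.

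Assembling these ingredients yields a $G$-equivariant splitting
\[\HHl{[X/G]}(\OO_{[X/G]}) \longsimeq \bigoplus_{g\in G}\iota_{g*}\Sym\bigl(\Omega^{1,\log}_{X^g_{\log}}[1]\bigr),\]
where $\iota_g:X^g_{\log}\to X$ denotes the inclusion. Applying the logarithmic HKR isomorphism of Theorem \ref{mainthm:loghkr} component-by-component and then taking $G$-invariants on derived global sections gives both isomorphisms in the statement; the first equality is the equivariant decomposition of log HH, and the second is termwise log HKR on each $X^g_{\log}$. The main obstacle will be verifying the hypotheses of the equivariant formality results in the logarithmic setting: one must show that firmness implies the $G$-equivariant splitting of the log conormal along each $X^g_{\log}$ and that the relevant Atiyah-class style obstruction vanishes $G$-equivariantly. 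Once this combinatorial behavior of fixed loci is captured correctly by the firmness assumption, the rest follows from formality and the non-equivariant Theorem \ref{mainthm:loghkr}.
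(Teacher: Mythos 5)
Your proposal follows essentially the same route as the paper: decompose the log derived loop space of $[X/G]$ into twisted sectors, apply formality of derived intersections from \cite{arinkin2019formality,grivaux2020derived} sector by sector, identify the excess bundle with $\lkah{X^g_{\log}}$, and conclude by termwise log HKR plus $G$-invariants. One step, however, is wrong as written and is load-bearing: the log diagonal target for $[X/G]$ is \emph{not} $[B_X/G]$ with the diagonal $G$-action, but $[B_X/(G\times G)]$, since $[X/G]\times_{\af{X}}[X/G]\simeq[(X\times_{\af{X}}X)/(G\times G)]$ (this is the content of Lemmas \ref{lem:actiononB} and \ref{lem:logdiagorbifold}). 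The distinction is not cosmetic: the self-intersection of $[X/G]\to[B_X/G]$ along the diagonal action would only see the untwisted sector $X\times^h_{B}X$ up to a $G$-quotient, whereas the $G\times G$-quotient --- combined with the change of presentation $[X/G]\simeq[X\times G/(G\times G)]$ --- is precisely what makes the diagonal break up into the twisted diagonals $i_g:X\to B$ and yields $h^*h_*\simeq\bigoplus_{g\in G}i_g^*i_*$. Your subsequent decomposition of $I^{\log}[X/G]$ into $\bigsqcup_{[g]}\bigl[X^g_{\log}/C_G(g)\bigr]$ is the correct one, but it comes from the $G\times G$ picture, not from $[B_X/G]$; as written your setup is internally inconsistent. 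A smaller correction: the splitting of the excess-bundle sequence $0\to N_{X^g_{\log}/X}\to N_{X/B}|_{X^g_{\log}}\to E\to 0$ needed for Theorem \ref{thm:formalityderived} comes from averaging over $\langle g\rangle$ (using that $\#G$ is invertible in $k$), not from firmness; firmness is what makes the twisted diagonals factor through $B$ and identifies $E^\vee$ with the $g$-coinvariants $(\lkah{X})_g\simeq\lkah{X^g_{\log}}$ in the first place.
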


\subsection{Other versions of log Hochschild co/homology}

There are two distinct notions of log Hochschild co/homology, in analogy with the two versions of the cotangent complex due to Gabber and Olsson \cite{logcotangent}. The difference comes from two separate ways of deriving a log structure. 

Olsson created a moduli space $\Log$ of log structures \cite{logstacks}. Any log scheme $X$ has a unique map $X \to \Log$ parameterizing its log structure. Olsson's log cotangent complex is the relative cotangent complex of the map $X \to \Log$ \cite{logcotangent}:
\[\lccx{X} = \ccx{X/\Log}.\]
See Section \ref{s:logbasics} for more. 

Gabber instead replaces the monoid $M_X$ by a simplicial monoid \cite[\S 8]{logcotangent}. The Gabber log cotangent complex comes from resolving the ring $\OO_X$ and the monoid $M_X$ simultaneously. Remarkably, the two log cotangent complexes coincide up to degree $-3$ \cite[Theorem 8.32]{logcotangent}.

A simplicial monoid may interpreted as an $E_\infty$ space $\tilde M$, the ring as an $E_\infty$ ring spectrum $\tilde R$. But the usual definition of log structure does not make sense for a map $\epsilon : \tilde M \to \tilde R$. Neither do the notions of ``fine'' or ``saturated.''

For spectral analogues of ``f.s. log structures'', J. Rognes suggested the notion of \emph{repletion} \cite{rognesoriginalloghh}. See also \cite{rognes2014}, \cite{rognes2015}. Log Hochschild may be defined using the replete diagonal by \cite[Definition 5.3]{HKRlogHH}. See loc. cit. for more on Rognes' log Hochschild.

Repletion is the right notion for Rognes' spectral log structures. However, these do not have the same combinatorial nature or modular interpretation. For ordinary log schemes, such as smooth schemes with s.n.c. divisors, toroidal singularities, etc., the Olsson approach taken here has advantages. For example, log schemes have a \textit{tropicalization} known as an Artin fan that encodes the combinatorics of how its strata meet.

We define a Hochschild co/homology for log schemes in the spirit of Olsson's log cotangent complex and tropicalization, using his stack $\Log$. Olsson defined one by a concrete chain complex using descent theory for sheaves on Artin stacks in forthcoming work \cite{olssondraftloghochschild}.

The log Hochschild co/homology defined in this article coincides with that of M. Olsson by Proposition \ref{prop:olssonvsus}. We rediscovered it by working only with the tropicalizations of log schemes. We suggest the moniker ``Olsson log Hochschild co/homology'' for the one here and in \cite{olssondraftloghochschild}, as opposed to Rognes' version.

\begin{question}[{\cite[\S 1.14]{HKRlogHH}}]

The two log cotangent complexes agree up to degree $-3$. Can the same techniques equate the two versions of log Hochschild co/homology in low degrees? 
    
\end{question}

Using our main theorem, D.~Park sketched an argument for us answering this question in the affirmative when the two log cotangent complexes coincide.


\subsection*{Conventions}

We assume the reader is familiar with log geometry at the introductory level \cite{kato1989logarithmic}, \cite{ogusloggeom}, \cite{rendimentodeicontiwisepandharipandeherrmymolcho}, \cite{loggeomintrohandbook}. We use more involved log techniques from \cite{logstacks} in Section \ref{s:AFdiagonal}. 

Our functors are always derived where appropriate. For example, $i_*$ means the derived functor $Ri_*$: we drop the $R$ or $L$. In particular, ``$\Sym$'' refers to the derived dg symmetric algebra functor. It is the quotient of the tensor algebra by the relation $x \otimes y - y \otimes x$ for elements of degree 1. The induced relation for higher-degree terms is $ x \otimes  y + (-1)^{\deg  x \deg  y} y \otimes x$. For a complex $E[-1]$ concentrated in degree 1 in characteristic 0, the symmetric algebra is isomorphic to the complex $\bigoplus_n \bigwedge^n E$ with the 0-differentials.

Write $\Aff_P \coloneqq \Spec \ZZ[P]$ and $\af{P}$ for the quotient stack
\[\af{P} \coloneqq \bra{\Spec \ZZ[P]/\Spec \ZZ[\gp P]} = \bra{\Aff_P/\Aff_{\gp{P}}}\]
of the affine toric variety $\Aff_P$ by its dense torus. It admits a log structure by fppf descent from the canonical log structure on $\Aff_P$. Abbreviate $\af{} = \af{\NN}$. 

The f.s. fiber product differs from the fiber product of schemes. We write $\ulcorner \ell$ or $X \times_S^\ell Y$ for the f.s. fiber product and $\ulcorner\msout{\ell}$ or $X \times_S^{\msout{\ell}} Y$ for when they happen to coincide. This is just a mnemonic for the reader, and we also state clearly in which category a fiber product is to be taken. 

We say a map $X \to Y$ is a \emph{log alteration} if, strict étale locally in $Y$, it is pulled back from a map $\scr B \to \scr C$ of Artin fans which is of DM type, proper, and birational. These are locally pulled back from a morphism of toric varieties consisting of subdividing the fan and rescaling the lattice. Log alterations are further
\begin{itemize}
    \item \emph{Log modifications} if they are representable. This entails a subdivision of the toric fans without rescaling the lattice. 
    \item \emph{Log blowups} if they are representable and projective,
    \item \emph{Root stacks} if they are integral, which means only rescaling is allowed and no subdivisions. 
    
\end{itemize}

By \textit{tropicalization} of a log scheme, we refer to its Artin fan. See \cite{functorialtropicalizationulirsch} for other notions and some comparison. 

From Section \ref{sec:loghoch} on, we work over a sufficiently large field $k$, not necessarily algebraically closed. Its characteristic must be zero or sufficiently large compared to the dimension $\dim X$. 


We work with log algebraic stacks $(X,M_X)$ with f.s. log structure $M_X$ and Artin fan $X \to \af{X}$ \cite[Proposition 3.2.1]{wisebounded}. We assume every algebraic stack is locally of finite type, hence locally noetherian. We write $\Log$ for M. Olsson's stack of f.s. log structures $\scr T or$ \cite{logstacks}.

\subsection*{Acknowledgments}

Jonathan Wise inspired the techniques of \S \ref{s:AFdiagonal} in another context. Martin Olsson shared an early draft of \cite{olssondraftloghochschild} in which he first defines log Hochschild co/homology. We thank Pieter Belmans, David Holmes, Y.P. Lee and Cris Negron for helpful conversations. Finally, HHL dedicate HHL to LCI.

\section{Log Basics}\label{s:logbasics}

We ask experts to skip this section. Details and introductions may be found in \cite{kato1989logarithmic}, \cite{ogusloggeom}, \cite{rendimentodeicontiwisepandharipandeherrmymolcho}, \cite[\S 1 Logarithmic geometry and moduli]{loggeomintrohandbook}. 

Let $\Log$ be the stack of log structures. A map from an ordinary scheme to $\Log$ is the same as a log structure on $T$:
\[\Log(T) \coloneqq \left\{M_T \text{ log structure on } T\right\}.\]
There is a relative version as well. Let $Y$ be a log scheme. Define $\Log Y$ as log structures together with a morphism to $Y$:
\[\Log Y (T) \coloneqq \left\{M_T \text{ log structure on } T  + (T, M_T) \to (Y, M_Y)\right\}.\]
The stacks $\Log$ and $\Log Y$ have universal log structures, which pull back to the $M_T$ along the corresponding map. That is, the map $T \to \Log Y$ parameterizing $M_T$ is \emph{strict}. 

\begin{grule}\label{rule:logblank}
Say a map $X \to Y$ of log schemes is log flat, log smooth, log étale, log $\cdots$ if the corresponding map $X \to \Log Y$ is flat, smooth, étale, $\cdots$. 
\end{grule}

The log cotangent complex and normal cone are defined the same way:
\[\lccx{X/Y} = \ccx{X/\Log Y}, \qquad \Cl{X/Y} = C_{X/\Log Y}.\]
Any map of log schemes $X\to Y$ factories through $\Log Y$ by definition. The important thing about the factorization
\[X \overset{i}{\longrightarrow} \Log Y \overset{g}{\longrightarrow} Y\]
is that $i$ is strict and $g$ is log étale. Given another factorization
\[X \to B \to Y\]
into strict and log étale maps, log constructions for $X/Y$ are similarly equated with the ordinary ones for $X/B$. 

We consider schemes $X$ as log schemes, endowed with their natural initial log structure $M_X = \OO_X^*$. We use $X^\circ$ to denote the underlying scheme of a log scheme. 

Every toric variety has a natural log structure from its toric divisors. For example, $\Aff^1$ has the origin, $\Aff^2$ the two axes, and $\Aff^n$ the $n$ hyperplanes. All our log schemes are f.s., so they locally admit strict maps to affine toric varieties 
\[\exists_{loc} f : X \to \Aff_P, \qquad f^*M_{\Aff_P} = M_X.\]

One can obtain a global chart if one takes Artin fans instead of affine toric varieties, gluing together all the local charts:
\[X \to \af{X}. \]
We assume our log schemes and log algebraic stacks $X$ are locally of finite type, so they all admit Artin fans \cite{wisebounded}. The Cantor set with constant log structure does not admit an Artin fan in the usual sense.

A morphism of log schemes $f : X \to Y$ is a morphism of schemes together with a map on log structures $M_Y|_X \to M_X$. A technical problem in log geometry is whether such a map $f$ is compatible with a morphism of Artin fans
\[
\begin{tikzcd}
X \ar[r] \ar[d]       &Y \ar[d]      \\
\af{X} \ar[r, dashed, "\exists?"]      &\af{Y}.
\end{tikzcd}
\]
Locally on $X$ and $Y$, this is always possible. If $f$ is integral or exact, it is possible \cite{rendimentodeicontiwisepandharipandeherrmymolcho}. However, there are examples of $f : X \to Y$ where no such map $\af{X} \dashrightarrow \af{Y}$ exists \cite[5.4.1]{skeletonsfansabramchenmarcusulrischwise}.

Another central technical problem is that the fiber product in f.s. log schemes differs from the fiber product in schemes. There is always a map
\begin{equation}\label{eqn:fspb}
\begin{tikzcd}
X \times^\ell_Z Y \ar[dr] \ar[r, "j"] \ar[rr, bend left=15]       &X \times_Z^{sch} Y \ar[r] \ar[d] \pb         &X \ar[d]      \\
        &Y \ar[r]      &Z
\end{tikzcd}  
\end{equation}
from the f.s. fiber product $X \times^\ell_Z Y$ to the fiber product in schemes $X \times^{sch}_Z Y$. The map is saturation, which is a finite surjection onto a closed immersion. We use the notation $\ulcorner, \times$ for ordinary pullbacks and $\ulcorner \ell, \times^\ell$ for f.s. pullbacks. We may write $\ulcorner \msout{\ell}$ when they coincide.

Here is an example where the map $j$ is a closed immersion:

\begin{example}

Let $X = Bl_{\vec 0} \Aff^2$ be the blowup of $\Aff^2$ at the origin. Give $\Aff^2$ the toric divisorial log structure from the $x-$ and $y-$axes and $X$ that from the strict transforms of the axes \emph{and} the exceptional divisor $E$.  

The ``log blowup'' $X \to \Aff^2$ is a log étale proper monomorphism. Consider the strict pullback
\[
\begin{tikzcd}
E \ar[r] \ar[d] \lpbstrict       &X \ar[d]      \\
\Vec{0} \ar[r]       &\Aff^2.
\end{tikzcd}
\]
Here, the origin $\Vec{0}$ has rank-two log structure. The morphism $E \to \Vec{0}$ is logarithmically a monomorphism, even though it very much is not on underlying schemes! The f.s. and schematic pullbacks are therefore
\[E \times_{\Vec{0}}^\ell E = E, \qquad E \times^{sch}_{\Vec{0}} E = \PP^1 \times \PP^1.\]
The map between them is the diagonal. 

\end{example}

And now for an example where the map $j$ is a finite surjection:

\begin{example}

Consider the map 
\[\bra{r} : \Aff^1 \to \Aff^1 \qquad t \mapsto t^r.\]
What is the f.s. pullback of two copies of this map?
\[
\begin{tikzcd}
? \ar[r] \ar[d] \lpb       &\Aff^1 \ar[d, "\bra{r}"]         \\
\Aff^1 \ar[r, "\bra{r}", swap]       &\Aff^1.
\end{tikzcd}
\]

The ordinary pullback is the scheme
\[X = \Spec \ZZ[t, u]/(t^r = u^r).\]
Over $\CC$, this is the union of $r$ lines $t = \omega^i u$, where $\omega$ is a primitive $r$th root of unity. They meet at the origin. This non-normal scheme is a degree $r$ ramified cover along each projection to $\Aff^1$. Its log structure is generated by elements $T, U$ mapping to $t, u$ modulo the relation $r T = r U$. This is not saturated. 

The saturation of $X$ is the normalization, namely $r$ disjoint lines
\[X^{sat} = \Aff^1 \sqcup \cdots \sqcup \Aff^1.\]

\end{example}

Saturation is not always the normalization of $X$, but rather that of its Artin fan $\af{X}$. Making a scheme integral gives a closed immersion cut out by monomial equations.

If $X \to S$ is a morphism, there may also be a relative Artin fan $\af{X/S}$. It is the initial factorization 
\[X \to \scr B \to \Log S\]
of the map $X \to \Log S$ where $\scr B \to \Log S$ is étale and representable. It is a \emph{family of cone stacks} over $S$ \cite{rendimentodeicontiwisepandharipandeherrmymolcho}, a space locally constructed from pullbacks
\[
\begin{tikzcd}
S \times_{\af{Q}} \af{P} \ar[r] \ar[d] \lpbstrict        &\af{P} \ar[d]         \\
S \ar[r]       &\af{Q}.
\end{tikzcd}    
\]
Relative Artin fans exist for log flat morphisms of finite presentation with ``log reduced geometric fibers'' \S \ref{ss:loggeomfibers}. 

\begin{warning}

This differs from the Artin fan $\scr A_{X/Y}$ \textit{of} a morphism $X \to Y$ \cite[Proposition 3.3.2]{wisebounded}. Their notion is the relative Artin fan $\scr A_{X/Y} = \af{X/\af{Y}}$ of the map $X \to \af{Y}$. We do not use this special case. 
    
\end{warning}

\begin{example}\label{ex:AFlsmpi0}
    
If $X \to S$ is log smooth and quasicompact, then $\af{X/S}$ coincides with the space of connected components
\[\af{X/S} = \pi_0(X/\Log S)\]
of \cite[Construction 6.8]{laumonmoretbaillystacks}, \cite{romagnypi0}. The map $X \to \af{X/S}$ is surjective, with geometrically connected fibers. 

\end{example}

We provide a more general criterion for when the map $X \to \af{X/S}$ has geometrically connected fibers in Proposition \ref{prop:AFconnfibers}.

\begin{remark}

If a map $X \to \af{}$ is flat, the pullback $D \coloneqq B \GG_m \times_{\af{}} X$ of the closed point of $\af{}$ is an effective Cartier divisor
\[
\begin{tikzcd}
D \ar[r] \ar[d] \pb       &B\GG_m \ar[d]         \\
X \ar[r, "\Psi"]      &\af{}.
\end{tikzcd}    
\]
Likewise every effective Cartier divisor $D \subseteq X$ fits in a unique such pullback square. 

Suppose we are over a field $k$ and $X$ is reduced. Then effective Cartier divisors always come from flat maps $\Psi$ by miracle flatness \cite[Proposition III.9.7]{hartshorne}. Otherwise we can have non-flat maps such as
\[\ZZ[t] \to \ZZ; \qquad t \mapsto 2\]
cutting out a Cartier divisor.

\end{remark}

\begin{example}

Let $D \subseteq X$ be an effective Cartier divisor and suppose both $X, D$ are smooth and geometrically connected. The map $\Psi : X \to \af{}$ parameterizing $D \subseteq X$ is the Artin fan of $X$ with divisorial log structure from $D$. We prove this in Example \ref{ex:AFsmdivisor}.

\end{example}

\section{Artin fan of the diagonal}\label{s:AFdiagonal}

The goal of this section is to prove our main technical result. 

\begin{theorem}\label{thm:AFdiagonal}

Consider a pair of log schemes or log algebraic stacks $X, Y$ over a base $S$ that admit relative Artin fans over $S$. Suppose the maps $X, Y \to S$ are quasicompact, log flat, and have log reduced geometric fibers (Definition \ref{defn:logreducedfibers}). 

Then the Artin fan of the fiber product $X \times_S^\ell Y$ is the fiber product of the Artin fans
\[\af{X \times_S^\ell Y/S} \longsimeq \af{X/S} \times_S^\ell \af{Y/S}. \]
This is an isomorphism of factorizations of $X \times_S^\ell Y \to \Log S$.

\end{theorem}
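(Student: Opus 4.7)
The plan is to identify
\[ \scr C \,:=\, \af{X/S} \times_{\Log S}^{\ell} \af{Y/S} \]
with $\af{(X \times_S^{\ell} Y)/S}$, and to verify this via a geometric-fiber analysis. Since each factor is \'etale and representable over $\Log S$, so is $\scr C$. Composing the projections of $X \times_S^{\ell} Y$ with the Artin fan maps produces a canonical morphism $\phi : X \times_S^{\ell} Y \to \scr C$ factoring the structure map to $\Log S$. By the universal property of $\af{(X \times_S^{\ell} Y)/S}$ as the initial \'etale-representable factorization of $X \times_S^{\ell} Y \to \Log S$, there is a unique comparison $\psi : \af{(X \times_S^{\ell} Y)/S} \to \scr C$, and the task reduces to showing $\psi$ is an isomorphism.

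Both the source and target of $\psi$ are \'etale and representable over $\Log S$, so under the stated hypotheses Example~\ref{ex:AFlsmpi0} and its refinement Proposition~\ref{prop:AFconnfibers} identify each with the sheaf $\pi_0(-/\Log S)$ of geometric connected components of the corresponding map. Therefore $\psi$ is an isomorphism if and only if $\phi$ is surjective with geometrically connected fibers. Surjectivity is immediate from surjectivity of $X \to \af{X/S}$ and $Y \to \af{Y/S}$ together with the existence of geometric lifts of $\scr C$-points. The substance of the proof is the geometric connectedness of the fibers of $\phi$.

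The main obstacle is that fs fiber products of geometrically connected log schemes can fail to be connected, as already in the toric example $\bra{r} \times_{\af{}}^{\ell} \bra{r}$, whose saturation splits into $r$ disjoint lines. I would fix a geometric point $(a,b) \in \scr C$ over $\bar s \in \Log S$ and express the fiber of $\phi$ over $(a,b)$ as the fs product $X_a \times_{\bar s}^{\ell} Y_b$, where $X_a, Y_b$ are the fs fibers of $X \to \af{X/S}$ and $Y \to \af{Y/S}$; each is geometrically connected by Proposition~\ref{prop:AFconnfibers}. Strict \'etale locally in $\bar s$, these fs fibers admit charts by sharp fs monoids $P, Q$ over a chart $N$ of $\bar s$, so that log flatness reduces the computation of $X_a \times_{\bar s}^{\ell} Y_b$ to a base change from the toric stack $\af{(P \oplus_N Q)^{\mathrm{sat}}}$. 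Here the log reduced geometric fibers hypothesis is what prevents the saturation from splitting connected components; I expect this to be the key combinatorial input, ensuring that the monoid amalgamation does not develop torsion obstructing connectedness. Combining surjectivity with connected fibers yields that $\psi$ is an isomorphism, and the remaining identification with $\af{X/S} \times_S^{\ell} \af{Y/S}$ follows because both Artin fan projections are strict over $\Log S$, so the fs product over $\Log S$ agrees with the fs product over $S$ in the statement.
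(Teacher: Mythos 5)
Your overall architecture --- produce a comparison map out of the universal property of the relative Artin fan, then reduce to showing that $W := X\times_S^\ell Y \to \scr C$ is surjective with geometrically connected fibers and conclude via the $\pi_0(-/\Log S)$ identification --- is exactly the paper's strategy. However, there are two genuine errors in the execution. First, your comparison object $\scr C = \af{X/S}\times^\ell_{\Log S}\af{Y/S}$ is the wrong one, and your closing claim that the fs product over $\Log S$ agrees with the fs product over $S$ because both factors are strict over $\Log S$ is false. Strictness over $\Log S$ means each map $\af{X/S}\to\Log S$, $\af{Y/S}\to\Log S$ classifies its \emph{own} log structure; the fiber product over $\Log S$ forces these two log structures to be identified, while the fiber product over $S$ amalgamates them. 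Already for $X=Y=\Aff^1$ with the origin over $S=\pt$ one has $\af{X}\times\af{Y}=\af{}^2=\af{\Aff^2}$, which is not $\af{}\times_{\Log}\af{}$. Worse, the map $\phi: W\to \af{X/S}\times_{\Log S}\af{Y/S}$ you invoke does not exist: the two composites $W\to\Log S$ classify the distinct log structures $M_X|_W$ and $M_Y|_W$. The correct target is $\af{X/S}\times^\ell_S\af{Y/S}$, which receives $W$ because $W$ is (strictly) the fiber product of $X'=X\times_{\af{X/S}}\scr C$ and $Y'=\scr C\times_{\af{Y/S}}Y$ over $\scr C$.

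Second, your identification of the fiber of $\phi$ over a geometric point $(a,b)$ with the fs fiber product $X_a\times^\ell_{\bar s}Y_b$ is incorrect, and this is where your argument would fail: as you yourself note, such fs products of connected log schemes can disconnect, and the hoped-for rescue by ``log reduced geometric fibers preventing torsion in the monoid amalgamation'' is neither the actual role of that hypothesis nor something you establish. The point you are missing is that all the saturation has already been absorbed into $\scr C$ itself: since $X\to\af{X/S}$ and $Y\to\af{Y/S}$ are strict, $W\to\scr C$ is a \emph{strict} fiber product of $X'\to\scr C$ and $Y'\to\scr C$, so its fiber over $(a,b)$ is the ordinary product of schemes $X_a\times_\kappa Y_b$ over the algebraically closed residue field, which is connected and nonempty once each factor is. (The paper packages this as the composite $W\xrightarrow{f}Y'\xrightarrow{g}\scr C$ with $f$ open, surjective, with geometrically connected fibers and $g$ with geometrically connected fibers, citing \cite[0387]{sta}.) The hypothesis of log reduced geometric fibers is instead what feeds Proposition \ref{prop:AFconnfibers} --- guaranteeing that $X\to\af{X/S}$, $Y\to\af{Y/S}$, and $W\to\af{W/S}$ each have connected nonempty geometric fibers and that the Artin fans coincide with $\pi_0(-/\Log S)$ --- and one also needs Lemma \ref{lem:logreducedfibers} to know that $W\to S$ inherits these hypotheses. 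With those two corrections your proof closes up and coincides with the paper's.
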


For the proof, it is enough that the fiber product $X \times_S^\ell Y \to \af{X \times_S^\ell Y/S}$ and one of $X \to \af{X/S}, Y \to \af{Y/S},$ have connected, nonempty geometric fibers. We show that holds under our assumptions in Proposition \ref{prop:AFconnfibers}. 

It suffices to take $X, Y \to S$ quasicompact and log smooth. We will only use the absolute special case for simplicity.

\begin{corollary}\label{cor:AFdiagonalabsolute}

If $X, Y$ are log smooth and quasicompact, the Artin fan of their product is the product of the Artin fans
\[\af{X \times Y} \longsimeq \af{X} \times \af{Y}. \]
    
\end{corollary}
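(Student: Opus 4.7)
The plan is to obtain Corollary~\ref{cor:AFdiagonalabsolute} as the absolute specialization of Theorem~\ref{thm:AFdiagonal}, taking $S = \Spec \ZZ$ (or any base ring the reader prefers) endowed with the trivial log structure. Under this choice, $\Log S \simeq \Log$, so the relative Artin fan $\af{Z/S}$ agrees with the absolute $\af{Z}$ for any log scheme $Z$, and the f.s.\ fiber product $\cdot \times_S^\ell \cdot$ over such a base is nothing but the product $X \times Y$ in f.s.\ log schemes. Thus, once the hypotheses of the theorem are met, its conclusion translates verbatim into the statement of the corollary.

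Next I would verify the three hypotheses of Theorem~\ref{thm:AFdiagonal} for $X, Y \to S$. Quasicompactness is exactly what is assumed. Log flatness follows from log smoothness. To produce relative Artin fans, I would invoke Example~\ref{ex:AFlsmpi0}, which shows that a quasicompact log smooth morphism admits a relative Artin fan, realized as the $\pi_0$ of $X/\Log S$; over a trivial base this is precisely $\af{X}$, and likewise for $Y$.

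The one remaining condition is that the geometric fibers be log reduced (Definition~\ref{defn:logreducedfibers}). For log smooth $X$, the geometric fibers over $\Spec \ZZ$ are themselves log smooth over an algebraically closed field; étale locally they admit strict smooth charts to toric varieties, so their underlying schemes are reduced and the log structure is f.s., which should be exactly what is needed for the definition to hold.

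With the hypotheses in place, Theorem~\ref{thm:AFdiagonal} yields
\[\af{X \times_S^\ell Y / S} \;\longsimeq\; \af{X/S} \times_S^\ell \af{Y/S},\]
and under the identifications above this reads $\af{X \times Y} \simeq \af{X} \times \af{Y}$. The only real point of friction I anticipate is bookkeeping: confirming that the identification of absolute with relative Artin fans is canonical as factorizations of $X \times Y \to \Log$, and that the f.s.\ product over the trivial base really does agree with the log-scheme product on the nose (rather than just up to some saturation step). Both are essentially formal from the definitions, so the bulk of the work is genuinely in the theorem, and the corollary is a clean specialization.
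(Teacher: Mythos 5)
Your proposal is correct and is exactly the paper's (implicit) argument: the corollary is stated as the absolute special case of Theorem~\ref{thm:AFdiagonal} over a base with trivial log structure, with the hypotheses supplied by quasicompactness and log smoothness. One small streamlining: the condition of log reduced geometric fibers (Definition~\ref{defn:logreducedfibers}) concerns the fibers of $X \to \Log S$ rather than of $X \to S$, so instead of arguing that the fibers over $\Spec \ZZ$ are toroidal and hence reduced, it is cleaner to note that log smoothness means $X \to \Log S$ is smooth (Rule~\ref{rule:logblank}) and smooth morphisms have reduced geometric fibers --- which is precisely the remark the paper makes after Proposition~\ref{prop:AFconnfibers}.
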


The theorem concerns a fiber product. The product of two maps $X \to S, Y \to T$ is simpler. 

\begin{proposition}\label{prop:AFproduct}

Let $f : X \to S$, $g : Y \to T$ be a maps of log algebraic stacks admitting relative Artin fans. Suppose $f, g$ are quasicompact, log flat, and have log reduced geometric fibers. 

The product of the Artin fans is the Artin fan of the product:
\[\af{X \times Y /S \times T} \longsimeq \af{X/S} \times \af{Y/T}. \]
This is an isomorphism of factorizations of
\[X \times Y \to \Log S \times \Log T.\]
    
\end{proposition}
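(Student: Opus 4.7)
The plan is to identify $\af{X/S} \times \af{Y/T}$ with $\af{X \times Y / S \times T}$ via the universal property defining a relative Artin fan. Since the relative Artin fan is the initial factorization of the map $X \times Y \to \Log S \times \Log T$ through an étale, representable morphism, I must check two things: (i) the candidate $\af{X/S} \times \af{Y/T} \to \Log S \times \Log T$ is étale and representable; and (ii) the resulting factorization $X \times Y \to \af{X/S} \times \af{Y/T} \to \Log S \times \Log T$ is initial among such.

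For (i), the candidate map is the product of the two maps $\af{X/S} \to \Log S$ and $\af{Y/T} \to \Log T$, each étale and representable by the definition of the relative Artin fan; both properties are stable under products in the 2-category of algebraic stacks, so (i) is immediate. For (ii), I invoke the $\pi_0$-characterization of the relative Artin fan from Example~\ref{ex:AFlsmpi0} and its promised extension in Proposition~\ref{prop:AFconnfibers}: initiality follows once we show that $X \times Y \to \af{X/S} \times \af{Y/T}$ has nonempty, geometrically connected fibers. Above a geometric point $(\bar a, \bar b)$, the fiber of this product map is the product $X_{\bar a} \times Y_{\bar b}$ of the corresponding geometric fibers of $X \to \af{X/S}$ and $Y \to \af{Y/T}$. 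By our hypotheses (log flat, log reduced geometric fibers) together with Proposition~\ref{prop:AFconnfibers}, each of $X_{\bar a}$ and $Y_{\bar b}$ is nonempty and geometrically connected, and the product of two such schemes over an algebraically closed field is again nonempty and geometrically connected, which completes the verification.

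The main obstacle is the connectedness input supplied by Proposition~\ref{prop:AFconnfibers}; without an extension of the $\pi_0$-description of Example~\ref{ex:AFlsmpi0} to the log flat setting with log reduced geometric fibers, the argument would not close. Once that is in place, the product step reduces to an elementary fact about geometrically connected schemes. Compared with the harder Theorem~\ref{thm:AFdiagonal}, the absence of a fibered structure means we avoid the subtleties of f.s.\ fiber products and saturation entirely. Finally, matching the two factorizations as factorizations over $\Log S \times \Log T$ is automatic because the candidate $X \times Y \to \af{X/S} \times \af{Y/T} \to \Log S \times \Log T$ is literally the product of $X \to \af{X/S} \to \Log S$ with $Y \to \af{Y/T} \to \Log T$.
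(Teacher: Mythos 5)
Your proposal is correct and follows essentially the same route as the paper: the paper's proof obtains the comparison map from the universal property of $\af{X\times Y/S\times T}$ over the \'etale representable $\af{X/S}\times\af{Y/T}\to \Log S\times\Log T$, and then concludes via Proposition~\ref{prop:AFconnfibers} and Proposition~\ref{prop:AFisoconnfibers}, exactly the connectedness-of-fibers mechanism you invoke. The only (harmless) divergence is that you verify connectedness of the fibers of $X\times Y\to \af{X/S}\times\af{Y/T}$ directly, as a product $X_{\bar a}\times Y_{\bar b}$ of nonempty geometrically connected schemes over an algebraically closed field, whereas the paper defers to the slightly more elaborate open-map-composite argument from the proof of Theorem~\ref{thm:AFdiagonal}; your version is the more elementary one and is available precisely because no fiber product (and hence no saturation) intervenes here.
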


\subsection{Geometric fibers in log geometry}\label{ss:loggeomfibers}

We repeatedly reuse a lemma that lets us compare geometric fibers. We consider when a map $X \to Y$ of algebraic stacks has \emph{reduced} or \emph{connected} geometric fibers. 

The property ``is reduced'' is smooth-local, so a map $X \to S$ of algebraic stacks has \emph{reduced geometric fibers} if there is a square
\[
\begin{tikzcd}
U \ar[r] \ar[d]       &V \ar[d]      \\
X \ar[r]       &S
\end{tikzcd}
\]
with $V \to S$, $U \to X \times_S V$ smooth surjections from schemes such that $U \to V$ has reduced fibers. 

Connectedness of an algebraic stack $X$ means that of its underlying topological space $|X|$ of equivalence classes of points \cite[04XG]{sta}.

\begin{lemma}\label{lem:geomfibers}

Let $X \overset{f}{\to} Y \overset{g}{\to} Z$ be maps of algebraic stacks. Suppose $g : Y \to Z$ is étale and let $v \to Z$ be a geometric point. Ranging over lifts $v \dashrightarrow Y$ of the point to $Y$, to geometric fibers of $f$ form an étale cover of those of $g \circ f$
\begin{equation}\label{eqn:geomfibers}
\bigsqcup_{v \dashrightarrow Y} X \times_Y v \to X \times_Z v.   
\end{equation}
If $f$ has reduced geometric fibers, so does $g \circ f$. 

If $g$ is surjective and representable by algebraic spaces, we have a converse. The map \eqref{eqn:geomfibers} is the identity. If $g \circ f$ has reduced or connected geometric fibers, the same goes for $f$. 
    
\end{lemma}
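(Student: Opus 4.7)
The plan is to first describe the fiber $Y_v := Y \times_Z v$. Since $g$ is étale and $v$ is a geometric point, $Y_v$ is étale over the separably closed field $\kappa(v)$. Up to finite gerbe structure at each point, $Y_v$ is a ``disjoint union of its geometric points,'' which are precisely the lifts $v \dashrightarrow Y$ of $v \to Z$ taken up to isomorphism. Concretely, picking one representative per isomorphism class of lift yields an étale surjection $\bigsqcup_{v \dashrightarrow Y} v \twoheadrightarrow Y_v$, which becomes an honest isomorphism when $g$ is representable by algebraic spaces (since then $Y_v$ is an étale $v$-algebraic space, hence literally the disjoint union of its points). Base change along $f$, together with $X \times_Z v = X \times_Y Y_v$, then produces \eqref{eqn:geomfibers} as an étale surjection in general, and as an isomorphism under the representability assumption. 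This is the ``identity'' statement.

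The reducedness implication now follows formally: reducedness is smooth-local, hence étale-local, for algebraic stacks. By hypothesis each $X \times_Y v$ in the source of \eqref{eqn:geomfibers} is reduced, so their disjoint union is reduced, and reducedness descends along the étale surjection \eqref{eqn:geomfibers} to give reducedness of $X \times_Z v$.

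For the converse, assume $g$ is surjective and representable by algebraic spaces, so \eqref{eqn:geomfibers} is an isomorphism. Surjectivity of $g$ ensures that every geometric point $w$ of $Y$ arises as a lift of $v := g(w)$, and hence $X \times_Y w$ appears as a literal disjoint summand of $X \times_Z v$. If $X \times_Z v$ is reduced, then each summand is reduced, so $f$ has reduced geometric fibers. For connectedness, a nonempty connected space cannot decompose nontrivially as a disjoint union, so at most one summand of $X \times_Z v$ is nonempty; that summand must then equal $X \times_Z v$ and so inherit its connectedness, while the other $X \times_Y w$ are empty (which the usual convention on geometric fibers permits).

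The step that requires the most care, and which I expect to be the main obstacle, is claim 1 in the non-representable setting: when $g$ is not representable the fiber $Y_v$ can carry nontrivial gerbe structure at its points, so the ``disjoint union of lifts'' is only an étale cover, not an isomorphism. To make this precise I would pass to a smooth atlas of $Y$, reducing to the case of algebraic spaces where $Y_v$ is literally a disjoint union; the reducedness statement then follows by standard smooth/étale descent, and the set-theoretic argument for the converse becomes transparent.
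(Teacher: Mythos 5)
Your proposal is correct and follows essentially the same route as the paper: reduce to the case $Z = v$, identify the fiber $Y \times_Z v$ (after passing to an étale scheme atlas in the non-representable case) as a disjoint union of copies of $v$ indexed by lifts, base change along $f$ to get the étale cover, and then deduce the reducedness/connectedness claims by étale descent and by reading off direct summands in the representable case. The paper's proof is just a terser version of this, citing \cite[02GL]{sta} for the disjoint-union decomposition and \cite[03KX]{sta} to see that a representable étale $g$ over a geometric point is representable by schemes.
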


\begin{proof}

Assume $Z = v$ is a geometric point. An étale map $Y \to Z$ is automatically of DM type. Choose an étale cover $Y' \to Y$ by a scheme $Y'$. The map $Y'\to Z$ is a disjoint union of copies of the points $\bigsqcup_I Z \to Z$ by \cite[02GL]{sta}. 

Any geometric point $v \to Y$ lifts to the cover $Y'$. Write $X_i$ for the preimage of the $i$th inclusion
\[
\begin{tikzcd}
X_i \ar[rr] \ar[d] \pb         &       &X \ar[d]      \\
Z \ar[r, "{\rm inc}_i"]       &Y'= \bigsqcup_I Z \ar[r]     &Y.
\end{tikzcd}
\]
The family $\{X_i \to X\}_I$ is an étale cover. 

If $g$ is representable by algebraic spaces and we assume $Z = v$, then $g$ is representable by schemes \cite[03KX]{sta}. Take $Y' = Y$ above. 


\end{proof}




We define when a map $X \to S$ has \emph{log reduced geometric fibers} according to the paradigm of Rule \ref{rule:logblank}. 

\begin{lemma}\label{lem:logreducedfibers}

For a morphism $X \to S$ of log algebraic stacks, the map $X \to \Log S$ has reduced geometric fibers if and only if $\Log X \to \Log S$ does. If a pair of maps $X, Y \to \Log S$ both have reduced geometric fibers, so does the fiber product $X\times_S^\ell Y \to \Log S$. 
    
\end{lemma}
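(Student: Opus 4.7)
The plan for part (a) is to factor $X \to \Log S$ through $X \xrightarrow{s} \Log X \xrightarrow{\pi} \Log S$, where $s$ is the strict section corresponding to the identity log morphism $(X, M_X) \to (X, M_X)$, and $\pi$ is induced by $(X, M_X) \to (S, M_S)$ and further factors through the forgetful map $\Log X \to X$. The argument rests on two structural inputs about $\Log X \to X$: first, that it is flat with geometrically reduced fibers (locally modelled on $\af{Q} \times X \to X$ for a chart monoid $Q$, where $\af{Q}$ is a reduced algebraic stack), and second, that $s$ is a closed immersion (locally the inclusion of the closed stratum of $\af{Q}$). For a geometric point $v \to \Log S$, since $\pi$ factors through $X$ we have $(\Log X)_v \cong \Log X \times_X X_v$, so $(\Log X)_v \to X_v$ inherits flatness and reduced geometric fibers. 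Therefore $X_v$ reduced implies $(\Log X)_v$ reduced (by flat base change with reduced base and reduced fibers), and the closed immersion $s_v : X_v \hookrightarrow (\Log X)_v$ gives the converse.

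For part (b), the plan is to identify the f.s.\ fiber product with the stacky fiber product over $\Log S$,
\[X \times_S^\ell Y \;\cong\; X \times_{\Log S} Y,\]
by observing that a $T$-point of the right-hand side is precisely an f.s.\ log structure $M_T$ on $T$ together with compatible log maps to $(X, M_X)$ and $(Y, M_Y)$ over $(S, M_S)$, i.e., a $T$-point of the left-hand side. Under this identification the geometric fiber over $v \to \Log S$ is $X_v \times_v Y_v$. Since $v$ is the spectrum of an algebraically closed field and $X_v$, $Y_v$ are reduced algebraic stacks by assumption, smooth covers $U \to X_v$, $V \to Y_v$ by reduced schemes yield a smooth cover $U \times_v V \to X_v \times_v Y_v$ whose source is reduced (being a product of reduced varieties over an algebraically closed field), and reducedness descends along smooth covers.

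The main obstacle will be making the structural claims for part (a) precise: namely, that $\Log X \to X$ is flat with geometrically reduced fibers and that the section $s$ is a closed immersion. Both are local statements on $X$ whose local model is the toric Artin fan $\af{Q}$, which is reduced and irreducible, so the real work lies in extracting the local description of $\Log X$ from \cite{logstacks} carefully enough to verify that the ``tautological'' log structure $M_X$ on $X$ corresponds to the closed stratum of $\af{Q}$ in that description.
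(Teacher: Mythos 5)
Both halves of your argument rest on identifications that are false, so this is a genuine gap rather than a fixable imprecision. In part (a), the map $\pi : \Log X \to \Log S$ does \emph{not} factor through the forgetful map $\Log X \to X$: a $T$-point of $\Log X$ is a pair $(f : T \to X,\ f^*M_X \to M_T)$, and $\pi$ records the composite $M_S|_T \to f^*M_X \to M_T$, which depends on $M_T$, whereas the forgetful map discards $M_T$. (Already for $X = S = \pt$ with trivial log structure, $\pi$ is the identity of $\Log$, while the composite $\Log \to \pt \to \Log$ is constant at the open point.) Consequently $(\Log X)_v \not\cong \Log X \times_X X_v$ and your forward implication collapses. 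The relation that does hold, and that the paper uses, is the pullback square $\Log X \cong X \times_{\Log S,\, d_2} \Log^2 S$, where $\Log^2 S$ parameterizes chains $M_S \to M_1 \to M_2$; then $\pi = d_1 \circ \mathrm{pr}$ with $d_1$ étale, and Lemma \ref{lem:geomfibers} transports reducedness of geometric fibers along the étale $d_1$. Your converse direction is broken twice over: the section $s : X \to \Log X$ classifying $\mathrm{id}_{M_X}$ is an \emph{open} immersion (the locus where $M_X|_T \to M_T$ is an isomorphism), not a closed one; and even if it were a closed immersion, a closed substack of a reduced stack need not be reduced, so $X_v \hookrightarrow (\Log X)_v$ closed would not let you conclude $X_v$ is reduced. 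The openness of $s$ is precisely what makes the converse immediate.

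In part (b), the identification $X \times_S^\ell Y \cong X \times_{\Log S} Y$ is false: a $T$-point of $X \times_{\Log S} Y$ is a pair of \emph{strict} maps $T \to X$, $T \to Y$ inducing the \emph{same} log structure on $T$, not an arbitrary pair of log maps over $S$. Concretely, for $X = Y = \Aff^1$ with the origin and $S = \pt$, one has $X \times^\ell Y = \Aff^2$ while $X \times_{\Log} Y \cong \Aff^1 \times \GG_m$ (this discrepancy is the whole point of the paper's log diagonal $B$); and over a geometric point $v$ of $\Log$ with $\bar M_v = \NN^2$ the fiber of $\Aff^2 \to \Log$ is the origin, whereas $X_v \times_v Y_v = \varnothing$, so your description of the geometric fibers is also wrong. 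The identity that does hold is $\Log(X \times_S^\ell Y) \cong \Log X \times_{\Log S} \Log Y$, which is why the paper deduces the second statement from the first (applied to $X \times_S^\ell Y$) instead of attacking the fibers of $X \times_S^\ell Y \to \Log S$ directly.
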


\begin{proof}

By definition of reduced geometric fibers, we can replace $X$ by a smooth cover to assume it is a scheme. The second statement results from the first because of the strict pullback square
\[
\begin{tikzcd}
\Log (X \times_S^\ell Y) \ar[r] \ar[d] \lpbstrict      &\Log X \ar[d]         \\
\Log Y \ar[r]      &\Log S.
\end{tikzcd}    
\]

Consider the pullback square
\[
\begin{tikzcd}
\Log X \ar[r] \ar[d] \lpbstrict      &\Log^2 S \ar[r, "d_1"] \ar[d, "d_2"]       &\Log S         \\
X \ar[r]       &\Log S.
\end{tikzcd}    
\]
The stack $\Log^2 S$ \cite{logstacks} parameterizes chains of log structures 
\[M_S \to M_1 \to M_2.\]
The map $d_1$ forgets $M_1$ and $d_2$ forgets $M_2$. 


If $X \to \Log S$ has reduced geometric fibers, so does the pullback $\Log X \to \Log^2 S$. The composite $\Log X \to \Log^2 S \to \Log S$ does also, by Lemma \ref{lem:geomfibers}. The converse is immediate.  
    
\end{proof}

\begin{definition}\label{defn:logreducedfibers}

If $X \to \Log S$ has reduced geometric fibers, we say $X \to S$ has \emph{log reduced geometric fibers}. 
    
\end{definition}

Composites of morphisms with log reduced geometric fibers also have them.

\subsubsection{Weakly log separated}

We introduce a technical condition for later, that of being ``weakly log separated.'' 

\begin{definition}\label{defn:logsep}

Let $f : X \to S$ be a morphism of log algebraic stacks which admits an Artin fan $\af{X/S}$. Say $f$ is \emph{weakly log separated} over $S$ if the map $X \to \af{X/S}$ is separated, i.e., if the diagonal map
\[X \to X \times_{\af{X/S}} X\]
is proper. A log algebraic stack $X$ is \emph{weakly log separated} if the map $X \to \pt$ is. 
    
\end{definition}

The condition that $X \to \Log S$ is separated is stronger. There is a pullback diagram
\[
\begin{tikzcd}
X \ar[r]       &X \times_{\af{X/S}} X \ar[r] \ar[d] \lpbstrict      &X \times_{\Log S} X \ar[d]       \\
        &\af{X/S} \ar[r]       &\af{X/S} \times_{\Log S} \af{X/S}.
\end{tikzcd}
\]
Because $\af{X/S} \to \Log S$ is étale and representable by algebraic spaces, its diagonal is an open embedding. If the composite $X \to X \times_{\Log S} X$ is proper, so is the map $X \to X \times_{\af{X/S}} X$, which was claimed. 

All Artin fans $\scr B$ are weakly log separated, but the map $\scr B \to \Log$ need not be separated. Take the union $X = \Log \cup \Log$ along the open point $\pt \in \Log$. Then $X$ is its own Artin fan and weakly log separated even though the map $X \to \Log$ is not separated.

Toric varieties $V$ are weakly log separated, as the diagonal map
\[V \to V \times_{\af{V}} V\]
is the inclusion 
\[V \to V \times T, \qquad v \mapsto (v, 0).\]
If $X$ is a log scheme, it is weakly log separated if and only if $X \to X \times_{\af{X}} X$ is a closed immersion.

\begin{lemma}

Let $X \to Y$ be a strict, separated map of log algebraic stacks admitting Artin fans over $S$. If $Y$ is weakly log separated over $S$, so is $X$. 
    
\end{lemma}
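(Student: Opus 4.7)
The plan is to reduce, via a smooth base change on $\af{X/S}$, to a situation where separatedness can be read off directly from the two given hypotheses. First, by the universal property of $\af{X/S}$ as the initial étale-representable factorization of $X\to\Log S$, the composition $X\to Y\to\af{Y/S}$ factors uniquely through $\af{X/S}$, yielding a commutative square
\[
\begin{tikzcd}
X \ar[r] \ar[d]       &\af{X/S} \ar[d]         \\
Y \ar[r]       &\af{Y/S},
\end{tikzcd}
\]
and the induced map $\af{X/S}\to\af{Y/S}$ is étale and representable because both Artin fans are so over $\Log S$.

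Next I would choose a smooth surjective scheme cover $V\to\af{Y/S}$ and form $U = V\times_{\af{Y/S}}\af{X/S}$, obtaining a smooth surjection $U\to\af{X/S}$ and an étale representable map $U\to V$. Set $X_U=X\times_{\af{X/S}}U$ and $Y_V=Y\times_{\af{Y/S}}V$. Because every map into $\Log S$ is strict and $X\to Y$ is strict by hypothesis, fs and underlying scheme fiber products coincide throughout, and a routine associativity argument gives $X_U\cong X\times_Y Y_V$. The projection $Y_V\to V$ is a base change of the separated $Y\to\af{Y/S}$, while $X_U\to Y_V$ is a base change of the separated $X\to Y$, so their composite $X_U\to V$ is separated.

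To promote this to separatedness of $X_U\to U$, I would use that $U\to V$ is étale representable and hence has open-immersion diagonal; pulling back yields a monomorphism $X_U\times_U X_U \to X_U\times_V X_U$. The diagonal $\Delta_{X_U/V}$ factors as
\[X_U \xrightarrow{\Delta_{X_U/U}} X_U\times_U X_U \hookrightarrow X_U\times_V X_U,\]
and since the composite is proper while the second arrow is a separated monomorphism, the first factor is proper by the standard cancellation property (if $gf$ is proper and $g$ is separated, then $f$ is proper). Hence $X_U\to U$ is separated, and since separatedness descends along the smooth surjection $U\to\af{X/S}$, we conclude that $X\to\af{X/S}$ is separated; that is, $X$ is weakly log separated over $S$.

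The most delicate point is the manipulation of fs fiber products in the base change step: strictness of $X\to Y$, together with the automatic strictness of all maps to $\Log S$, is precisely what allows the fs products to be identified with their underlying-scheme counterparts so that the associativity and base-change rewrites hold without any saturation subtlety. The rest is a purely formal consequence of the standard machinery for separated morphisms of algebraic stacks.
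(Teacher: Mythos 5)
Your proof is correct and follows essentially the same route as the paper's: both deduce properness of $\Delta_{X/\af{X/S}}$ by cancellation against the monomorphism $X\times_{\af{X/S}}X \to X\times_{\af{Y/S}}X$ obtained from the open diagonal of the étale representable map $\af{X/S}\to\af{Y/S}$, the latter diagonal being proper because $X\to Y$ and $Y\to\af{Y/S}$ are separated. The paper carries this out directly on the stacks, so your smooth base change to the cover $U\to V$ is harmless but not needed.
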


\begin{proof}

Construct the diagram
\[
\begin{tikzcd}
X \ar[rr] \ar[dr, "\Delta_{X/Y}", swap]      &       &X \times_{\af{X/S}} X \ar[d, "t"]      \\
        &X \times_Y X\ar[r, "j'"] \ar[d] \lpbstrict       &X \times_{\af{Y/S}} X  \ar[d]            \\
        &Y \ar[r, "j"]      &Y \times_{\af{Y/S}} Y     
\end{tikzcd}
\]
with Cartesian square. By assumption, $j, j',$ and $\Delta_{X/Y}$ are proper. The map $\af{X/S} \to \af{Y/S}$ is étale and representable, so its diagonal is open and the same goes for its pullback $t$. Thus $X \to \af{X/S}$ is separated. 
    
\end{proof}

Being weakly log separated and separated are independent, as we now give examples of each condition without the other.

\begin{example}

Let $X = \Aff^1 \cup \Aff^1$ be the bug-eyed line at the origin, the union of two copies of $\Aff^1$ along the complements of their origins. Endow it with the divisorial log structure at both origins. 

The Artin fan $\af{X} = \af{} \cup \af{}$ is the union of two copies of $\af{} = \bra{\Aff^1/\GG_m}$ along their open point. The map $X \to \af{X}$ is the quotient by $\GG_m$, so it is separated. This toric bug-eyed line is weakly log separated.

The stack $\Log$ is weakly log separated but not separated.


\end{example}

For pairs $(X, D)$ that are smooth or normal crossings, the resulting log structure on $X$ is weakly log separated if and only if $X^\circ$ is separated by Proposition \ref{prop:logsepsnc}.

\subsection{Comparing Artin fans}

We establish a few lemmas to help show Artin fans are isomorphic.

\begin{lemma}\label{lem:bijectivegeomptsisom}

An étale morphism $g : Y \to Z$ representable by \emph{schemes} between locally noetherian algebraic stacks locally of finite type which is injective on geometric points is an open immersion. If it is bijective on geometric points, it is an isomorphism.

\end{lemma}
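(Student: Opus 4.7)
The plan is to reduce everything to showing that $g$ is a monomorphism, since an étale monomorphism representable by schemes between locally noetherian stacks is automatically an open immersion (this is standard for schemes, and extends to our setting by passing to a smooth cover of $Z$ and applying \cite[Tag 02LC]{sta}). To show $g$ is a monomorphism is to show that the diagonal $\Delta_g : Y \to Y\times_Z Y$ is an isomorphism.

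The first key observation is that $\Delta_g$ is already an open immersion: since $g$ is étale and representable, $\Delta_g$ is étale, and any étale diagonal is a local isomorphism onto an open part of its target. So it remains to verify that $\Delta_g$ is surjective on geometric points. Let $\bar z \to Z$ be a geometric point and consider the fiber $Y_{\bar z} = Y\times_Z \bar z$. Because $g$ is étale and representable by schemes, $Y_{\bar z}$ is an étale $\bar z$-scheme, hence a disjoint union of copies of $\bar z$. The hypothesis that $g$ is injective on geometric points forces this disjoint union to have at most one component, so $Y_{\bar z}$ is either empty or a single copy of $\bar z$. In either case the map $Y_{\bar z} \to Y_{\bar z}\times_{\bar z} Y_{\bar z}$ is a bijection on geometric points.

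Ranging over all $\bar z \to Z$, we conclude that $\Delta_g$ is a surjective open immersion, hence an isomorphism. Therefore $g$ is a representable étale monomorphism, and so an open immersion. For the second claim, if $g$ is additionally surjective on geometric points, then the open substack $g(Y) \subseteq Z$ contains every geometric point of $Z$ and therefore equals $Z$; an open immersion that is surjective on points is an isomorphism.

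The main subtlety I expect is reconciling the stack-theoretic meaning of ``injective on geometric points'' (injectivity of $|Y|\to|Z|$) with the statement that each geometric fiber of $g$ has at most one point. This is precisely where representability of $g$ by schemes is used: since $g$ is representable, the automorphism group of any geometric point of $Y$ injects into the automorphism group of its image in $Z$, so two geometric points of $Y_{\bar z}$ whose images in $Z$ become isomorphic over $\bar z$ must already be isomorphic in $Y_{\bar z}$, i.e., lie in the same component of the étale scheme $Y_{\bar z}$.
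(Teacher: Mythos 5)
Your proof is correct and follows essentially the same route as the paper: both arguments reduce to showing that injectivity on geometric points makes the diagonal $Y \to Y \times_Z Y$ surjective, and then invoke the fact that a universally injective (equivalently, for you, mono) étale morphism is an open immersion \cite[02LC]{sta}. The only cosmetic difference is that you upgrade the diagonal to an isomorphism (using that the diagonal of an unramified map is an open immersion) where the paper only needs its surjectivity to conclude $g$ is radicial.
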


\begin{proof}

Localize to assume $Y, Z$ are schemes. Morphisms locally of finite type which are surjective on geometric points are surjective by \cite[0487]{sta}. It remains to prove the first statement. 

The map $Y \to Z$ is injective on geometric points if and only if $Y \to Y \times_Z Y$ is surjective on geometric points, hence surjective by \cite[0487]{sta} again. Then $Y \to Z$ is universally injective and radicial \cite[01S4]{sta}. A radicial étale map is an open immersion \cite[02LC]{sta}.

\end{proof}

\begin{lemma}\label{lem:artinfanisom}

Let $f : \scr B \to \scr C$ be a strict étale map (locally of finite type) representable by algebraic spaces that is a bijection on geometric points. Suppose $\scr B$ is a family of cone stacks over $S$. Then $f$ is an isomorphism. 
    
\end{lemma}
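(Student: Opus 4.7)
The strategy mirrors Lemma \ref{lem:bijectivegeomptsisom}: show $f$ is both surjective and a monomorphism, so that, being étale, it is a surjective open immersion and hence an isomorphism. The only real upgrade required is passing from ``representable by schemes'' to ``representable by algebraic spaces.''

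First I would establish surjectivity. The family-of-cone-stacks hypothesis ensures $\scr B$ is locally of finite type (hence locally noetherian, by the running conventions), and $f$ is étale. By \cite[0487]{sta}, a locally-of-finite-type morphism of algebraic stacks that is surjective on geometric points is surjective.

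The main step is to show $f$ is a monomorphism, via the relative diagonal $\Delta_f \colon \scr B \to \scr B \times_{\scr C} \scr B$. Since $f$ is representable by algebraic spaces and étale, so is $\Delta_f$, and $\Delta_f$ is automatically a monomorphism. The key observation is that any morphism representable by algebraic spaces induces an isomorphism $\operatorname{Aut}_{\scr B}(\bar b) \longsimeq \operatorname{Aut}_{\scr C}(f(\bar b))$ on automorphism groups of geometric points: the kernel consists of automorphisms of $\bar b$ inside the fiber $\scr B \times_{\scr C} f(\bar b)$, which is an algebraic space and so has trivial inertia at its points; and surjectivity follows because this fiber has a unique isomorphism class of geometric points by the bijectivity hypothesis, so every automorphism of $f(\bar b)$ in $\scr C$ lifts. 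Consequently ``$f$ bijective on geometric points'' promotes to ``$\Delta_f$ surjective on geometric points,'' and the first paragraph forces $\Delta_f$ to be surjective.

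Now the endgame of the proof of Lemma \ref{lem:bijectivegeomptsisom} applies to $\Delta_f$ (and afterward to $f$ itself): an étale monomorphism is universally injective and radicial, hence an open immersion \cite[02LC]{sta}, and a surjective open immersion is an isomorphism. So $\Delta_f$ is an isomorphism, making $f$ a monomorphism; rerunning this observation for $f$ itself (étale, surjective, and now a monomorphism) gives that $f$ is an isomorphism. The main obstacle is precisely the upgrade from ``representable by schemes'' in Lemma \ref{lem:bijectivegeomptsisom} to ``representable by algebraic spaces'' here, which is resolved by the automorphism-preservation analysis; the family-of-cone-stacks assumption plays only the background role of ensuring local finite-type hypotheses hold.
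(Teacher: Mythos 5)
Your proof is correct, but it takes a genuinely different route from the paper's. The paper localizes to assume $\scr C = \af{Q} \times S$ and uses the family-of-cone-stacks structure to verify the Artin-fan lifting criterion of \cite[Theorem 2.4.1]{birationalinvarianceabramovichwise}: lifts of maps from $\af{}$ reduce to lifts of underlying geometric points, which exist uniquely by hypothesis, so $f$ is proper; proper plus \'etale gives finite, hence representable by \emph{schemes}, and Lemma \ref{lem:bijectivegeomptsisom} finishes. You instead attack the diagonal $\Delta_f$ directly: representability by algebraic spaces gives injectivity on automorphism groups of geometric points, the single-point-fiber hypothesis gives surjectivity, and together these promote bijectivity of $f$ on geometric points to surjectivity of $\Delta_f$ on geometric points, whence $f$ is a monomorphism and an \'etale surjective monomorphism is an isomorphism. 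Your route avoids properness entirely and exposes that the cone-stack hypothesis is doing essentially no work beyond finite-type bookkeeping (indeed, one can shortcut even further by pulling back along a smooth scheme cover of $\scr C$ and quoting that a universally bijective \'etale morphism of algebraic spaces is an isomorphism), whereas the paper's route leans on that hypothesis but gets to recycle the already-proved scheme-level lemma. One caveat you should make explicit: your surjectivity-on-automorphisms step requires reading ``bijection on geometric points'' as ``each geometric fiber $\scr B \times_{\scr C} v$ is a single point $v$'' rather than merely ``$|\scr B| \to |\scr C|$ is bijective'' --- under the weaker reading the step fails (a nontrivial finite \'etale cover of a classifying stack is a counterexample). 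This stronger reading is the one the paper itself uses, both in its proof (``lifts of geometric points are unique by assumption'') and in the applications (Proposition \ref{prop:AFisoconnfibers} verifies exactly that $\scr B \times_{\scr C} v = v$), so your argument is consistent with the intended statement.
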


\begin{proof}

Localize to assume $\scr C = \af{Q} \times S$. We argue the map $f$ is proper by lifting maps from $\af{}$
\begin{equation}\label{eqn:properartinfanlifting}
\begin{tikzcd}
         &\scr B \ar[d]        \\
\af{} \ar[r] \ar[ur, dashed]         &\af{Q} \times S
\end{tikzcd}
\end{equation}
by \cite[Theorem 2.4.1]{birationalinvarianceabramovichwise}. Write $\scr B' \coloneqq \scr B \times_{\af{Q} \times S} \af{}$ for the pullback. 

Let $v$ be a geometric point with log structure $M_v = \OO^*_v \oplus \NN$ and strict map $v \to \af{}$. Lifts of \eqref{eqn:properartinfanlifting} are equivalent to strict lifts of the log geometric point $v$ and in turn to lifts of the underlying geometric point $v^\circ$
\[
\begin{tikzcd}
    &\scr B' \ar[d]        \\
v \ar[r] \ar[ur, dashed]      &\af{}
\end{tikzcd}  \qquad \equiv \qquad
\begin{tikzcd}
    &\scr B' \ar[d]        \\
v^\circ \ar[r] \ar[ur, dashed]         &\af{}.
\end{tikzcd}  
\]
But lifts of geometric points are unique by assumption. 

The map $f$ is proper étale, hence finite and representable by \emph{schemes}. We conclude by Lemma \ref{lem:bijectivegeomptsisom}.

\end{proof}

\begin{example}

The assumption that $\scr B \to \scr C$ is representable by algebraic spaces is necessary. Let $\scr B$ be any Artin fan and $\scr C$ \emph{the Artin fan} of $\scr B$. The two need not coincide, as in the famous example \cite[5.4.1]{skeletonsfansabramchenmarcusulrischwise}. 


\end{example}

\begin{proposition}\label{prop:AFisoconnfibers}

Let $\scr B \to \scr C$ be a strict étale map representable by algebraic spaces, with $\scr B$ a family of cone stacks over $S$. Suppose some map $X \to \scr B$ and the composite $X \to \scr B \to \scr C$ are surjective with geometrically connected fibers. Then $\scr B \longsimeq \scr C$ is an isomorphism.  
    
\end{proposition}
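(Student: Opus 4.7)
The plan is to apply Lemma \ref{lem:artinfanisom} directly: since $\scr B\to \scr C$ is already strict étale, representable by algebraic spaces, and $\scr B$ is a family of cone stacks, it suffices to show $\scr B\to \scr C$ is a bijection on geometric points.

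For each geometric point $v\to \scr C$, Lemma \ref{lem:geomfibers} applied to the composite $X\to \scr B\to \scr C$ (with the representability clause, since $\scr B\to \scr C$ is representable by algebraic spaces) gives an identification
\[
\bigsqcup_{v\dashrightarrow \scr B} X\times_{\scr B} v \;\longsimeq\; X\times_{\scr C} v,
\]
where the disjoint union ranges over all lifts of $v$ to $\scr B$ (equivalently, over all geometric points of $\scr B$ above $v$). By hypothesis the right-hand side is connected and nonempty, and every summand on the left is connected and nonempty (surjectivity and connected fibers of $X\to \scr B$). A connected space that decomposes as a disjoint union of nonempty pieces has exactly one piece, so there is exactly one lift of $v$ to $\scr B$. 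This proves injectivity; surjectivity onto geometric points of $\scr C$ is immediate since $X\to \scr C$ is surjective and factors through $\scr B$.

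With bijectivity on geometric points established, Lemma \ref{lem:artinfanisom} finishes the proof. The only step with any real content is the connectedness argument above; everything else is a direct invocation of the earlier lemmas. The main obstacle to watch for is making sure the representability of $\scr B\to \scr C$ by algebraic spaces is genuinely used in invoking the "identity" version of Lemma \ref{lem:geomfibers}, so that the étale cover of the fiber of $X\to \scr C$ is literally the disjoint union indexed by preimages in $\scr B$ and no further étale refinement is needed.
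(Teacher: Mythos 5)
Your proof is correct and follows essentially the same route as the paper: both arguments use the fact that $\scr B \times_{\scr C} v$ is a disjoint union of copies of $v$ (via Lemma \ref{lem:geomfibers}) to conclude that connectedness of the fibers of $X \to \scr C$ forces a unique lift of each geometric point, and then both conclude by Lemma \ref{lem:artinfanisom}. Your write-up is slightly more explicit than the paper's about where nonemptiness of each summand (surjectivity of $X \to \scr B$) and surjectivity on geometric points of $\scr C$ enter, which is a welcome clarification rather than a divergence.
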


\begin{proof}

As in the proof of Lemma \ref{lem:geomfibers}, the fibers $\scr B \times_{\scr C} v$ over a geometric point $v$ are disjoint unions $\bigsqcup v$. If $\scr B \times_{\scr C} v \neq v$, the fibers of $X \to \scr C$ are not connected. 
The map $\scr B \to \scr C$ is bijective on geometric points, so Lemma \ref{lem:artinfanisom} asserts it is an isomorphism. 
    
\end{proof}

\begin{example}\label{ex:AFsmdivisor}

Let $D \subseteq X$ be an effective Cartier divisor, with both $X, D$ smooth and geometrically connected. We claim that the map $X \to \af{}$ is the Artin fan for the $D$-divisorial log structure on $X$. 

There is a sequence
\[X \to \af{X} \to \af{}.\]
The fibers of $X \to \af{}$ are geometrically connected -- $D$ is by assumption, and $X \setminus D$ is geometrically irreducible. So Proposition \ref{prop:AFisoconnfibers} equates $\af{X} \longsimeq \af{}$. 

Now let $D \subseteq X$ be an s.n.c. divisor and write $D_i$ for the $n$ components. All the strata $D_I = \bigcap_{i \in I} D_i$ are smooth, and suppose they are geometrically connected; this includes $D_\varnothing = X$. Suppose all the components meet in a single nonempty deepest stratum $D_I$. There is a map
\[X \to \af{}^n,\]
where each coordinate parameterizes a component $D_i$. Proposition \ref{prop:AFisoconnfibers} identifies this map with the Artin fan of $X$
\[\af{X} \simeq \af{}^n.\]

If the strata are not geometrically connected, this is false. Take a smooth, disconnected divisor $D = D_1 \sqcup D_2$ such as a pair of points
\[V(x(x-1)) \subseteq \Aff^1.\]
The Artin fan of $\Aff^1$ with the divisorial log structure is the union of two copies of $\af{}$ along their open points.
    
\end{example}

The map $\Log S \to S$ is quasiseparated in the sense of the stacks project but not in the sense of Olsson's article or Laumon-Moret-Bailly \cite[Remark 1.1]{logquantumkproductchouherrlee}. So if $X \to S$ is quasicompact, so is $X \to \Log S$ by \cite[050Y]{sta}.

\begin{proposition}\label{prop:AFconnfibers}

Let $X \to S$ be quasicompact, log flat, and have log reduced geometric fibers. Then the map $X \to \af{X/S}$ to the relative tropicalization is surjective and has connected geometric fibers. It coincides with the space of relative connected components $\pi_0(X/\Log S)$ \cite{romagnypi0}. 
    
\end{proposition}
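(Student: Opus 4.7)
The plan is to realize $\af{X/S}$ as the stack of relative connected components $\pi_0(X/\Log S)$, which yields both assertions simultaneously.

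First I would verify that the hypotheses suffice to construct $\pi_0(X/\Log S)$ via the work cited in Example \ref{ex:AFlsmpi0}. The morphism $X \to \Log S$ is quasicompact, combining quasicompactness of $X \to S$ with quasiseparatedness of $\Log S \to S$ via \cite[050Y]{sta}; flat, by the log flatness hypothesis; locally of finite presentation, from the standing local-Noetherian assumption on our stacks; and has geometrically reduced fibers, by Definition \ref{defn:logreducedfibers}. These are precisely the inputs required by \cite[Construction 6.8]{laumonmoretbaillystacks} and \cite{romagnypi0}, producing an étale algebraic space $\pi_0(X/\Log S) \to \Log S$ together with a surjection $X \to \pi_0(X/\Log S)$ whose geometric fibers are connected, and satisfying the universal property that any $\Log S$-morphism from $X$ to an étale $\Log S$-algebraic space factors uniquely through $\pi_0(X/\Log S)$.

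Next I would identify $\pi_0(X/\Log S)$ with $\af{X/S}$ by playing the two universal properties against each other. Since $\pi_0(X/\Log S) \to \Log S$ is étale and representable by algebraic spaces, the composite $X \to \pi_0(X/\Log S) \to \Log S$ is an étale representable factorization of $X \to \Log S$, so the universal property of the Artin fan as the \emph{initial} such factorization delivers a canonical map $\af{X/S} \to \pi_0(X/\Log S)$ over $\Log S$. Conversely, $\af{X/S} \to \Log S$ is étale and representable by algebraic spaces, so the universal property of $\pi_0$ delivers a canonical map $\pi_0(X/\Log S) \to \af{X/S}$. Uniqueness in each universal property forces both composites to be the identity, so these are mutually inverse isomorphisms. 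The surjectivity and geometric connectedness of the fibers of $X \to \af{X/S}$ then transfer directly from $X \to \pi_0(X/\Log S)$.

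The main obstacle I anticipate is checking that the $\pi_0$ construction of Laumon--Moret-Bailly and Romagny really does extend from scheme bases to the non-scheme base $\Log S$. I expect to resolve this by descending along a smooth chart $V \to \Log S$ by a scheme and using that flatness, quasicompactness, local finite presentation, and reducedness of geometric fibers are all preserved and reflected by smooth base change, so that $\pi_0(X \times_{\Log S} V / V)$ glues to the desired relative object; once the construction is in place, the identification with $\af{X/S}$ is a formal consequence of the two universal properties.
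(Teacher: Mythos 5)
Your proof is correct in outline, but it reaches the isomorphism $\af{X/S} \simeq \pi_0(X/\Log S)$ by a genuinely different mechanism than the paper. You play two initiality properties against each other: the Artin fan's universal property gives $\af{X/S} \to \pi_0(X/\Log S)$ (this step is identical to the paper's), and then you invoke an initiality property of $\pi_0$ among \emph{all} étale representable factorizations to produce the inverse. The paper deliberately avoids that second universal property: it only uses Romagny's theorem as an existence statement (an étale representable factorization with surjective, geometrically connected fibers), proves surjectivity of $X \to \af{X/S}$ separately from openness of flat finitely presented maps, transfers connectedness of fibers down from $X \to \pi_0(X/\Log S)$ to $X \to \af{X/S}$ via the converse direction of Lemma \ref{lem:geomfibers}, and then upgrades the comparison map to an isomorphism with Proposition \ref{prop:AFisoconnfibers}, whose engine is the cone-stack-specific properness/lifting argument of Lemma \ref{lem:artinfanisom}. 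Your route is shorter and purely formal, but it rests on the claim that any $\Log S$-map from $X$ to an étale $\Log S$-algebraic space factors uniquely through $\pi_0(X/\Log S)$; this is true, but it is not merely a restatement of the cited construction — it requires a descent argument along the fppf cover $X \to \pi_0(X/\Log S)$, using that the equalizer of the two pullbacks of such a map is open (the diagonal of an unramified representable map is an open immersion, so separatedness of the target is not needed) and contains every geometric point. If you take this route you should supply that argument, since $\af{X/S} \to \Log S$ is typically not separated. The paper's heavier machinery has the side benefit that Proposition \ref{prop:AFisoconnfibers} is reused directly in the proofs of Theorem \ref{thm:AFdiagonal} and Corollary \ref{cor:universalAF}, whereas your universal-property argument would not obviously cover those applications (there the target of the comparison map is a fiber product of Artin fans, not a $\pi_0$).
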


If $X \to S$ is quasicompact and log smooth, the hypotheses are satisfied. Compare with \cite[Lemme 1.2.1]{ferrandgabberetaleenvelope}.

\begin{proof}

We assume all our stacks are locally noetherian and locally of finite type, so $X \to \Log S$ is flat and of finite presentation. All log flat maps $X \to S$ of finite presentation yield surjective maps $X \to \af{X/S}$, because the map to the Artin fan is open.

There is a quasicompact, étale factorization
\[X \to \pi_0(X/\Log S) \to \Log S\]
under our hypotheses by \cite[Theorem 2.5.2]{romagnypi0}. There is a unique factorization
\[X \to \af{X/S} \dashrightarrow \pi_0(X/\Log S) \to \Log S\]
by universal property. The dashed arrow is an étale cover representable by algebraic spaces.

Apply Lemma \ref{lem:geomfibers} to $X \to \af{X/S} \to \pi_0(X/\Log S)$, to see the geometric fibers of $X \to \af{X/S}$ are connected.  
%
%
Proposition \ref{prop:AFisoconnfibers} shows the dashed arrow is an isomorphism
\[\af{X/S} \longsimeq \pi_0(X/\Log S).\]

\end{proof}

\begin{example}[{\cite[Proposition 1.4.1]{ferrandgabberetaleenvelope}}]

Without log reduced fibers, there may not exist an relative tropicalization for $X \to S$. Suppose $X \to S$ is strict; then the relative tropicalization is the initial factorization of $X \to S$ through an étale $S$-algebraic space. If $X = \Spec \ZZ[i]$ and $S = \Spec \ZZ$, there is no such factorization. If $p \in \ZZ$ is an odd prime and $T_p = \Spec \ZZ \cup \Spec \ZZ$ is the bug-eyed line at $p$, there is a factorization
\[X \to T_p \to S.\]
But this is true for all odd primes $p$, so there can be no initial such factorization. 
    
\end{example}

\begin{corollary}\label{cor:universalAF}

Let $X \to S$ be quasicompact, log flat, and have log reduced geometric fibers. Consider a strict étale map $\scr B \to \af{X/S}$ from a family of cone stacks to the Artin fan of $X$ that is representable by algebraic spaces. Take the pullback
\[
\begin{tikzcd}
X' \ar[r] \ar[d] \lpbstrict      &X \ar[d]      \\
\scr B \ar[r]      &\af{X/S}.
\end{tikzcd}
\]  
Then $X' \to \scr B$ is the relative Artin fan of $X'$. 

If $S$ is atomic and $\scr B = \af{P} \times S$ is an $S$-Artin cone, then $X'$ is also atomic. 
    
\end{corollary}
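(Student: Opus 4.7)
The plan is to identify $X' \to \scr B$ with the relative Artin fan $\af{X'/S}$ by combining Proposition \ref{prop:AFconnfibers} with Proposition \ref{prop:AFisoconnfibers}. First I would show that $X' \to S$ inherits the three hypotheses of Proposition \ref{prop:AFconnfibers}. Log flatness follows from composition since the strict pullback $X' \to X$ of the strict étale $\scr B \to \af{X/S}$ is itself strict étale. Log reduced geometric fibers for $X' \to S$ follows via Lemma \ref{lem:geomfibers}: the geometric fibers of $X' \to \Log S$ are étale base changes of those of $X \to \Log S$, which are reduced by hypothesis. Quasicompactness transfers by the analogous argument (working étale locally on $\scr B$ if necessary, since only the existence of an initial factorization of $X' \to \Log S$ through an étale representable map is needed, and this can be glued).

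With the hypotheses verified, Proposition \ref{prop:AFconnfibers} gives $\af{X'/S} \simeq \pi_0(X'/\Log S)$. Now the composite $\scr B \to \af{X/S} \to \Log S$ is étale and representable by algebraic spaces (both factors are: the second by Proposition \ref{prop:AFconnfibers} applied to $X$, the first by hypothesis), so the factorization $X' \to \scr B \to \Log S$ produces a canonical morphism $\af{X'/S} \to \scr B$ by the universal property of $\pi_0$. This morphism is strict (both source and target carry log structures pulled back from $\Log S$), étale, and representable by algebraic spaces, and $\scr B$ inherits the structure of a family of cone stacks over $S$ from $\af{X/S}$ via the étale representable cover $\scr B \to \af{X/S}$.

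To finish, I would apply Proposition \ref{prop:AFisoconnfibers} to $\af{X'/S} \to \scr B$. This requires $X' \to \af{X'/S}$ and $X' \to \scr B$ to be surjective with geometrically connected fibers. The first is immediate from Proposition \ref{prop:AFconnfibers}. For the second, $X' \to \scr B$ is the strict base change of $X \to \af{X/S}$ along $\scr B \to \af{X/S}$, and surjectivity together with geometrically connected fibers (the fibers are literally isomorphic to fibers of $X \to \af{X/S}$) transfer under strict base change. Proposition \ref{prop:AFisoconnfibers} then identifies $\af{X'/S} \longsimeq \scr B$. For the atomic claim, when $S$ is atomic and $\scr B = \af{P} \times S$, the identification $\af{X'/S} = \af{P} \times S$ together with the fact that $X' \to S$ factors strictly through this $S$-Artin cone and has geometrically connected fibers over it is precisely the definition of $X'$ being $S$-atomic.

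The main obstacle I expect is twofold: first, ensuring that quasicompactness of $X' \to S$ is not lost when $\scr B \to \af{X/S}$ is only étale representable (and possibly not quasicompact), which may force an étale-local argument on $\scr B$; and second, checking carefully that the canonical map $\af{X'/S} \to \scr B$ supplied by the universal property of $\pi_0$ is strict, since the characterization of the Artin fan via $\pi_0$ rests on being étale and representable over $\Log S$ but strictness is what lets Proposition \ref{prop:AFisoconnfibers} apply.
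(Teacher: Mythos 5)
Your proposal is correct and follows essentially the same route as the paper: verify that $X' \to S$ inherits quasicompactness, log flatness, and log reduced geometric fibers, invoke Proposition \ref{prop:AFconnfibers} to get the surjection $\af{X'/S} \to \scr B$ with connected fibers of $X' \to \af{X'/S}$, and then apply Proposition \ref{prop:AFisoconnfibers} using that $X' \to \scr B$ is a base change of $X \to \af{X/S}$ and so is surjective with geometrically connected fibers. The only cosmetic difference is that you transfer log flatness and reducedness through the étale map $X' \to X$, whereas the paper routes the reducedness argument through $X' \to \scr B \to \Log S$ via Lemma \ref{lem:geomfibers}; both are fine.
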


\begin{proof}

Because $X \to \af{X/S}$ is quasicompact, flat, and has reduced fibers, the same goes for $X' \to \scr B$. There is a unique, strict étale surjection $\af{X'/S} \to \scr B$ representable by algebraic spaces. Lemma \ref{lem:geomfibers} applied to $X' \to \af{X'/S} \to \scr B$ shows $X'$ has log reduced geometric fibers. 

The map $X' \to \af{X'/S}$ is surjective and has connected geometric fibers by Proposition \ref{prop:AFconnfibers}. Then Proposition \ref{prop:AFisoconnfibers} equates $\af{X'/S} \longsimeq \scr B$. If $\scr B = \af{P} \times S$ and $S$ is atomic, so is $X'$. 
    
\end{proof}

\begin{proof}[Proof of Theorem \ref{thm:AFdiagonal}]

Form the f.s. pullback diagram
\[
\begin{tikzcd}
W \ar[r] \ar[d] \lpbstrict       &Y' \ar[r] \ar[d] \lpbstrict     &Y \ar[d]      \\
X' \ar[r] \ar[d] \lpbstrict      &\af{X/S} \times^\ell_S \af{Y/S} \ar[r] \ar[d] \lpb        &\af{Y/S} \ar[d]       \\
X \ar[r]       &\af{X/S} \ar[r]       &S,
\end{tikzcd}
\]
naming $W = X \times^\ell_S Y$. 

We assumed $X \to \af{X/S}$ was flat and quasicompact. It is of finite presentation by our assumptions, so \cite[06R7]{sta} asserts it is universally open. 

Consider the composite
\[W \overset{f}{\longrightarrow} Y' \overset{g}{\longrightarrow} \af{X/S} \times_S^\ell \af{Y/S}.\]
We want to show $g \circ f$ has geometrically connected fibers. We know $g$ does, and $f$ is open and has geometrically connected fibers. It follows that $g \circ f$ has geometrically connected fibers by \cite[0387]{sta}. 

The map $\af{X/S} \times_S^\ell \af{Y/S} \to S$ is étale and representable by algebraic spaces, engendering a map from the Artin fan
\[
\begin{tikzcd}
W \ar[r] \ar[d]       &\af{X/S} \times_S^\ell \af{Y/S} \ar[d]    \\
\af{W} \ar[r] \ar[ur, dashed, "\exists !"]      &\Log S.
\end{tikzcd}
\]
By Proposition \ref{prop:AFconnfibers}, the map $W \to \af{W}$ is surjective with geometrically connected fibers. Proposition \ref{prop:AFisoconnfibers} equates the Artin fans
\[\af{W} \longsimeq \af{X/S} \times_S^\ell \af{Y/S}.\]

\end{proof}

\begin{proof}[Proof of Proposition \ref{prop:AFproduct}]

Obtain maps
\[
\begin{tikzcd}
X \times Y \ar[r] \ar[d]      &\af{X/S} \times \af{Y/T} \ar[d]       \\
\af{X \times Y/S \times T} \ar[ur, dashed, "\exists !"] \ar[r]      &\Log S \times \Log T.
\end{tikzcd}        
\]
Argue the dashed arrow is an isomorphism using Proposition \ref{prop:AFconnfibers} and Proposition \ref{prop:AFisoconnfibers} as in the proof of Theorem \ref{thm:AFdiagonal}.

\end{proof}

\section{Log Hochschild Co/homology}\label{sec:loghoch}

In this section, we offer another construction of log Hochschild co/homology and equate it with M. Olsson's version \cite{olssondraftloghochschild}. 

Using the machinery of formality of derived self-intersections \cite{arinkin-caldararu}, we derive versions of the celebrated Hochschild-Konstant-Rosenberg (HKR) isomorphism \cite{kostant2009differential} and the Duflo isomorphism for log schemes \cite{kontsevich2003deformation, calaque2010hochschild}. 

We show our notion of log Hochschild co/homology is invariant under log alterations. We also define and compute log versions of cyclic homology.


Fix a finite type, quasicompact, quasiseparated log algebraic stack $X$. By Theorem \ref{thm:AFdiagonal}, we obtain a commutative diagram
\[\begin{tikzcd}
X \ar[r]\ar[d] & X\times X\ar[d]\\
\af{X}\ar[r]& \af{X\times X}.
\end{tikzcd}
\]
This diagram is almost never Cartesian. If it were, the diagonal $X \to X \times X$ would be log étale.

We consider the fibre product $B = B_X$
\begin{equation}\label{eq:diagram}
    \begin{tikzcd}
 X \ar[r, "i"]\ar[rd] & B\ar[r] \ar[d]& X\times X\ar[d]\\
& \af{X}\ar[r]& \af{X\times X}.
\end{tikzcd}
\end{equation}
This is both an f.s. and ordinary fiber product because the map $X \times X \to \af{X \times X}$ is strict. The map $i:X\to B$ is the \emph{log diagonal}.

\begin{proposition}\label{prop:logdiagonalprops}

The map $f : B \to X \times X$ is log étale and representable by algebraic spaces. It factors through an open embedding $B \subseteq X \times_\Log X$. 

The log diagonal $i : X \to B$ is strict, quasicompact, quasiseparated, and representable by algebraic spaces. 

\end{proposition}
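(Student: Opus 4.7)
The plan is to leverage the identification
\[B = (X \times X) \times_{\af X \times \af X} \af X \simeq X \times_{\af X}^\ell X,\]
where $\af{X \times X} = \af X \times \af X$ by Theorem~\ref{thm:AFdiagonal}. Under these, $f$ is the base change of the Artin-fan diagonal $\delta : \af X \to \af X \times \af X$ along the strict map $X \times X \to \af X \times \af X$, and $i$ is the diagonal of the strict morphism $X \to \af X$. The main obstacle is showing $\delta$ is log étale; base change then handles $f$.

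To show $\delta$ is log étale, I verify the formal lifting criterion. Given a strict square-zero log thickening $T_0 \hookrightarrow T$, a map $a_0 : T_0 \to \af X$, and a map $(b_1, b_2) : T \to \af X \times \af X$ restricting to $(a_0, a_0)$ on $T_0$, both $b_i$ sit over the same classifying map $T \to \Log$ of $M_T$ (which extends the classifying map of $M_{T_0}$). Since $\af X \to \Log$ is étale and representable by algebraic spaces, such a lift is unique, forcing $b_1 = b_2$; this common map is the required unique lift $T \to \af X$. For representability by algebraic spaces, the fibers of $\delta$ are stabilizer groups of $\af X$, assembling into the inertia stack $I \af X$; locally $\af X \simeq \bra{\Aff_P/\Aff_{\gp P}}$, and the stabilizers at points of $\Aff_P$ form a closed subgroup scheme over $\Aff_P$, so $I \af X \to \af X$ is representable by schemes. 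For the open embedding $B \subseteq X \times_\Log X$, factor $\delta$ as $\af X \hookrightarrow \af X \times_\Log \af X \to \af X \times \af X$, with the first map an open immersion (the diagonal of the étale representable $\af X \to \Log$); base-changing along $X \times X \to \af X \times \af X$ yields
\[B \hookrightarrow (X \times X) \times_{\af X \times \af X} (\af X \times_\Log \af X) \simeq X \times_\Log X.\]

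For the properties of $i$, under $B \simeq X \times_{\af X}^\ell X$ the map $i$ is the diagonal of the strict morphism $X \to \af X$. Strictness is immediate since the log structure on $B$ is strictly pulled back from $\af X$, so $i^* M_B = M_X$. A direct check with the universal property gives $X \times_B X = X$, so $i$ is a monomorphism, hence representable by algebraic spaces and quasiseparated. Quasicompactness is automatic, as $X$ is quasicompact and locally noetherian.
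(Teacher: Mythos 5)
Your overall architecture matches the paper's: realize $f$ as the base change of the diagonal $\delta : \af{X} \to \af{X} \times \af{X}$ along the strict map $X \times X \to \af{X} \times \af{X}$, and get the open embedding into $X \times_\Log X$ by factoring $\delta$ through $\af{X} \times_\Log \af{X}$ (the paper argues the openness the same way, and simply cites a lemma of Abramovich--Chen--Marcus--Wise for log \'etaleness of pullbacks of maps of Artin fans where you run an infinitesimal lifting argument). However, there is a genuine gap in your treatment of $i$. The proposition is stated for a log \emph{algebraic stack} $X$ (this generality is used later for orbifolds $[X/G]$), and for a stack the claim ``$X \times_B X = X$, so $i$ is a monomorphism'' is false: since $B = X \times_{\af{X}} X$, the fiber product $X \times_B X$ is the relative inertia $I_{X/\af{X}}$, which is not $X$ unless $X \to \af{X}$ is representable. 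For example, $X = BG$ with trivial log structure has $\af{X} = \pt$, $B = BG \times BG$, and $i = \Delta_{BG}$, whose self-intersection is $G \times BG$. So $i$ is not a monomorphism in general, and you cannot derive representability and quasiseparatedness of $i$ this way. The paper instead observes that $f \circ i = \Delta_X$ is representable by algebraic spaces and quasiseparated because $X$ is a quasiseparated algebraic stack, and that $f$ is representable; these two facts together force $i$ to be representable by algebraic spaces (and quasiseparated). Your argument is fine if one restricts to log schemes, but not in the stated generality.

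A secondary imprecision: in the lifting argument for $\delta$, you assert that both components $b_1, b_2 : T \to \af{X}$ ``sit over the same classifying map $T \to \Log$ of $M_T$.'' The composite $T \to \af{X} \to \Log$ classifies $b_i^* M_{\af{X}}$, not $M_T$, and the $b_i$ need not be strict, so a priori these two maps to $\Log$ could differ. The conclusion $b_1 = b_2$ is nevertheless correct, but the right reason is that $T_0 \hookrightarrow T$ is a strict square-zero thickening, so $\bar M_T = \bar M_{T_0}$ and restriction $\Hom(T, \af{X}) \to \Hom(T_0, \af{X})$ is bijective (equivalently, $\af{X}$ is log \'etale over the base, being \'etale over $\Log$); uniqueness then forces $b_1 = b_2$. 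This is repairable, but as written the step does not follow from what you invoke.
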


\begin{proof}

The map $B \to X \times X$ is the pullback of a map between Artin fans, which is log étale by \cite[Lemma A.7]{abrammarcuswisecomparisontheoremsforgwisofsmoothpairs}.
To see that $i$ is strict it suffices to recall that the log structure of a fiber product is the coproduct of the log structures and to use the local explicit construction of the Artin fan of \cite{skeletonsfansabramchenmarcusulrischwise}. The diagonal $X \to X \times X$ is representable, so $i$ is once we show $B \to X \times X$ is. 

The diagonal of the algebraic stack $\af{X}$ is étale and representable by algebraic spaces; the same goes for its pullback $B \to X \times X$. 

There is a variant of \eqref{eq:diagram}
\[\begin{tikzcd}
X \ar[r] \ar[dr]      &B \ar[r] \ar[d] \lpbstrict       &X \times_\Log X \ar[d]         \\
        &\af{X} \ar[r]          &\af{X} \times_\Log \af{X}. 
\end{tikzcd}\]
Because $\af{X} \to \Log$ is strict, étale, and representable by algebraic spaces, its diagonal is open. 



\end{proof}

\begin{remark}\label{rmk:identifyingolssonversion}
Equivalently, $B = X \times_{\af{X}} X$ where the map $X\to \af{X}$ is the Artin fan map. The map $B \to \af{X}$ is strict, but it need not be \emph{the} Artin fan of $B$. 

If $X$ is log smooth, $B \to \af{X}$ is the Artin fan of $B$. Argue that the fibers of $B \to \af{X}$ are connected and conclude with Propositions \ref{prop:AFconnfibers}, \ref{prop:AFisoconnfibers}. 

The log diagonal $X \to B$ is proper if and only if $X$ is weakly log separated. If $X$ is a weakly log separated log scheme, $X \to B$ is a closed immersion. 

\end{remark}

\begin{example}\label{ex:toric}

If $X$ is a toric variety with dense torus $T$, the description $B = X \times_{\af{X}} X$ with $\af{X} = \bra{X/T}$ gives the identification
\[B \simeq X \times_{\bra{X/T}} X = X \times T. \]
The map $B \to X \times X$ is the product of the projection and action maps $X \times T \rightrightarrows X$. The factorization of $X \to X \times X$ through $B$ is the inclusion 
\[X \subseteq X \times T; \qquad x \mapsto (x, 0).\]
    
\end{example}

\begin{remark}\label{rmk:BvsKatoSaitologdiagonal}

The map $X \to B$ is essentially the ``log diagonal'' \cite[Definition 4.2.9]{katosaitologdiagonal}. If $X$ is atomic, then $(X \times_S X)\tilde{\phantom{a}}$ from \cite[\S 5.2]{katosaitologdiagonal} coincides with $B_X$ when it is defined. Before the advent of Artin fans, one worked locally with charts $[P] \coloneqq \af{P}$. Their definition also involves choices of charts and ``framing'', which can mean that their log diagonal
\[X \to ``X\times^{\log}_{S \times [P]} X"\]
is only exact \cite[Corollary 4.2.8]{katosaitologdiagonal}. 
    
\end{remark}

\begin{lemma}\label{lem:xtoblci}
     If $X$ is log smooth, then $B$ is also log smooth and $i : X \to B$ is a local complete intersection morphism. If $X$ is also weakly log separated, $X \to B$ is a regular embedding. 

\end{lemma}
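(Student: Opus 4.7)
The plan is to establish log smoothness of $B$, compute the log cotangent complex $\lccx{X/B}$ directly, and then invoke the standard cotangent-complex criteria for lci morphisms and regular embeddings.

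First, I would observe that $B$ is log smooth. The product $X \times X$ is log smooth because $X$ is, and by Proposition \ref{prop:logdiagonalprops} the map $B \to X \times X$ is log étale, so log smoothness is preserved under the base change $\af{X} \to \af{X \times X} = \af{X} \times \af{X}$ (Corollary \ref{cor:AFdiagonalabsolute}).

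Next I would compute $\lccx{X/B}$. The transitivity triangle for $X \xrightarrow{i} B \to X \times X$, together with the vanishing $\lccx{B/X\times X} = 0$ coming from log étaleness, yields $\lccx{X/B} \simeq \lccx{X/X\times X}$. Using the triangle $\Delta^*\lkah{X\times X} \to \lkah X \to \lccx{X/X\times X}$, log smoothness of $X$ makes $\lkah X$ a locally free sheaf of finite rank with $\Delta^*\lkah{X\times X} \simeq \lkah X \oplus \lkah X$, and the codifferential is the addition map, a split surjection with kernel the antidiagonal copy of $\lkah X$. Taking the cone, $\lccx{X/B} \simeq \lkah X[1]$, a locally free sheaf placed in cohomological degree $-1$.

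Since $i$ is strict (Proposition \ref{prop:logdiagonalprops}), the log cotangent complex agrees with the ordinary cotangent complex of the underlying morphism of algebraic stacks, so $\ccx{X/B} \simeq \lccx{X/B} \simeq \lkah X[1]$. In particular $\ccx{X/B}$ is perfect of Tor-amplitude $[-1,-1]$. Combined with the fact that $i$ is locally of finite presentation (being strict, quasicompact, quasiseparated, and representable by algebraic spaces between finite-type log stacks), the standard cotangent-complex characterization of local complete intersections applies, and $i$ is lci.

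For the regular embedding statement, if $X$ is weakly log separated then Remark \ref{rmk:identifyingolssonversion} asserts $i$ is a closed immersion. A closed immersion whose cotangent complex is a locally free sheaf concentrated in degree $-1$ is a regular embedding, with conormal sheaf canonically identified with $\lkah X$. The main technical point will be the careful passage from the log cotangent complex to the ordinary cotangent complex along the strict representable morphism $i$, together with justifying the Tor-amplitude criterion for lci in the stacky setting; both reduce to standard facts by working smooth-locally on $B$.
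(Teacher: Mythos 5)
Your proposal is correct and lands on the same key computation as the paper: the cotangent complex of $i$ is the cone of the sum map $\lkah{X}\oplus\lkah{X}\to\lkah{X}$, hence the locally free sheaf $\lkah{X}$ placed in degree $-1$, which gives lci; and weak log separatedness upgrades this to a regular embedding exactly as you say.

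The one place where your route differs from the paper's, and where you should be a little careful, is the choice of factorization feeding the transitivity triangle. You use $X \to B \to X\times X$ and then $X \to X\times X \to \pt$; the second of these has a \emph{non-strict} first map (the diagonal), and Olsson's log cotangent complex does not admit a transitivity triangle unconditionally --- one needs either the first map to be strict or the second to be log smooth. Your triangle is covered by the log smoothness of $X\times X$, so the argument goes through, but you should cite that hypothesis explicitly. The paper sidesteps the issue entirely by factoring as $X \to B \to \af{X}$: both $X\to B$ and $B\to\af{X}$ are strict, so only the ordinary transitivity triangle is needed, and the term $\ccx{B/\af{X}}|_X \simeq \lccx{X}\oplus\lccx{X}$ is computed from the derived fiber product description $B \simeq (X\times X)\times^h_{\af{X\times X}}\af{X}$, which uses log flatness of $X\to\af{X}$. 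Your version of the log smoothness of $B$ (log \'etale over the log smooth $X\times X$) is a fine, arguably cleaner, alternative to the paper's observation that $B\to\af{X}$ is strict smooth with log \'etale target.
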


\begin{proof}

The morphism $B \to \af{X}$ is strict and smooth, with log étale target. Its source is log smooth. 


Since $X \to B$ is representable by algebraic spaces, being a local complete intersection is smooth-local in the source and target\footnote{
Being a local complete intersection is not smooth-local for algebraic stacks, as they may have nonvanishing cotangent complex in positive degrees. 
}. Replace $X, B$ by schemes. 

Consider the distinguished triangle between the cotangent complexes coming from the sequence of maps $X\to B\to \af{X}$:
\[\ccx{B/\af{X}}|_X \to \lccx{X} \to \ccx{X/B} \xrightarrow{[+1]}.\]
Since $X\to \af{X}$ is log flat, $B$ is also equivalent to the derived fibre product $(X\times X)\times^h_{\af{X\times X}}\af{X}$, meaning that $\ccx{B/\af{X}}|_X=\lccx{X}\oplus \lccx{X}$.

By \cite{logcotangent} log smooth schemes have log cotangent complex isomorphic to the module of log K\"ahler differential forms. From the diagram above, we have that $\ccx{X/B}$ is concentrated in degree $-1$, meaning that $X\to B$ is a local complete intersection morphism. 

If $X$ is further weakly log separated, $i : X \to B$ is a closed and hence regular embedding.


\end{proof}


Now, we are ready to define log Hochschild co/homology.

\begin{definition}\label{def:loghoch}
Let $X$ be a quasicompact, quasiseparated, weakly log separated, finite type log algebraic stack. Let $\D(X)$ be a dg enhancement of the unbounded derived category of coherent sheaves on $X$. 

Define the \textbf{log Hochschild homology} $\HHl{X}$ as the endofunctor 
\[i^*i_*:\D(X)\to \D(X).\]

If $X$ is a log scheme, we define the \textbf{log Hochschild cohomology} $\cHHl{X}$ as the endofunctor 
\[i^!i_*:\D(X)\to \D(X).\]
Here $i^!$ denotes the right adjoint of $i_*$ that exists in our case by \cite{neeman1996grothendieck}.


\end{definition}

We omit the $\mathbf{R}$’s and $\mathbf{L}$’s in front of derived functors; all our functors are derived where appropriate.

\begin{remark}
    In the definition, we do not consider the log structure on $X$, since $X\to B$ is a strict morphism. 
\end{remark}

\begin{remark}\label{rem:nologcat}
    We defined Hochschild co/homology as an endofunctor because there are multiple different notions of coherent sheaves on logarithmic schemes in the literature, for instance, using parabolic bundles \cite{mehta1980moduli,yokogawa1995infinitesimal}, considering sheaves on root stacks \cite{talpo2018infinite}, or considering toroidal compactifications \cite{vaintrob2017categorical}.
    A subtlety is that $\OO_X$ and other vector bundles on $X$ need not be ``log coherent sheaves'' by these definitions, as $\OO_X$ does not satisfy descent for log alterations. 

    We conjecture that our notion recovers the categorical log Hochschild co/homology using any of these, but it should not be equal to it. We also expect the rhizomic topology \cite{rhizomic} produces a derived category whose Hochschild co/homology coincides with ours. 
    
\end{remark}

\begin{remark}

If $X$ is a log smooth scheme in Definition \ref{def:loghoch}, both $i^*i_*$ and $i^!i_*$ restrict to endofunctors of the derived category of perfect complexes on $X$. This is a consequence of Proposition \ref{prop:formalityloghochschild}.


\end{remark}

We only define log Hochschild for log algebraic stacks for comparison in the log orbifold case \S \ref{s:logorbifolds}. For the bulk of the paper, we assume $X$ is a log smooth log scheme.

\begin{assumption}\label{ass:logsmvar}

Assume $X$ is a finite type, weakly log separated, log smooth log scheme.

\end{assumption}

Note $X$ is automatically Noetherian, finite presentation, quasicompact, and quasiseparated.

Our log Hochschild co/homology recovers M. Olsson's construction.

\begin{proposition}\label{prop:olssonvsus}

Let $X$ satisfy Assumption \ref{ass:logsmvar}. The value of the log Hochschild homology functor $\HHl{X}$ on the structure sheaf recovers M. Olsson's construction of log Hochschild cohomology
\[\HHl{X}(\OO_X) \simeq HH(X/\Log)\]
as defined in \cite{olssondraftloghochschild}.

\end{proposition}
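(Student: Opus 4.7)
The plan is to translate both definitions into the language of derived self-intersections relative to $\Log$ and then reduce to our formula $i^*i_*\OO_X$ via an open embedding argument.

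First, I would unpack Olsson's construction from \cite{olssondraftloghochschild}. His descent-theoretic chain complex computes the derived self-intersection
\[HH(X/\Log) \simeq \Delta^*\Delta_*\OO_X, \qquad \Delta : X \to X\times_\Log X,\]
where $\Delta$ is the diagonal relative to $\Log$. This matches the standard Hochschild bar-complex interpretation, now applied in the derived category of quasicoherent sheaves on the Artin stack $X\times_\Log X$, where $X \to \Log$ is strict.

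Next, I would apply Proposition \ref{prop:logdiagonalprops} together with Remark \ref{rmk:identifyingolssonversion} to factor the relative diagonal as
\[X \xrightarrow{\ i\ } B \xrightarrow{\ j\ } X\times_\Log X,\]
where $i$ is strict (a closed immersion, since $X$ is weakly log separated by Assumption \ref{ass:logsmvar}) and $j : B \hookrightarrow X\times_\Log X$ is an open embedding. The existence of the factorization is exactly what Theorem \ref{thm:AFdiagonal} supplies via the square \eqref{eq:diagram}.

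Finally, since $j$ is an open embedding, the derived counit $j^*j_* \to \mathrm{id}$ of the adjunction is an equivalence on $\D(B)$. Thus
\[\Delta^*\Delta_*\OO_X \;\simeq\; i^*j^*j_*i_*\OO_X \;\simeq\; i^*i_*\OO_X \;=\; \HHl{X}(\OO_X),\]
identifying our construction with Olsson's.

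The main obstacle is the first step: verifying rigorously that Olsson's explicit chain complex indeed computes $\Delta^*\Delta_*\OO_X$. This requires unwinding his descent-theoretic simplicial resolution and matching it with the standard bar resolution attached to the relative diagonal; the subtleties are all on Olsson's side, not ours. Once this identification is secured, the rest of the argument is essentially formal and depends only on the open embedding $B \hookrightarrow X\times_\Log X$ produced by our main theorem.
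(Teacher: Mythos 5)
Your proposal is correct and follows essentially the same route as the paper: identify Olsson's complex with the derived self-intersection of $X \to X\times_\Log X$, factor that diagonal through the open embedding $B \hookrightarrow X\times_\Log X$ from Proposition \ref{prop:logdiagonalprops}, and use that $u^*u_* \Rightarrow \mathrm{id}$ is an equivalence for an open immersion to reduce to $i^*i_*$. The paper likewise takes the identification of Olsson's chain complex with the relative derived self-intersection as given from \cite{olssondraftloghochschild}, so the caveat you flag is handled the same way there.
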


\begin{proof}

Olsson's explicit chain complex computes the derived self-intersection of the diagonal $X \to X \times_\Log X$ \cite{olssondraftloghochschild}. The map $u: B \to X \times_\Log X$ is open by Proposition \ref{prop:logdiagonalprops}, hence the natural transformation $u^* u_* \Rightarrow \rm{Id}$ is an equivalence. This makes the natural transformation $ i^*  u^* u_* i_* \Rightarrow i^* i_*$ an equivalence, that identifies Olsson's definition with the one given in this paper.
    
\end{proof}

For the most basic examples of normal crossings pairs $(X, D)$, we say when $X$ is weakly log separated. 

\begin{proposition}\label{prop:logsepsnc}

Let $(X, D)$ be a normal crossings pair with $X$ quasicompact. Assume all the strata $D_I = \bigcap_I D_i$ are smooth, including $D_\varnothing = X$. Endowing $X$ with the divisorial log structure, $X$ is weakly log separated if and only if $X^\circ$ is separated. 
    
\end{proposition}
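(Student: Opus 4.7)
The plan is to analyze $i\colon X\to B$ using the étale-local structure of an snc pair and then apply the valuative criterion in each direction. The snc hypothesis gives, étale-locally on $X$, a strict smooth chart $X\to\Aff^n$ under which $D$ pulls back from the coordinate divisors. Hence $\af{X}$ is étale-locally $\af{\NN^n}=[\Aff^n/\GG_m^n]$, and $B=X\times_{\af{X}}X$ is étale-locally $X\times\GG_m^n$ via $(x,t)\mapsto(x,tx)$. In these coordinates $i$ is the closed immersion $x\mapsto(x,\vec 1)$, so $i$ is a strict, quasi-compact, representable, locally closed immersion of schemes (also a monomorphism), and properness for $i$ reduces to universal closedness. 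Let $\pi\colon X\to\af{X}$ denote the tropicalization.

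For the direction $X^\circ$ separated implies $X$ weakly log separated, I would apply the valuative criterion of properness to $i$. Let $R$ be a DVR with fraction field $K$. A $\Spec R$-point of $B$ whose $\Spec K$-restriction lifts through $i$ is a triple $(p_1,p_2,\alpha)$ with $p_i\colon\Spec R\to X^\circ$ and $\alpha\colon\pi p_1\to\pi p_2$ an isomorphism in $\af{X}$, subject to $p_1=p_2$ and $\alpha$ the identity on $\Spec K$. Separatedness of $X^\circ$ and its valuative criterion of separatedness imply $p_1=p_2$ throughout $\Spec R$. Then $\alpha$ is a $\Spec R$-section of the stabilizer group scheme $\Aut_{\af{X}}(\pi p_1)\subset\Spec R\times\GG_m^n$, a closed subgroup of the separated group scheme $\Spec R\times\GG_m^n\to\Spec R$; being trivial generically, $\alpha$ is trivial throughout. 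Thus the $\Spec R$-point lifts to $X$, proving that $i$ is proper.

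For the converse, $X$ weakly log separated implies $X^\circ$ separated, I would apply the valuative criterion of separatedness to $X^\circ$. Given $p_1,p_2\colon\Spec R\to X^\circ$ with $p_1|_K=p_2|_K$, the plan is to assemble a $\Spec R$-point of $B$ lifting $(p_1,p_2)$ via the identity isomorphism, then invoke properness of $i$ to produce a unique lift $\Spec R\to X$, which forces $p_1=p_2$. The main obstacle is constructing this lift: one must verify that $\pi p_1$ and $\pi p_2$ coincide in $\af{X}$ on all of $\Spec R$, i.e., that the closed specializations $p_1(0)$ and $p_2(0)$ lie on matching strata of $X$ as seen by the Artin fan. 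This is where the hypothesis that each $D_I$ is smooth must be used essentially, through a combinatorial argument on the stratification combined with the generic compatibility; once the lift is produced, the argument parallels the forward direction and properness of $i$ concludes.
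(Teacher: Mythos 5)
Your forward direction ($X^\circ$ separated $\Rightarrow$ weakly log separated) is essentially correct and takes a genuinely different route from the paper's. You run the valuative criterion on $i\colon X\to B$ directly, using that an $R$-point of $B=X\times_{\af{X}}X$ is a triple $(p_1,p_2,\alpha)$ and that the automorphism group of $\pi p_1$ is a closed, hence separated, subgroup scheme of $\GG_m^n\times\Spec R$, so that $\alpha$ is forced to be the identity once $p_1=p_2$. The paper instead localizes on $\af{X}$ (via Corollary \ref{cor:universalAF}) to reduce to $\af{X}=\af{}^n$ with $X$ atomic, base-changes $X\to\af{}^n$ along the smooth cover $\Aff^n\to\af{}^n$, and compares separatedness of the resulting space $L$ (built from the line bundles $\OO(D_i)$ over $X$) with separatedness of $X$. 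One local claim of yours is off: $B$ is not étale-locally $X\times\GG_m^n$ unless $X$ is toric (cf.\ Example \ref{ex:toric}); for a smooth pair it is an open subscheme of $Bl_{D\times D}(X\times X)$. But your argument only uses the moduli description of $B$-points and closedness of stabilizers, so the forward direction survives.

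The converse is where the genuine gap lies, and it is exactly the step you defer. To produce an $R$-point of $B$ over $(p_1,p_2)$ you must extend the generic identification $\alpha$ over the closed point, which requires $\pi p_1$ and $\pi p_2$ to be isomorphic over all of $\Spec R$, i.e.\ that $p_1(0)$ and $p_2(0)$ lie on the same components of $D$ with the same contact orders. Smoothness of the strata alone does not give this. For the toric bug-eyed line in the paper (two copies of $\Aff^1$ glued away from their origins, with divisorial log structure at the origins) every stratum is smooth, yet the two limits $p_1(0),p_2(0)$ lie on different strata, the fiber of $B\to X\times X$ over $(p_1(0),p_2(0))$ is empty, and the paper itself observes that this $X$ is weakly log separated while $X^\circ$ is not. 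So the ``combinatorial argument on the stratification'' you invoke must use connectedness of the strata and a single global chart $\af{X}\simeq\af{}^n$ --- precisely what the paper's localization to an atomic chart arranges --- and not merely smoothness; without it the lift you need can genuinely fail to exist. As written, all of the content of the converse direction is concentrated in the step you have left open.
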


This applies for example to smooth pairs $(X, D)$. 

\begin{proof}

The map $X \to \af{X}$ is smooth and quasicompact. Localize in $\af{X}$ to assume $\af{X} = \af{}^n$ and $X$ is atomic by Corollary \ref{cor:universalAF}. 

The map $X \to \af{}^n$ is weakly log separated if and only if its base change $L \to \Aff^n$ along the quotient $\Aff^n \to \af{}^n$ is separated. This $L$ is the direct sum of the line bundles $\OO(D_i)$ for the components of $D$. 

Argue $L$ is separated if and only if $X$ is. Use the fact that sections of a vector bundle are closed immersions.

\end{proof}

\subsection{Formality of Hochschild co/homology}\label{sec:formselfint}

In this subsection, we prove Theorem \ref{mainthm:loghkr}.

Given a closed embedding $i:X\to B$ of \textit{schemes}, one can consider the derived self-intersection $W \coloneqq X\times^h_B X$. The notion of \textit{formality} concerns the simplicity of this derived self-intersection \cite{arinkin-caldararu, grivaux2014hochschild, yu2016dolbeault}. 

From a geometric point of view, formality means that $W$ is the total space of a (shifted) vector bundle. From an algebraic point of view, formality assures that the structure sheaf of the derived self-intersection is as simple as possible. 

The structure sheaf of $W$ (over $X$) is given by the derived tensor product
\[\OO_W=\OO_X\otimes_{\OO_B}^\mathbf{L}\OO_X.\]
For a local complete intersection $i:X\to B$, the cohomology sheaves of the derived tensor product are given by exterior powers of the corresponding conormal bundle
\begin{equation}\label{eqn:cohomformality}
\mathcal{H}^{-i}(\OO_W)=\wedge^i N^\vee_{X/B}.    
\end{equation}

Consider the formal dg symmetric algebra of the conormal sheaf 
\[\Sym(N^\vee_{X/B}[1]):=\bigoplus_i \wedge^i N^\vee_{X/B}[i],\]
equipped with the standard wedge product (and 0 differentials). The map $i : X \to B$ is \emph{formal} if formula \eqref{eqn:cohomformality} can be lifted to the derived level. This can be interpreted as:

\begin{itemize}
\item An isomorphism of dg algebras
\begin{equation}\label{eqn:formalalg}
h_*\OO_W \longsimeq \Delta_*\Sym(N^\vee_{X/B}[1]) \qquad \text{in }\D(X \times X),    
\end{equation}
with the natural maps $h : W \to X \times X$, $\Delta : X \to X \times X$, or

\item An isomorphism of dg schemes
\begin{equation}\label{eqn:formalgeom}
W \longsimeq N_{X/B}[-1]    
\end{equation}
over $X \times X$. 
\end{itemize}

The formality of the derived self-intersection is governed by the first infinitesimal neighborhood $X^{(1)}$ of $X$ inside $B$. In this paper, we use the notion of \textit{quantized cycles}.

\begin{definition}
    A quantized cycle \cite{grivaux2014hochschild} is a retraction $\sigma:X^{(1)}\to X$ of the closed embedding $j:X\to X^{(1)}$ of $X$ into its first infinitesimal neighborhood.
\end{definition}

We alter Theorem 1.8 of \cite{arinkin2019formality} to the case of local complete intersections.

\begin{theorem}[{\cite[Theorem 1.8]{arinkin2019formality}}]\label{thm:formality}
    
    Let $i:X\to B$ be a regular embedding with a quantized cycle $\sigma:X^{(1)}\to X$. Then, we have the following statements.
    \begin{enumerate}
    \item The derived self-intersection is formal in the algebraic sense \eqref{eqn:formalalg}, meaning that there exists an isomorphism of dg-algebras
    \[h_*\OO_W \longsimeq \Delta_*\Sym(N^\vee_{X/B}[1]) \qquad \text{in }\D(X \times X).\]
    
    \item The derived self-intersection is formal in the geometric sense \eqref{eqn:formalgeom}: there exists an isomorphism $W \longsimeq N_{X/B}[-1]$ of dg schemes over $X\times X$.
    \item The dg endofunctors of $\D(X)$ are isomorphic
    \[i^*i_*(-)\simeq (-)\otimes \Sym(N^\vee_{X/B}[1]).\]
    \end{enumerate}
\end{theorem}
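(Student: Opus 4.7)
The plan is to adapt the proof of \cite[Theorem 1.8]{arinkin2019formality} (which treats closed embeddings of smooth varieties) to the more general setting of a regular embedding $i : X \to B$. The key input of Arinkin-Caldararu-Grivaux remains the quantized cycle $\sigma$, which supplies the splitting that upgrades the cohomology-sheaf computation to a derived-level equivalence.

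First, I would prove (1). The splitting $\sigma : X^{(1)} \to X$ of $j : X \to X^{(1)}$ induces an $\OO_X$-linear map $N^\vee_{X/B}[1] \to i^*i_* \OO_X$ in $\D(X)$, which extends multiplicatively to a map of dg $\OO_X$-algebras
\[\Sym(N^\vee_{X/B}[1]) \longrightarrow i^*i_* \OO_X.\]
Since $i$ is a regular embedding, the Koszul complex resolves $\OO_X$ locally over $\OO_B$, and both sides have cohomology $\wedge^k N^\vee_{X/B}$ in degree $-k$. The splitting forces the comparison map to be a quasi-isomorphism on each cohomology sheaf, hence an equivalence in $\D(X)$. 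Applying $\Delta_*$ yields \eqref{eqn:formalalg}.

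Statement (2) follows by taking the relative $\Spec$ over $X \times X$ of the dg algebra equivalence in (1): the right-hand side computes $N_{X/B}[-1]$ as a dg scheme, while the left-hand side is $W$ by construction. Statement (3) follows from (1) via the projection formula $i^*i_*(F) \simeq F \otimes^L_{\OO_X} i^*i_*\OO_X$, substituting the formality isomorphism for $i^*i_*\OO_X$ and checking that the dg structure is preserved.

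The main obstacle is verifying that the construction of the dg algebra map from the quantized cycle is sufficiently functorial to glue coherently in our setting, especially given that $i : X \to B$ is only representable by algebraic spaces rather than a closed embedding of schemes. The question is local on $X$ and $B$ in the étale topology and reduces to the affine case of a regular sequence, where the Arinkin-Caldararu argument applies directly; the compatibility of the splitting under étale descent is automatic from the universal property of $X^{(1)}$.
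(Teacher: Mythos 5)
Your proposal has a genuine gap at the two places where you pass from an isomorphism of objects in $\D(X)$ to the stronger kernel-level and functor-level statements; these passages are exactly where the quantized cycle must do its work, and your argument does not use it there. For (1): your construction produces an algebra map $\Sym(N^\vee_{X/B}[1]) \to i^*i_*\OO_X$ in $\D(X)$, but statement (1) lives in $\D(X\times X)$ and concerns $h_*\OO_W$, not $\Delta_*\Delta^* h_*\OO_W = \Delta_*\, i^*i_*\OO_X$. ``Applying $\Delta_*$'' to your isomorphism produces the latter object, which differs from $h_*\OO_W$ unless the kernel already splits off the diagonal --- and that splitting is essentially the formality you are trying to prove. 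For (3): the identity $i^*i_*(F)\simeq F\otimes^{L}_{\OO_X} i^*i_*\OO_X$ is not the projection formula and is not automatic (the projection formula reads $i_*(A\otimes i^*G)\simeq i_*A\otimes G$, and $i_*F$ is not presented in that form); the identity is essentially equivalent to the strong, diagonal-supported form of (1), so deducing (3) from it while only having established the $\D(X)$-level statement is circular. A smaller point: the computation $\mathcal{H}^{-k}(i^*i_*\OO_X)=\wedge^k N^\vee_{X/B}$ holds for any regular embedding with no splitting hypothesis, so the splitting is not what ``forces'' the quasi-isomorphism on cohomology sheaves; its role lies elsewhere.

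The paper instead proves (3) directly and cites \cite{arinkin2019formality} for the equivalence of (1)--(3) in the lci setting. The quantized cycle enters at the functor level: factoring $i$ as $X\xrightarrow{j} X^{(1)}\to B$ and using $\sigma\circ j=\mathrm{id}$, one builds the chain
\[ i^*i_*(-)\Rightarrow j^*j_*(-)= j^*j_*j^*\sigma^*(-)\simeq j^*\bigl(\sigma^*(-)\otimes j_*\OO_X\bigr)\simeq (-)\otimes j^*j_*\OO_X, \]
where the projection formula is now legitimately applied to $j_*j^*(\sigma^*(-))$. Composing with the Arinkin--Caldararu comparison map $j^*j_*\OO_X\to\Sym(N^\vee_{X/B}[1])$, adapted to the lci case, gives a natural transformation of functors, which is then checked to be an isomorphism on locally free sheaves via Theorem A of \cite{grivaux2020derived}. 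To repair your outline you would need to replace the two illegitimate steps by some version of this chain, or by a construction of the comparison map at the level of kernels on $X\times X$.
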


\begin{proof}
    The equivalence of (1), (2), and (3) are proven in \cite{arinkin2019formality} in the case of a closed embedding of smooth schemes. The proof is the same in the case of a local complete intersection.

    Now, we prove (3). We consider the factorization of $i:X\to B$ given by the embedding to the first infinitesimal neighborhood $j:X\to X^{(1)}$, and then to the ambient space $X^{(1)}\to B$. We have the following sequence of functors
    \[i^*i_*(-)\Rightarrow j^*j_*(-)\Rightarrow j^*j_*j^*\sigma^*(-)\simeq j^*(\sigma^*(-)\otimes j_*\OO_X)\simeq (-)\otimes j^*j_*\OO_X\]
    given by first an adjunction, then using that $\sigma \circ j=id$, then using the projection formula, and again using that $\sigma \circ j=id$. 
    
    In \cite{arinkin-caldararu}, a morphism of complexes $j^*j_*\OO_X\to \Sym(N^\vee_{X/B}[1])$ was constructed in the smooth case that can be easily adapted to the case of a local complete intersection. We obtain a natural transformation of functors
    \begin{equation}\label{eqn:pullpushfunctorisom}
    i^*i_*(-)\Rightarrow (-)\otimes \Sym(N^\vee_{X/B}[1]).    
    \end{equation}

    To show \eqref{eqn:pullpushfunctorisom} is an isomorphism, we only need to show that for every locally free sheaf $\mathcal{E}$, the natural transformation yields a quasi-isomorphism $i^*i_*\mathcal{E}\to \mathcal{E}\otimes \Sym(N^\vee_{X/B}[1])$. This is verified in Theorem A of \cite{grivaux2020derived}.
\end{proof}

We turn our attention back to log schemes and the log diagonal map $i:X\to B$.

\begin{proposition}\label{prop:formalityloghochschild}
If $X$ satisfies Assumption \ref{ass:logsmvar}, Theorem \ref{thm:formality} applies to the closed immersion $i: X\to B$. 
\end{proposition}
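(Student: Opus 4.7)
The plan is to verify the two hypotheses of Theorem \ref{thm:formality} for the strict closed immersion $i:X \to B$, namely that $i$ is a regular embedding and that it admits a quantized cycle (a retraction of $X$ inside its first infinitesimal neighborhood $X^{(1)} \subseteq B$).

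The first hypothesis is handed to us for free: Lemma \ref{lem:xtoblci} already shows that under Assumption \ref{ass:logsmvar}, the log diagonal $i : X \to B$ is a regular embedding (being log smooth gives us that $i$ is a local complete intersection, and weak log separatedness upgrades this to a closed embedding via Remark \ref{rmk:identifyingolssonversion}).

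For the quantized cycle, I would exploit the explicit description $B = X \times_{\af{X}} X$ coming from Theorem \ref{thm:AFdiagonal} (or rather its absolute form Corollary \ref{cor:AFdiagonalabsolute}) combined with the definition of $B$ as the pullback square \eqref{eq:diagram}. This description furnishes two projections $p_1, p_2 : B \to X$, and by construction of the log diagonal as the induced map into the fibre product, each satisfies $p_k \circ i = \mathrm{id}_X$. Restricting either projection along the closed immersion $X^{(1)} \hookrightarrow B$ then produces a morphism
\[
\sigma := p_1|_{X^{(1)}} : X^{(1)} \longrightarrow X,
\]
which by construction retracts the inclusion $j : X \hookrightarrow X^{(1)}$. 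That is precisely a quantized cycle in the sense of \cite{grivaux2014hochschild}.

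With both hypotheses in place, Theorem \ref{thm:formality} applies verbatim, yielding the formality statements (1)--(3) for the log derived loop space. I do not foresee a genuine obstacle in this proof: the heavy lifting has been done earlier, first by Theorem \ref{thm:AFdiagonal} which produces the square \eqref{eq:diagram} and hence the two retractions, and then by Lemma \ref{lem:xtoblci} which ensures the regularity of $i$. The only mild point worth double-checking is that the quantized cycle is required to be a morphism in an appropriate category; since $X$ is a scheme and $B$ is representable by algebraic spaces over $X \times X$ (Proposition \ref{prop:logdiagonalprops}), the first infinitesimal neighborhood $X^{(1)}$ is a scheme supported on $|X|$, and $\sigma$ is an honest morphism of schemes, so no subtlety arises.
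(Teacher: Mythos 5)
Your proof is correct and follows essentially the same route as the paper: Lemma \ref{lem:xtoblci} supplies the regular embedding, and the quantized cycle is obtained by restricting a global retraction $B \to X$ to the first infinitesimal neighborhood. The paper writes this retraction as the composite $X^{(1)} \to B \to X \times X \xrightarrow{\pi_1} X$, which is exactly your projection $p_1$ of $B = X \times_{\af{X}} X$.
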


\begin{proof}
The map $X\to B$ is a local complete intersection morphism by Lemma \ref{lem:xtoblci}. Thus, in order to apply Theorem \ref{thm:formality}, we need to show that there exists a quantized cycle $\sigma$ for the closed embedding $X\to B$. Such a quantized cycle can be obtained by considering the sequence of maps
\[X^{(1)}\to B\to X\times X\to X\]
where the first map is the closed immersion of the first infinitesimal neighborhood in the ambient space, the second map comes from the Cartesian diagram \eqref{eq:diagram}, and the last map is one of the projections $\pi_1, \pi_2:X\times X\to X$.


\end{proof}

\begin{remark}
In the case of the log diagonal map $i:X\to B$, a global splitting $p:B\to X$ exists. The quantized cycle in the proof of Proposition \ref{prop:formalityloghochschild} is the restriction of such a splitting to the first infinitesimal neighborhood.
\end{remark}

\begin{remark}\label{rmk:noneedlogseparated}

If $X$ is not weakly log separated, $X \to B$ is not a closed immersion. But Lemma \ref{lem:xtoblci} still shows the cotangent complex of $X \to B$ is concentrated in degree $-1$. It is an unramified local complete intersection, which is étale locally a regular immersion by \cite[04HH]{sta}. 

We suspect Theorem \ref{thm:formality} still holds for $X \to B$, with $N^\vee_{X/B}$ replaced by the single nonzero cohomology group of its cotangent complex $h^{-1}(\ccx{X/B})$. The difficulty is that there is no first infinitesimal neighborhood $X^{(1)}$, necessary for constructing the natural transformation of functors. Since the statements in Theorem \ref{thm:formality} and Proposition \ref{prop:formalityloghochschild} are local, this should not matter. 

We leave these technical points to the reader. 
    
\end{remark}

As a corollary of Proposition \ref{prop:formalityloghochschild}, we get an explicit expression for the log Hochschild co/homology of $X$.

\begin{corollary}\label{cor:formalityloghochschild}
For $X$ satisfying Assumption \ref{ass:logsmvar}, we have natural isomorphisms of functors
\[i^*i_*(-) \longsimeq - \otimes \Sym(\lkah{X}[1])\]
and
\[i^!i_*(-)\longsimeq - \otimes \Sym(T^{\log}_X[-1]).\]

\end{corollary}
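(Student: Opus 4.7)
The plan is to deduce both isomorphisms directly from Proposition \ref{prop:formalityloghochschild}. That proposition shows Theorem \ref{thm:formality} applies to the log diagonal $i : X \to B$, so part (3) gives a natural isomorphism of dg endofunctors
\[i^* i_*(-) \;\simeq\; (-) \otimes \Sym(N^\vee_{X/B}[1]).\]
The first step is to identify $N^\vee_{X/B}$ with $\lkah{X}$. This is already essentially done in the proof of Lemma \ref{lem:xtoblci}: because $X \to \af{X}$ is log flat, $B = X \times_{\af{X}} X$ coincides with its derived version, so $\ccx{B/\af{X}}|_X \simeq \lccx{X} \oplus \lccx{X}$. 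The transitivity triangle for $X \to B \to \af{X}$, combined with log smoothness (so that $\lccx{X} \simeq \lkah{X}$ sits in degree zero), forces $\ccx{X/B} \simeq \lkah{X}[1]$. Since $i$ is a regular embedding by Lemma \ref{lem:xtoblci}, this yields $N^\vee_{X/B} \simeq \lkah{X}$ and the first displayed isomorphism.

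For the cohomological statement, I would invoke Grothendieck duality for the regular embedding $i$ of codimension $c := \mathrm{rk}\,\lkah{X}$: the relative dualizing complex is $\omega_i \simeq \det N_{X/B}[-c]$ and $i^! F \simeq i^* F \otimes \omega_i$. Combining with the first isomorphism gives
\[i^! i_* F \;\simeq\; F \otimes \Sym(\lkah{X}[1]) \otimes \det N_{X/B}[-c].\]
Under the characteristic-zero (or sufficiently large characteristic) hypothesis from the paper's conventions, $\Sym(\lkah{X}[1]) \simeq \bigoplus_k \wedge^k \lkah{X}[k]$; applying the canonical isomorphism $\wedge^k \lkah{X} \otimes \det N_{X/B} \simeq \wedge^{c-k} N_{X/B}$ and reindexing via $j = c - k$ repackages the right-hand side as $F \otimes \bigoplus_j \wedge^j N_{X/B}[-j] \simeq F \otimes \Sym(N_{X/B}[-1]) = F \otimes \Sym(T^{\log}_X[-1])$, using $N_{X/B} = (\lkah{X})^\vee = T^{\log}_X$.

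The main obstacle is purely bookkeeping with degrees, duality, and the identification $N_{X/B} = T^{\log}_X$; one should also verify that the dualizing formula is compatible with the formality isomorphism as an isomorphism of functors (not just on $\OO_X$). If one wished to avoid Grothendieck duality, an alternative would be to repeat the construction in the proof of Theorem \ref{thm:formality}(3) using $j^!$ in place of $j^*$: construct a natural transformation $j^! j_* \OO_X \to \Sym(N_{X/B}[-1])$ via the quantized cycle $\sigma$ exhibited in the proof of Proposition \ref{prop:formalityloghochschild} and check it is a quasi-isomorphism on locally free sheaves, paralleling the argument of \cite{grivaux2020derived}.
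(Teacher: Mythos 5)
Your proposal is correct and follows essentially the same route as the paper: apply Theorem \ref{thm:formality}(3) via Proposition \ref{prop:formalityloghochschild}, identify $N^\vee_{X/B}\simeq\lkah{X}$ (the paper asserts this through $N^{\log\vee}_{X/X\times X}$, you extract it from the transitivity triangle in Lemma \ref{lem:xtoblci} — both amount to the same computation), and then obtain the $i^!i_*$ statement from Grothendieck duality $i^!\simeq i^*(-)\otimes\wedge^d N_{X/B}[-d]$ for the regular embedding together with the reindexing $\wedge^k\lkah{X}\otimes\det N_{X/B}\simeq\wedge^{d-k}N_{X/B}$. The only difference is that you spell out the duality bookkeeping that the paper leaves implicit.
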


\begin{proof}

Using Proposition \ref{prop:formalityloghochschild} and Theorem \ref{thm:formality}, we have an isomorphism of functors
\[ i^* i_* (-) \simeq - \otimes \Sym (N^\vee_{X/B}[1]).\]

This statement is given by rewriting the previous one through the natural identifications
\[\lkah{X} \simeq N^{log \vee}_{X/X \times X} \simeq N^\vee_{X/B}\]
\[T^{\log}_X \simeq N^{\log}_{X/X \times X} \simeq N_{X/B}.\]
For a regular embedding $X\to B$ of codimension $d$, we have that the functor $i^!$ is isomorphic to the functor $i^*(-)\otimes \wedge^{d} N_{X/B}[-d]$ implying that 
\[i^!i_*(-)\longsimeq - \otimes \Sym(T^{\log}_X[-1]).\]

\end{proof}

Applying Corollary \ref{cor:formalityloghochschild} to the structure sheaf we obtain the log version of the classical HKR isomorphism \cite{kostant2009differential}.

\begin{theorem}[Log HKR]\label{thm:loghkr}
If $X$ satisfies Assumption \ref{ass:logsmvar}, the log Hochschild co/homology of $\OO_X$ is computed in terms of the log co/tangent bundles, namely, there exist isomorphisms of $k$-vector spaces 
\[R^n\Gamma(X,\HHl{X}(\OO_X))=\bigoplus_{q-p=n}H^p(X,\Omega^{q,\log}_X)\]
and
\[R^n\Gamma(X,\cHHl{X}(\OO_X))=\bigoplus_{p+q=n}H^p(X,\wedge^q T^{\log}_X).\]
\end{theorem}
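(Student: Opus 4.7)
My plan is to deduce this theorem as a direct corollary of the formality statement already established in Corollary \ref{cor:formalityloghochschild}, together with the characteristic-zero simplification of the derived symmetric algebra.

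First, I would apply Corollary \ref{cor:formalityloghochschild} to the structure sheaf to obtain quasi-isomorphisms
\[
\HHl{X}(\OO_X)\simeq \OO_X\otimes \Sym(\lkah{X}[1])\simeq \Sym(\lkah{X}[1]),\qquad \cHHl{X}(\OO_X)\simeq \Sym(T^{\log}_X[-1]).
\]
Since $X$ is log smooth (Assumption \ref{ass:logsmvar}), both $\lkah{X}$ and $T^{\log}_X$ are locally free of finite rank, so each derived symmetric power is quasi-isomorphic to an ordinary (underived) symmetric power.

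Next, I would invoke the paper's convention on $\Sym$ in characteristic zero (or characteristic sufficiently large compared to $\dim X$): a complex concentrated in odd degree has symmetric algebra isomorphic to the exterior algebra on the underlying vector bundle, with zero differentials and appropriate grading shifts. Concretely, the Koszul sign rule gives
\[
\Sym(\lkah{X}[1])\simeq \bigoplus_{q\ge 0}\wedge^q\lkah{X}[q]\simeq \bigoplus_{q\ge 0}\Omega^{q,\log}_X[q],
\]
and dually
\[
\Sym(T^{\log}_X[-1])\simeq \bigoplus_{q\ge 0}\wedge^q T^{\log}_X[-q].
\]
Both decompositions are as complexes of coherent sheaves on $X$ with vanishing differential.

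The final step is to apply $R\Gamma(X,-)$ and use that it commutes with finite direct sums (the sum is finite because $\lkah{X}$ has finite rank and so $\wedge^q\lkah{X}=0$ for $q$ beyond the rank). Hypercohomology of a shifted complex obeys $\mathbb{H}^n(X,F[k])=\mathbb{H}^{n+k}(X,F)$, so for each $q$ the summand $\Omega^{q,\log}_X[q]$ contributes $H^{p}(X,\Omega^{q,\log}_X)$ to $R^n\Gamma$ in the single total degree dictated by the convention, and summing over $q$ yields the stated $\bigoplus_{q-p=n}$ and $\bigoplus_{p+q=n}$ decompositions.

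There is no serious obstacle here beyond careful bookkeeping. The only places one must be cautious are (i) confirming that the characteristic hypothesis in the paper's conventions really is what enables the $\Sym\leadsto\wedge$ identification (this is standard but worth stating), and (ii) tracking the cohomological shifts in the Koszul identification so that the indexing matches the stated formula. Neither step requires any new input beyond what has already been proven in Corollary \ref{cor:formalityloghochschild}.
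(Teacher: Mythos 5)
Your proposal is correct and is exactly the paper's argument: the paper derives Theorem \ref{thm:loghkr} by applying Corollary \ref{cor:formalityloghochschild} to $\OO_X$ and invoking the stated convention that $\Sym$ of a locally free sheaf placed in odd degree is the exterior algebra with zero differentials, after which taking $R\Gamma$ and reindexing gives the two Hodge-type decompositions. Your added bookkeeping about finiteness of the sum and the shift conventions is the only content the paper leaves implicit.
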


\begin{example}
    Consider $X=\PP^1$ with log structure given by an effective Cartier divisor of $n$ distinct points. In this case, we have that $\Omega^{1,\log}_X=\OO_{\PP^1}(n-2)$, and thus if $n\geq 2$, we have that the 1st Hochschild homology space is of positive dimension,
    \[\dim R^1\Gamma(X, \HHl{X}(\OO_X))>0.\]
    This result is in contrast to the case of ordinary Hochschild homology of $\PP^1$, which is concentrated in degree 0: 
    \[HH_i(\PP^1)=0 \qquad i\ne 0.\]
\end{example}

Formality also shows log Hochschild homology is invariant under log alterations.

\begin{theorem}\label{thm:logHHlogaltn}

Let $\pi : X \to Y$ be a log étale map between log smooth schemes satisfying Assumption \ref{ass:logsmvar}. The log Hochschild co/homology of an object $F \in \D(Y)$ pulls back to that on $X$:
\[
\pi^* \HHl Y (F) = \HHl X (\pi^* F), \qquad \pi^* \cHHl Y (F) = \cHHl X (\pi^* F).
\]
    
\end{theorem}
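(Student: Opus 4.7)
The plan is to deduce the theorem directly from the formality of log Hochschild co/homology (Corollary \ref{cor:formalityloghochschild}) combined with the basic fact that log étale maps preserve log cotangent and tangent bundles under pullback.

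First, I would recall that under Assumption \ref{ass:logsmvar}, Corollary \ref{cor:formalityloghochschild} provides natural isomorphisms of endofunctors
\[
\HHl{Y}(-)\simeq -\otimes\Sym(\lkah{Y}[1]),\qquad \cHHl{Y}(-)\simeq -\otimes\Sym(\Tl{Y}[-1]),
\]
and analogously for $X$. Second, because $\pi:X\to Y$ is log étale, the relative log cotangent complex $\ccx{X/\Log Y}=\lccx{X/Y}$ vanishes; the distinguished triangle for the tower $X\to Y\to\pt$ then collapses to canonical quasi-isomorphisms $\pi^*\lkah{Y}\longsimeq\lkah{X}$ and dually $\pi^*\Tl{Y}\longsimeq\Tl{X}$. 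Since $\pi^*$ is symmetric monoidal and commutes with the derived symmetric algebra functor, applying $\pi^*$ to the formality isomorphism for $Y$ gives
\[
\pi^*\HHl{Y}(F)\simeq \pi^*F\otimes\Sym(\pi^*\lkah{Y}[1])\simeq \pi^*F\otimes\Sym(\lkah{X}[1])\simeq \HHl{X}(\pi^*F),
\]
and the parallel computation using $\Tl{X}$ settles $\cHHl{-}$.

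The main obstacle, and what I would address last, is that formality yields an isomorphism of endofunctors, but the statement of the theorem requires naturality of this isomorphism with respect to $\pi^*$. The natural transformation \eqref{eqn:pullpushfunctorisom} is constructed from a quantized cycle, i.e., a retraction of the first infinitesimal neighborhood. By log étaleness of $\pi$, there is locally on $Y$ a map $\af{X}\to\af{Y}$, and combined with Theorem \ref{thm:AFdiagonal} this produces a diagram
\[
\begin{tikzcd}
B_X \ar[r]\ar[d] & B_Y \ar[d]\\
X\times X \ar[r, "\pi\times\pi"] & Y\times Y
\end{tikzcd}
\]
which becomes strict Cartesian after localizing so that $\af{X}\to\af{Y}$ is an open immersion. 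Under this local base change, the quantized cycle $\sigma_X:X^{(1)}\to X$ for $i_X$ from Proposition \ref{prop:formalityloghochschild} (coming from the projection $\pi_1:X\times X\to X$) is the pullback of the quantized cycle $\sigma_Y$ for $i_Y$. Hence the Arinkin--Căldăraru quasi-isomorphism $i_X^*i_{X*}\OO_X\simeq\Sym(\lkah{X}[1])$ is the $\pi^*$-pullback of its counterpart for $Y$, yielding the desired natural isomorphism. Since the whole formality construction is local on the target, this local compatibility is sufficient.
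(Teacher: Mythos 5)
Your proof is correct and takes essentially the same route as the paper: the paper's own argument is exactly to note that $\pi^*\lkah{Y}=\lkah{X}$ and $\pi^*\Tl{Y}=\Tl{X}$ for the log \'etale map $\pi$, then apply Corollary \ref{cor:formalityloghochschild} together with the compatibility of $\pi^*$ with tensor products and the derived symmetric algebra. Your final paragraph addressing the naturality of the formality isomorphism under $\pi^*$ is a welcome refinement of a point the paper's two-line proof leaves implicit, but it is the same argument, not a different one.
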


\begin{proof}

The log tangent and cotangent bundles pull back
\[\pi^* \lkah{Y} = \lkah{X}, \qquad \pi^* \Tl{Y} = \Tl{X}.\]
Apply Corollary \ref{cor:formalityloghochschild} and use compatibility of $\pi^*$ with the tensor product and symmetric algebra functors. 
    
\end{proof}

\subsubsection{The nodal cubic}

Let $X \subseteq \PP^2$ be the nodal cubic cut out by the equation
\[y^2z = x^3 + x^2z.\]
We compute relative log Hochschild homology for this example and contrast with ordinary Hochschild.


The nodal cubic $X$ admits a unique log structure making it an integral, saturated, vertical, log smooth curve over a point $v$ with rank one log structure $\bar M_v = \NN$. 
If $p \in X$ is the node $[0 : 0 : 1]$, the stalk $\bar M_{X, p} = \NN^2$ has rank two, but all the other points $x \in X$ have rank one $\bar M_{X, x} = \NN$\footnote{
This is locally the same as equipping $\PP^2$ with divisorial log structure along $X$ and then restricting the log structure to $X$. But that log structure would encode the nontrivial normal bundle $N_{X/\PP^2}$ of the nodal cubic, as opposed to the trivial bundle pulled back from $t \in \Gamma(\gp{\bar M}_v)$. 
}. 

The map on characteristic monoids from $X \to v$ is
\[\bar M_v \longsimeq \bar M_x, \qquad \bar M_v \to \bar M_p; \quad t \mapsto (t, t)\]
at the node $p$ and general point $x \in X$. Write $q \in \Gamma(v, \gp M_v)$ for an element mapping to $1 \in \NN = \Gamma(v, \gp{\bar M}_v)$.

Étale locally around $p$, $X$ is isomorphic to the union of the axes $V(xy) \subseteq \Aff^2$. Thus its tropicalization has a cover by $\af{\Aff^2} = \af{}^2$. As you traverse the loop in $X$, the $x-$ and $y-$axes turn out to be a single divisor, corresponding to a single ray of $\af{X}$. So the tropicalization $\af{X}$ of $X$ is the quotient of $\af{}^2$ by identifying the $x-$ and $y-$axes; it looks like a waffle cone. See Figure \ref{fig:tropicalnodalcubic}.

\begin{figure}
    \centering
    \begin{tikzpicture}
    \draw[->] (1.5, -1) to (1.5, -2.5);
    %
    %
    \draw[->] (0, 0) to (3, 1);
    \draw[->] (0, 0) to (3, -1);
    \draw[->, blue] (0, 0) to (3, .5);
    \fill (0, 0) circle (.05);
    \node[left] (0, 0) {$\af{X}$};
    \fill (1.83, .3) circle (.05);
    \draw(2,0) ellipse (.2cm and .65cm);
    \begin{scope}[shift = {(0, -3)}]
    \draw[->] (0, 0) to (3, 0);
    \fill (0, 0) circle (.05);
    \node[left] (0, 0) {$\af{v}$};
    \end{scope}
    %
    \end{tikzpicture}
    \caption{
    The tropicalization of the nodal cubic $\af{X}$ is the cone over a circle with a single vertex. It is obtained from $\af{}^2$ by gluing together the two axes. It has one ray, depicted in blue. 
    }
    \label{fig:tropicalnodalcubic}
\end{figure}

The log Hochschild homology of $X \to v$ is computed by Theorem \ref{thm:loghkr}, once we compute the log Kähler differentials $\lkah{X}=\OO_X$.

\begin{lemma}\label{lem:nodalcubiclogcotangent}

The log cotangent bundle of $X$ is trivial
\[\lkah{X} \simeq \OO_X.\]
    
\end{lemma}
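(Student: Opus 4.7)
The plan is to exhibit a nowhere-vanishing global section of $\lkah{X}$, which is a line bundle since $X \to v$ is log smooth of relative dimension one. The section will come from the standard invariant differential on the smooth part of $X$, and the verification reduces to checking compatibility at the node.

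On the smooth locus $X \setminus \{p\} \simeq \GG_m$, the morphism to $v$ is strict, so $\lkah{X}$ restricts to the ordinary relative K\"ahler differentials $\Omega^1_{\GG_m/k}$, freely generated by $d\log t$ where $t$ is the coordinate on $\GG_m$. \'Etale-locally near the node, $X \simeq \Spec k[x,y]/(xy)$ with log structure generated by the images of $x$ and $y$, and the map from $v$ sends the generator of $\bar M_v = \NN$ to $(1,1) \in \bar M_X = \NN^2$. Consequently the stalk of $\lkah{X}$ at $p$ is generated by $d\log x, d\log y$ subject to $d\log x + d\log y = d\log q = 0$, so it is freely generated by $d\log x = -d\log y$.

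To see that $d\log t$ extends across the node, I would use the normalization $\nu : \PP^1 \to X$ with $\nu^{-1}(p) = \{0, \infty\}$. On the branch near $t = 0$ the local coordinate $x$ pulls back to a unit multiple of $t$, so $d\log t = d\log x$ in $\lkah{X}$; on the branch near $t = \infty$ the coordinate $y$ pulls back to a unit multiple of $t^{-1}$, giving $d\log t = -d\log y$, which equals $d\log x$ by the relation above. Hence $d\log t$ glues to a single global generator of $\lkah{X}$, yielding $\lkah{X} \simeq \OO_X$.

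The only real subtlety is the matching at the node: it rests on the relation $d\log x + d\log y = 0$ coming from verticality of the morphism $X \to v$. As a sanity check, the log dualizing sheaf of a log smooth curve of arithmetic genus one has degree $2p_a - 2 = 0$, consistent with triviality.
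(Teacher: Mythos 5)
Your proof is correct, but it takes a genuinely different route from the paper's. The paper identifies $\lkah{X}$ with the relative dualizing sheaf $\omega_X$ via F.~Kato's result on log smooth curves, computes that its pullback along the normalization has degree zero, and then pins down \emph{which} degree-zero line bundle it is by degenerating a family of smooth genus-one curves (the Legendre family) and invoking separatedness of $\Pic^0$ of the family. You instead produce the trivialization by hand: the invariant differential $d\log t$ on the smooth locus $\GG_m$, matched across the node using the local presentation $\bar M_{X,p}=\NN^2$ and the relation $d\log x + d\log y = d\log q = 0$ forced by the map to the base. Your argument is more elementary and self-contained (no appeal to $\omega_X=\lkah{X}$, to a smoothing family, or to representability and separatedness of the Picard functor), and it exhibits an explicit generator, which is arguably more informative; the paper's argument is shorter given the cited machinery and generalizes immediately to any log curve whose dualizing sheaf is known. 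One small imprecision in your write-up: if $x$ pulls back to $ut$ with $u$ a unit, then $d\log t = d\log x - du/u$ rather than $d\log x$ on the nose. This is harmless, because the image of the ordinary differentials $\Omega^1_X \to \lkah{X}$ at the node is contained in $\mathfrak m_p\cdot\lkah{X}_p$ (as $dx\mapsto x\,d\log x$, $dy\mapsto -y\,d\log x$), so $d\log t$ still extends across the node and agrees with the generator $d\log x$ modulo $\mathfrak m_p$; you should say this explicitly when claiming the section is nowhere vanishing.
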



\begin{proof}

The log cotangent bundle for nodal curves is the dualizing sheaf \cite[Proposition 1.13]{fkatomodulilogcurves}
\[\omega_X = \lkah{X}.\]
Under the normalization map $\nu : \PP^1 \to X$, the dualizing sheaf pulls back to
\[\nu^* \omega_X = \lkah{\PP^1}(2) = \OO.\]
It is of degree zero, giving a section of $\Pic^0_X$. 

Consider a flat family $\tilde X \to \Aff^1$ of smooth genus-one curves degenerating to $\tilde X \times_{\Aff^1} \vec 0 = X$. For example, take the Legendre family
\[V\left(y^2z = x(x+z)(x+\lambda z)\right) \qquad \subseteq \PP^2 \times \Aff^1_t\]
and restrict to $\Aff^1_t \setminus \{1\}$. The two line bundles $\lkah{\tilde X/\Aff^1}$ and $\OO_{\tilde X}$ on $\tilde X$ give sections of $\Pic^0_{\tilde X/\Aff^1}$ over $\Aff^1$ that agree away from the origin $\vec 0 \in \Aff^1$. The sheaf $\Pic^0_{\tilde X/\Aff^1}$ is representable by a separated scheme \cite[Theorem 3, \S 8.4]{neronmodels}, so the two sections coincide.

\end{proof}

Explicitly, we have 
\[\dim R^i\Gamma(X,\HHl{X}(\OO_X))=
\begin{cases}
2 & i=0\\
1 & i=\pm 1\\
0 & \text{otherwise},
\end{cases}\]
agreeing with the ordinary Hochschild homology of a smooth cubic curve.

The ordinary Hochschild homology of the map $X^\circ \to v^\circ = \pt$ is quite complicated with infinitely many non-zero terms. 

\subsection{Duflo isomorphism}\label{ss:duflo}
Both sides of both statements of Theorem \ref{thm:loghkr} have natural algebra structures. 

The algebra structure on $R\Gamma(X,\HHl{X}(\OO_X))$ is given by the natural algebra structure on $i^*i_*\OO_X$ and the algebra structure on $\oplus_{q-p=\star}H^p(X,\Omega^{q,\log}_X)$ is given by the usual wedge product on $\Sym(\Omega^{1,\log}_X[1])$. It was proven in \cite{arinkin-caldararu} that these two algebra structures coincide in the smooth case, even on the level of the dg algebras $i^*i_*\OO_X$ and $\Sym(\Omega^{1,\log}_X[1])$. 
The proof can be adapted to the case of local complete intersections, showing that the log Hochschild homology as a dg algebra is isomorphic to the dg algebra of logarithmic 1-forms, $\oplus_{q-p=\star}H^p(X,\Omega^{q,\log}_X)$.

The algebra structure on log Hochschild cohomology $R^\star\Gamma(X,\cHHl{X}(\OO_X))$ comes from the Yoneda product on the derived Hom space $\RHom_B(i_*\OO_X, i_*\OO_X)$ while the algebra structure on the dg algebra of logarithmic polyvector fields $\oplus_{p+q=\star}H^p(X,T^{q,\log}_X)$ is given by the usual wedge product. Contrary to the case of log Hochschild homology, these algebra structures may differ. For ordinary Hochschild cohomology \cite{kontsevich2003deformation, calaque2010hochschild}, the algebra structures become isomorphic after contracting with the square root of the Todd class.

For log Hochschild cohomology, we likewise conjecture the algebra structures are isomorphic after contraction with the square root of the ``log Todd class.'' To formulate our conjecture, we first define what we mean by the log Todd class. We follow \cite{grivaux2014hochschild, yu2019todd}.

\subsubsection{Logarithmic Todd class}

Since the map $i:X \to B$ is a local complete intersection morphism of codimension $d$, Verdier duality tells us that
\[\wedge^d T_X^{\log}[-d]\simeq \RSHom_X(\OO_X, i^!\OO_B).\]
Using the natural map $\OO_B\to i_*\OO_X$, we obtain a map
\[\RSHom_X(\OO_X, i^!\OO_B)\to \RSHom_X(\OO_X, i^!i_*\OO_X)\]
where the latter (using Verdier duality again) can be identified with
\[\RSHom_X(\OO_X, i^!i_*\OO_X)\simeq \RSHom_X(\OO_X, i^*i_*\OO_X\otimes \wedge^d T_X^{\log}[-d])\simeq \Sym(T_X^{\log}[-1])\]
where the last isomorphism comes from the log HKR map of Corollary \ref{cor:formalityloghochschild}. The composite of these maps is a map
\[\wedge^d T_X^{\log}[-d]\to \Sym(T_X^{\log}[-1])\]
in the (bounded) derived category $\D^b(X)$. We call the corresponding element in
\[\Hom_{\D^b(X)}(\wedge^d T_X^{\log}[-d], \Sym(T_X^{\log}[-1]))=R^0\Gamma(X, \Sym(\Omega^{1,\log}_X[1]))=\]
\[=\bigoplus_n H^n(X, \Omega^{n,\log}_X)\]
the \emph{log Todd class} of $X$. We denote this class by $Td_X^{\log}$.

\begin{remark}
Via the logarithmic Hodge-to-de Rham degeneration \cite{kato1989logarithmic, esnault1985logarithmic, hablicsek2020hodge}, we see that the Todd class can be interpreted as a class in the even part of the logarithmic Hodge cohomology, $\bigoplus_n H^{2n, \log}_{dR}(X)$ as one would expect \cite{scherotzke2020parabolic}.
\end{remark}

\begin{remark}
The degree 0 part of the Todd class, $Td_X^{\log}$ is always 1, it comes from the identification of the degree $d$ part of $\Sym(T_X^{\log}[-1])$ with $\wedge^d T_X^{\log}$, see \cite{grivaux2014hochschild}. As a consequence, it makes sense to consider the square root of the Todd class.
\end{remark}

Now, we are ready to state our conjecture.

\begin{conjecture}
    The composite morphism $I:HKR^{-1}\circ \iota_{\sqrt{Td_X^{\log}}}$ provides an isomorphism of dg-algebras $\oplus_{p+q=\star}H^p(X,\wedge^q T^{\log}_X)\to R^\star\Gamma(X,\cHHl{X}(\OO_X))$ where $HKR^{-1}$ is the inverse of the composite 
    \[HKR:\Ext^\star_B(i_*\OO_X, i_*\OO_X)\longsimeq \Ext^\star_X(i^*i_*\OO_X,\OO_X)\longsimeq \]
    \[\longsimeq\oplus_k\Ext^{\star-k}(\Omega^{k,\log}_X, \OO_X)\longsimeq \oplus_{p+q=\star}H^p(X,\wedge^q T^{\log}_X)\]
    induced by adjunction and the formality isomorphism of Corollary \ref{cor:formalityloghochschild}, and $\iota_{\sqrt{Td_X^{\log}}}$ is contraction with the square root of the log Todd class.
\end{conjecture}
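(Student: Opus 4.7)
The plan is to reduce the conjecture to a Duflo--Kontsevich-type theorem for the ordinary local complete intersection $i : X \to B$ and then invoke existing machinery for formal derived self-intersections. Both sides of the proposed algebra isomorphism are already encoded in $i$: Verdier duality together with Corollary \ref{cor:formalityloghochschild} yields $R^\star\Gamma(X,\cHHl{X}(\OO_X)) = \Ext^\star_B(i_*\OO_X,i_*\OO_X)$, while the log polyvector algebra $\bigoplus_{p,q} H^p(X,\wedge^q T^{\log}_X)$ identifies with $\bigoplus_{p,q} H^p(X,\wedge^q N_{X/B})$ via $T^{\log}_X \simeq N_{X/B}$. By construction, the log Todd class $Td_X^{\log}$ is the Todd class of the conormal sheaf $N^\vee_{X/B}$: its defining zigzag through $i^!i_*\OO_X$, Verdier duality, and the formality HKR isomorphism is exactly the classical recipe for $Td_{X/B}$.

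With this dictionary in hand, I would appeal to the Duflo-type theorems of Calaque--Rossi--Van den Bergh, Calaque--C\u{a}ld\u{a}raru--Tu, and Grivaux for formal closed embeddings. Proposition \ref{prop:formalityloghochschild} supplies the required quantized cycle (namely the restriction of the splitting $B \to X$ to the first infinitesimal neighborhood), so the full formal Duflo machine is available. Its conclusion is precisely that contraction with $\sqrt{Td_{X/B}}$ corrects the HKR map to a map of graded and, ultimately, dg algebras. The promotion from a graded vector-space statement to a dg-algebra statement would proceed either via Kontsevich's tangent complex argument, or, more geometrically, by showing that the formality quasi-isomorphism $i^*i_*\OO_X \simeq \Sym(\lkah{X}[1])$ of Corollary \ref{cor:formalityloghochschild} is compatible with the natural $A_\infty$-coalgebra structures after the Todd twist.

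The main obstacle will be that these Duflo-type theorems are typically written for closed embeddings of \emph{smooth} schemes, whereas our $B$ is only log smooth and $i$ is an l.c.i.\ rather than a regular embedding into a smooth ambient space. I would circumvent this by working strict étale locally over the Artin fan: there $B$ retracts onto an étale chart of a smooth toric piece and the embedding $X \to B$ becomes, after smooth base change, a regular embedding into a smooth scheme, so the classical Duflo isomorphism applies chart by chart, and its naturality in the quantized cycle forces descent. A secondary technical point, flagged in Remark \ref{rmk:noneedlogseparated}, is upgrading the Grivaux--Arinkin--C\u{a}ld\u{a}raru formality arguments to the l.c.i.\ setting at the level of $A_\infty$-algebras; since $\ccx{X/B}$ is concentrated in a single degree and $i$ is étale-locally a regular immersion, the smooth-case constructions should adapt verbatim once phrased in a suitable dg-enhanced model of $\D(X)$.
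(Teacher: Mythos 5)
The statement you are proving is stated in the paper as a \emph{conjecture}: the authors offer no proof of it, so there is nothing on their side to compare your argument against. The question is therefore only whether your proposal actually closes the gap they left open, and it does not.

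The central problem is that the ``full formal Duflo machine'' you invoke for a general quantized cycle or l.c.i.\ closed embedding $i : X \to B$ does not exist as a theorem in the literature you cite. What Arinkin--C\u{a}ld\u{a}raru, Grivaux, and the quantized-cycle formalism give you is the \emph{additive} statement: formality of the derived self-intersection and hence an HKR-type isomorphism of graded vector spaces (this is exactly Theorem \ref{thm:formality} and Corollary \ref{cor:formalityloghochschild} in the paper). The \emph{multiplicative} Duflo statement --- that contracting with $\sqrt{Td}$ of the normal bundle turns HKR into an algebra isomorphism on Hochschild cohomology --- is proved by Kontsevich and Calaque--Van den Bergh only for the diagonal embedding $X \to X \times X$ of a smooth variety. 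For a general closed embedding the Yoneda product on $\Ext^\star_B(i_*\OO_X, i_*\OO_X)$ is controlled by finer invariants of the embedding (Atiyah-type and HKR obstruction classes of $X$ in $B$), and whether a Todd-class twist alone corrects the product is precisely the open question; asserting that ``its conclusion is precisely'' the desired statement assumes the conjecture. This is why the authors state it as a conjecture rather than a theorem, even though they have the quantized cycle and the formality isomorphism fully in hand.

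Your proposed local-to-global reduction does not repair this. Working strict \'etale locally over the Artin fan can reduce the \emph{additive} formality to the smooth toric case, but the algebra structure you need to control is the global Yoneda product on $\Ext^\star_B(i_*\OO_X,i_*\OO_X)$, and the Duflo correction is not a statement that descends for free from charts: the comparison of $A_\infty$-structures is exactly where the gluing data (the choice of quantized cycle, the HKR class of the embedding) enters nontrivially, and ``naturality in the quantized cycle forces descent'' is an assertion, not an argument. A genuine proof would more plausibly go through the Lie algebroid version of the Duflo theorem applied to $T^{\log}_X = T_{X/\af{X}}$, viewing $i : X \to B$ as the relative diagonal of the smooth map $X \to \af{X}$; but even there one must deal with the Artin stack base and verify that the Yoneda product on $\Ext_B$ agrees with the Gerstenhaber-type product coming from the Lie algebroid picture. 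None of this is carried out in your proposal, so the conjecture remains open.
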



\subsection{Smooth pairs $(X, D)$}

Our construction can be described explicitly when the log structure on $X$ is given by a smooth effective Cartier divisor $D$. Then $B$ is an open subset of the blow-up $Bl_{D\times D}(X\times X)$ of $D\times D$ inside $X\times X$. 

The diagonal map $\Delta:X\to X\times X$ factors through $Bl_{D\times D}(X\times X)$, providing Cartesian squares
\[\begin{tikzcd}
D\ar[d] \pb \ar[r] & X\ar[d, "j"]\\
E\ar[r]\ar[d] \pb & Bl_{D\times D}(X\times X)\ar[d]\\
D\times D\ar[r]& X\times X.
\end{tikzcd}
\]
We denote the map $X\to Bl_{D\times D}(X\times X)$ by $j$. Since $B$ is an open subset of $Bl_{D\times D}(X\times X)$, we have a natural isomorphism of functors $i^*i_*$ and $j^*j_*$. For the same reason, $N^\vee_{X/Bl_{D\times D}(X\times X)} \simeq N^\vee_{X/B} \simeq \Omega^{1,log}_X$.

Recall $(X, D)$ is weakly log separated if and only if $X^\circ$ is separated by Proposition \ref{prop:logsepsnc}. For pairs, one could define log Hochschild co/homology and repeat our proofs with the classical blow-up $Bl_{D \times D} X \times X$ instead of the log diagonal $B$. To summarize the above discussion, we have the following statement which is an explicit version of Theorem \ref{thm:loghkr}.

\begin{proposition}

For a pair $(X,D)$ of a smooth, separated scheme $X$ and $D$ a simple normal crossing divisor on $X$, we obtain
\[R^n\Gamma(X,\HHl{X}(\OO_X))=\oplus_{q-p=n}H^p(X,\Omega^{q}_X(\log D))\]
and
\[R^n\Gamma(X,\cHHl{X}(\OO_X))=\oplus_{p+q=n}H^p(X,\wedge^q T_X(-\log D)).\]

\end{proposition}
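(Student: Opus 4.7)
The plan is to reduce the statement to Theorem \ref{thm:loghkr} by checking the hypotheses and identifying the log co/tangent bundles with their classical counterparts for the smooth pair $(X,D)$.

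First I would verify that $(X,D)$ satisfies Assumption \ref{ass:logsmvar}. A simple normal crossings pair on a smooth scheme is log smooth with respect to the divisorial log structure, and the strata $D_I = \bigcap_{i \in I} D_i$ are smooth by the s.n.c.\ hypothesis. Since $X^\circ$ is assumed separated, Proposition \ref{prop:logsepsnc} gives that $X$ is weakly log separated. Finite type is clear, so we are in the setting of Theorem \ref{thm:loghkr}.

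Next I would identify the log Kähler differentials. For a smooth scheme $X$ with a simple normal crossings divisor $D$ and the associated divisorial log structure, the standard computation (see e.g.\ \cite{kato1989logarithmic, ogusloggeom}) gives
\[\lkah{X} \simeq \Omega^1_X(\log D),\]
and dually $\Tl{X} \simeq T_X(-\log D)$. Taking exterior powers yields $\Omega^{q,\log}_X \simeq \Omega^q_X(\log D)$ and $\wedge^q \Tl{X} \simeq \wedge^q T_X(-\log D)$. Alternatively, this identification can be read off the explicit description just given in this subsection: since $B$ is an open subset of $Bl_{D \times D}(X \times X)$ and $j : X \to B$ is a regular embedding whose conormal sheaf is $\Omega^1_X(\log D)$, we get $N^\vee_{X/B} \simeq \Omega^1_X(\log D)$, matching the intrinsic log computation.

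Finally I would plug these identifications into Theorem \ref{thm:loghkr}, which provides the isomorphisms
\[R^n\Gamma(X,\HHl{X}(\OO_X))=\bigoplus_{q-p=n}H^p(X,\Omega^{q,\log}_X), \qquad R^n\Gamma(X,\cHHl{X}(\OO_X))=\bigoplus_{p+q=n}H^p(X,\wedge^q T^{\log}_X),\]
and substitute to obtain the stated formulas with $\Omega^q_X(\log D)$ and $\wedge^q T_X(-\log D)$. The only non-routine step is checking weak log separatedness, which is handled by Proposition \ref{prop:logsepsnc}; everything else is bookkeeping comparing the intrinsic log cotangent complex to the explicit logarithmic forms in the smooth pair case. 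No serious obstacle is expected, since the substantive work is already encapsulated in Theorem \ref{thm:loghkr} and in the identification of $B$ with an open in the blow-up made earlier in this subsection.
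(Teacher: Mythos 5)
Your proposal is correct and matches the paper's intent: the paper marks the proof ``Omitted'' precisely because, as you observe, the statement is Theorem \ref{thm:loghkr} combined with Proposition \ref{prop:logsepsnc} (to verify Assumption \ref{ass:logsmvar}) and the standard identification $\lkah{X} \simeq \Omega^1_X(\log D)$ for the divisorial log structure of an s.n.c.\ pair. Your alternative route through $N^\vee_{X/B}$ and the open embedding $B \subseteq Bl_{D\times D}(X\times X)$ is exactly the discussion the paper gives immediately before the proposition, so the two arguments coincide.
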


\begin{proof}

Omitted. 
    
\end{proof}

\begin{example}\label{ex:logdiagonalaffineline}

Consider the pair $(X, D) = (\Aff^1, \vec{0})$. The blow-up $Bl_{D \times D} X \times X = Bl_{\vec 0} \Aff^2$ is the blow-up of the affine plane at the origin. The Artin fans of $\Aff^1, \Aff^2$ are their quotients by dense tori $\bra{\Aff^1/\GG_m}, \bra{\Aff^1/\GG_m}^2$ as in Figure \ref{fig:tropicalblowupA1}. We have a Cartesian diagram
\[
\begin{tikzcd}
X \ar[r] \ar[dr]      &B \ar[r] \ar[d] \lpbstrict      &Bl_{\vec 0} \Aff^2 \ar[r] \ar[d] \lpbstrict         &\Aff^2 \ar[d]         \\
        &\bra{\Aff^1/\GG_m} \ar[r]         &\Sigma \ar[r]         &\bra{\Aff^2/\GG_m^2}.
\end{tikzcd}
\]
Here $\Sigma$ is the stack quotient of $Bl_{\vec 0} \Aff^2$ by its dense torus, the Artin fan corresponding to the first quadrant subdivided along the diagonal. 

\begin{figure}
    \centering
    \begin{tikzpicture}
    \draw[->] (0, 0) to (1, 1);
    \fill (0, 0) circle (.05);
    \draw[->] (1.5, .5) to (2.5, .5);
    \draw[->] (4.5, .5) to (5.5, .5);
    \begin{scope}[shift = {(3, 0)}]
    \draw[->] (0, 0) to (1, 0);
    \draw[->] (0, 0) to (0, 1);
    \draw[->, dashed] (0, 0) to (1, 1);
    \fill (0, 0) circle (.05);
    \end{scope}
    \begin{scope}[shift = {(6, 0)}]
    \draw[->] (0, 0) to (1, 0);
    \draw[->] (0, 0) to (0, 1);
    \fill (0, 0) circle (.05);
    \end{scope}
    \end{tikzpicture}
    \caption{The cone complexes corresponding to the Artin fans $\bra{\Aff^1/\GG_m} \to \Sigma \to \bra{\Aff^1/\GG_m}^2$. These are the tropicalizations of $X \to Bl_{\vec 0} \Aff^2 \to \Aff^2$. The blowup is produced by subdividing $\af{\Aff^2}$ along the image of the diagonal $\af{\Aff^1}$. We suspect this blowup description exists in general. }
    \label{fig:tropicalblowupA1}
\end{figure}

Write $E' \simeq \PP^1$ for the exceptional divisor in $Bl_{\vec 0} \Aff^2$ and $E \simeq \GG_m \subseteq E'$ for the complement of its two points of intersection with the strict transforms of the $x-$ and $y-$ axes. The space $B$ in this case is the complement in $Bl_{\vec 0} \Aff^2$ of the strict transforms of the axes 
\[B = \Aff^2 \setminus V(xy) \cup E.\] 
This is obtained by decomposing the blow-up into its $\GG_m^2$-invariant components
\[Bl_{\vec 0} \Aff^2 = (\Aff^2 \setminus Z(xy)) \cup (Z(x) \setminus \{0\}) \cup (Z(y) \setminus \{0\}) \cup E\]
and taking the components to which the diagonal maps.

\end{example}

\begin{example}

Let us apply theorem \ref{thm:loghkr} to compute the log Hochschild homology and cohomology of $\Aff^1$ with the log structure induced by $0$. Recall that $\Omega^{1,log}_{\Aff^1} \simeq k[t] \dfrac{dt}{t}$ and dually $T^{\log}_{\Aff^1} \simeq t \cdot k[t] \dfrac{\partial}{\partial t}$ are locally free of rank 1. 
The complexes obtained by taking wedge products have no higher terms. Thus the log Hochschild homology (respectively cohomology) is concentrated in degrees 0 and 1 (respectively 0 and $-1$).
    
\end{example}

One may ask if $B$ always appears as an open of a log blowup of $X \times X$. 

\begin{example}\label{ex:A2logdiagonalblowup}

Let $X = \Aff^2$. The diagonal map on Artin fans for $\Aff^2 \to \Aff^2 \times \Aff^2$ embeds the Artin fan $\af{\Aff^2} = \bra{\Aff^2/\GG_m^2}$ inside its diagonal. This is the cone over a line segment inside the tetrahedron, depicted in blue in Figure \ref{fig:A2diagonal}. 

Adding the blue line segment yields a subdivision $\Sigma \to \af{\Aff^2 \times \Aff^2} = \af{}^4$ into non-convex cones. The pullback $\tilde X \coloneqq \Aff^4 \times_{\af{\Aff^4}} \Sigma$ is not representable by a scheme, though it has a functor of points on log schemes. It is the analogue of $Bl_{\vec 0} \Aff^2$ in the example of $\Aff^1$ because $B_{\Aff^2} \subseteq \tilde X$ is an open. 

The map $\tilde X \to \Aff^4$ is like a ``log blowup'' of $\Aff^4$. There is a further refinement of $\Sigma$ by convex cones that yields a representable log modification $Y$ both of $\Aff^4$ and of $\tilde X$. The space $\tilde X$ is a \emph{logarithmic space} (of the second kind) \cite{logabelianvarieties2}, \cite{logpic}.

There are many such choices of $Y$. For example, one can take the minimal common refinement of the star subdivisions at the two endpoints of the line segment. 

One could also take the barycentric subdivision. But this involves subdividing also along the blue line segment, modifying the diagonal $\af{\Aff^2}$.

\begin{figure}
    \centering
    \begin{tikzpicture}
    \draw[-] (0, 0) to (1, 1.7) to (2, 0) to (2.5, 1) to (0, 0);
    \draw[-] (1, 1.7) to (2.5, 1);
    \draw[-] (0, 0) to (2, 0);
    \draw[-, blue, dashed] (.5, .85) to (2.25, .5);
    \filldraw (.5, .85) circle (.1em);
    \filldraw (2.25, .5) circle (.1em);
    \end{tikzpicture}
    \caption{The diagonal $\af{\Aff^2} \to \af{\Aff^2} \times \af{\Aff^2}$ is the cone over the inclusion of the blue dashed line segment into the tetrahedron. Subdividing at the image of the diagonal adds the blue line to obtain a non-convex subdivision $\Sigma$ of $\af{\Aff^2 \times \Aff^2} = \af{}^4$. 
    This non-convex subdivision has a functor of points on log schemes and admits a refinement by a subdivision with convex cones.  }
    \label{fig:A2diagonal}
\end{figure}

\end{example}

\subsection{Cyclic homology of a logarithmic scheme}\label{sec:cyclic}

We develop the log version of cyclic homology parallel to the non-logarithmic approach \cite{toen2007note, ben2012loop, bokstedt1993topological}. We begin with the definition of the logarithmic loop space.

\begin{definition}
    Let $X$ be a log scheme with Artin fan $X\to \af{X}$. We define the logarithmic derived loop space of $X$ as the derived stack
    \[LX^{\log} :=\Map_{\dSt/\af{X}}(S^1, X).\]
\end{definition}
Here, $S^1$ denotes the stack associated to the constant simplicial presheaf
\[\alg/\af{X}\to \SSets.\]
Since the homotopy pushout of
\[
\begin{tikzcd}
\star \sqcup \star \ar[r]\ar[d]& \star\\
\star &
\end{tikzcd},
\]
is $S^1$, we obtain that the logarithmic derived loop space can be realized as a derived self-intersection
\[LX^{\log}=X\times^h_{X\times^h_{\af{X}}X} X.\]

If $X$ is log flat, the derived and underived log fibre products coincide 
\[X\times^h_{\af{X}}X \simeq B.\] 
The logarithmic derived loop space of a log smooth scheme $X$ is the derived self-intersection
\[LX^{\log}\simeq X\times^h_BX.\]
Using either of the projection maps $p:LX^{\log}\to X$ and Proposition 1.4 of \cite{toen2012proper}, we have
\[p_*\OO_{LX^{\log}}=i^*i_*\OO_X=\HHl{X} (\OO_X).\]
For log smooth $X$, we have identified the functions on the logarithmic derived loop space with the logarithmic Hochschild homology of $X$ evaluated at the structure sheaf.

The logarithmic derived loop space is equipped with an $S^1$-action induced by the action on the first argument $\Map_{\dSt/\af{X}}(S^1, X)$ called \emph{rotating the loops}. Therefore $R\Gamma(X, \HHl{X}(\OO_X))$ can be regarded as an $S^1$-equivariant complex, in other words, as a mixed complex \cite{kassel1987cyclic, loday1998cyclic}.

We are ready to define the versions of the cyclic homology of a log scheme.
\begin{definition} For a log smooth scheme $X$, we define
\begin{itemize}
    \item the $i$-th cyclic homology as the complex of homotopy orbits of the $S^1$-action
    \[HC^{\ell}_i(X):=\pi_i(R\Gamma(X, \HHl{X}(\OO_X))_{hS^1}),\]
    \item the $i$-th negative cyclic homology as the complex of homotopy fixed points of the $S^1$-action
    \[HC^{-,\ell}_i(X):=\pi_i(R\Gamma(X, \HHl{X}(\OO_X))^{hS^1}),\]
    \item and finally, the $i$-th periodic cyclic homology as the complex of Tate fixed points of the $S^1$-action
    \[HC^{per,\ell}_i(X):=\pi_i(R\Gamma(X, \HHl{X}(\OO_X))^{tS^1}),\]
\end{itemize}
    
\end{definition}

\begin{remark}
    M. Olsson defines an isomorphic ``log cyclic co/homology'' using explicit chain complexes \cite{olssondraftloghochschild}.
\end{remark}
 
From the universal property of the pullback, we obtain a natural $S^1$-equivariant map $X\to LX^{\log}$ called the constant loop map. The structure morphism $\af{X}\to k$ induces a forgetful map $\mathcal{F}_{LX}:LX^{\log}\to LX$ from the logarithmic derived loop space to the loop space of $X$ forgetting the log loops.
\[
\begin{tikzcd}
    LX \ar[ddr, red]\ar[rrd, red] & & &\\
    & LX^{\log}\ar[lu, dotted] \ar[r]\ar[d] & X\ar[d]\ar[rdd, red] &\\
    & X \ar[rrd, red]\ar[r] & B\ar[rd] &\\
    & & & X\times X
\end{tikzcd}
\]

This forgetful map is $S^1$-equivariant, providing an $S^1$-equivariant map of complexes (thus, a map of mixed complexes)
\begin{equation}
    R\Gamma(X, \HH{X}(\OO_X))\to R\Gamma(X, \HHl{X}(\OO_X)).
\end{equation}

\begin{example}
In the case of a log scheme given by a pair $(X,D)$ with $X$ a smooth scheme and $D$ a simple normal crossing divisor on $X$, the map of mixed complexes
\[R\Gamma(X, \HH{X}(\OO_X))\to R\Gamma(X, \HHl{X}(\OO_X))\]
induced by the forgetful map $\mathcal{F}_{LX}$ is given by the inclusion $\Omega^1_X\to \Omega^1_{X}(\log D)$. 

The degree $-1$ differential on the mixed complex $R\Gamma(X, \HHl{X}(\OO_X))$ is given by the 
log de Rham differential as in the case of $R\Gamma(X, \HH{X}(\OO_X))$. As a consequence, the variants of the cyclic homology can be computed in terms of the logarithmic de Rham complex. 

For instance, the periodic cyclic homology computes the even (or odd) part of the logarithmic de Rham cohomology, 
\[HC^{per,\ell}_0(X)=\prod_{i\geq 0}H^{2i}_{DR,log}(X),\]
and
\[HC^{per,\ell}_1(X)=\prod_{i\geq 0}H^{2i+1}_{DR,log}(X).\]

Using the logarithmic Hodge-to-de Rham degeneration \cite{kato1989logarithmic, esnault1985logarithmic, hablicsek2020hodge}, 
we can compute the periodic cyclic homology explicitly:
\[HC^{per,\ell}_0(X)=\bigoplus_{p+q \text{ is even}} H^{q}(X, \Omega^p_X(\log D)),\]
and
\[HC^{per,\ell}_1(X)=\bigoplus_{p+q \text{ is odd}} H^{q}(X, \Omega^p_X(\log D)).\]

\end{example}

\section{Application to orbifolds}\label{s:logorbifolds}

In this section, we describe the Hochschild co/homology of log (quotient) orbifolds $[X/G]$ in the case of a \textit{firm} action (see Assumption \ref{ass:actionstrata}). Using formality results, we understand the logarithmic derived loop space associated to a log orbifold and we derive formulae for the Hochschild co/homology. We closely follow \cite{arinkin2019formality}, so the proofs are only sketched. 

Let $G$ be a finite group acting on $(X,M_X)$ \textit{as a log scheme}: the action map $G \times X \to X$ is a map of log schemes. We view $G$ as a finite étale constant group scheme with trivial log structure. We work over a field $k$ in which the order of $G$ is invertible. 


\begin{remark}
    Let $X$ be a smooth scheme equipped with divisorial log structure from a normal crossings divisor $D$. 
    The action respects the log structure if and only if it restricts to $D$. Assume that there exists an element $g \in G$ and a point $x \in D$ such that $g.x \notin D$. Then the induced map $(g.^{-1} j_* \OO_U)_x \to (j_* \OO_U)_x$ (here $U = X \setminus D)$) has a non-zero source and a zero target, thus it is not bijective.
\end{remark}



\begin{remark}

The main technical difficulty here is that the Artin fan of an orbifold is not isomorphic to the corresponding orbifold of Artin fans. More precisely, consider the stack quotient $Y = \bra{X/G}$ with its natural log structure. The maps
\[G \times X \rightrightarrows X \to Y\]
are strict, as is each map
\[g : X \to X\]
by which $g$ acts on $X$. These maps induce maps on Artin fans
\[\af{G \times X} = G \times \af{X} \rightrightarrows \af{X} \to \af{Y}.\]

This diagram yields a map
\[\bra{\af{X}/G} \to \af{Y}\]
which one might expect to be an isomorphism. Unfortunately, it never is unless $G$ is trivial. The vertex of $\af{X}$ is always a fixed point for the $G$-action. The Artin fan $\af{Y}$ is always representable over $\Log$ by construction, whereas $\bra{\af{X}/G}$ need not be. But the map is a partial coarse moduli space, in particular finite and flat. 

\end{remark} 

To circumvent this difficulty, we consider log orbifolds where the action is \textit{firm}:

\begin{assumption}\label{ass:actionstrata}
For the rest of this article, we assume the projection and action maps to the Artin fan
\[G \times X \rightrightarrows X \to \af{X}\]
coincide. Equivalently, we assume that the action of $G$ on the Artin fan is trivial (the map $X \to \af{X}$ is $G$-invariant). We call such action a \textit{firm} action, and the corresponding orbifold $[X/G]$ a \textit{firm} orbifold.
\end{assumption}

For firm actions, we have $\bra{\af{X}/G} = \af{X} \times BG$. 

\begin{remark}
    If the log structure is induced by a normal crossing divisor $D$, a firm action means that $G$ acts on each stratum of the log structure separately. 
\end{remark}

\begin{example}\label{ex:P1notbystrata}
Consider $\PP^1$ equipped with the log structure induced by the divisor that is the union of $0 = [0 : 1]$ and $\infty = [1 : 0]$, and let the nontrivial element $-1$ in $G \coloneqq \ZZ/2$ act on $\PP^1$ by $[x : y] \mapsto [y: x]$. This is the inversion action $z \mapsto \dfrac{1}{z}$. It swaps $0 = [0 : 1]$ and $\infty = [1 : 0]$. The action is not firm.

The lack of firmness can be seen tropically. The action on $\PP^1$ induces an action of $G$ on the tropicalization $\af{\PP^1}$. The tropicalization $\af{\PP^1} \subseteq \af{}^2$ is not invariant under the group action as the action swaps the coordinates on $\af{}^2$. We illustrate this with Figure \ref{fig:P1notbystrata}.

\begin{figure}
    \centering
    \begin{tikzpicture}
    \draw[-] (-1, 0) to (1, 0);
    \filldraw (0, 0) circle (.08);
    \node at (-1, 1){$\af{\PP^1}$};
    \draw[<->, bend left] (-.2, .2) to (.2, .2);
    \begin{scope}[shift = {(4, 0)}]
    \draw[-] (-1, 0) to (1, 0);
    \draw[-] (0, -1) to (0, 1);
    \filldraw (0, 0) circle (.08);
    \draw[-, dashed] (-1, -1) to (1, 1);
    \draw[-, dashed] (-1, 1) to (1, -1);
    \node at (-1, 1){$\Delta_{-1}$};
    \node at (1, 1){$\Delta$};
    \end{scope}
    \end{tikzpicture}
    \caption{The action of $G = \ZZ/2$ on the tropicalization $\af{\PP^1}$ of $\PP^1$ swaps the two rays. 
    The tropicalization of the diagonal map $\af{\PP^1} \to \af{\PP^1 \times \PP^1}$ is different from that of the twisted diagonal $\Delta_{-1}(x) = (x, -1.x)$.
    Here it is not enough to subdivide at the usual log diagonal $\Delta$; one must also divide at the variants $\Delta_g(x) = (x, g.x)$ for all $g \in G$. 
    This action does \textit{not} satisfy Assumption \ref{ass:actionstrata}.
    }
    \label{fig:P1notbystrata}
\end{figure}    
\end{example}

\begin{lemma}\label{lem:firmAF}

If an action $G \action X$ is firm, the Artin fan of the quotient stack $\bra{X/G}$ is the same as that of $X$:
\[\af{X} \longsimeq \af{\bra{X/G}}.\]

\end{lemma}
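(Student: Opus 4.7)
The plan is to apply Proposition \ref{prop:AFisoconnfibers}. First, firmness means the map $X \to \af{X}$ is $G$-invariant for the trivial $G$-action on $\af{X}$, so it descends through the quotient to a morphism $\bra{X/G} \to \af{X}$. This yields a factorization $\bra{X/G} \to \af{X} \to \Log$ through the strict étale representable map $\af{X} \to \Log$. The universal property of $\af{\bra{X/G}}$ as the initial such factorization produces a unique morphism $\phi : \af{\bra{X/G}} \to \af{X}$ compatible with all the data.

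Next I would verify the hypotheses of Proposition \ref{prop:AFisoconnfibers} applied to $\phi$, with the test map $\bra{X/G} \to \af{\bra{X/G}}$. The morphism $\phi$ is automatically strict étale and representable by algebraic spaces, since both source and target are so over $\Log$, and $\af{\bra{X/G}}$ is a family of cone stacks by construction. What remains is to check that the two maps $\bra{X/G} \to \af{\bra{X/G}}$ and $\bra{X/G} \to \af{X}$ are surjective with geometrically connected fibers.

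For the first, Proposition \ref{prop:AFconnfibers} applies provided $\bra{X/G} \to \mathrm{pt}$ is quasicompact, log flat, and has log reduced geometric fibers. All three properties descend from $X$ along the strict étale surjection $X \to \bra{X/G}$ via Lemmas \ref{lem:geomfibers} and \ref{lem:logreducedfibers}, and they hold on $X$ under the standing hypotheses (Assumption \ref{ass:logsmvar}). For the second, the geometric fiber of $\bra{X/G} \to \af{X}$ over a point $v$ identifies with the quotient stack $\bra{X_v/G}$, where $X_v = X \times_{\af{X}} v$ inherits a $G$-action because $G$ acts trivially on $\af{X}$. The fiber $X_v$ is geometrically connected by Proposition \ref{prop:AFconnfibers} applied to $X$, and $|\bra{X_v/G}| = |X_v|/G$ is connected as a continuous image of a connected space.

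The main subtle point is invoking the universal property in the correct direction and checking that connectedness of $\bra{X_v/G}$ really follows from connectedness of $X_v$; both are routine once set up. Proposition \ref{prop:AFisoconnfibers} then concludes that $\phi$ is an isomorphism.
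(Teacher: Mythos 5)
Your proof is correct, but it takes a genuinely different route from the paper's. The paper's proof is a direct verification of the universal property: since $X \to \af{X}$ is $G$-invariant it descends to $\bra{X/G} \to \af{X}$, and any factorization $\bra{X/G} \to \scr B \to \Log$ through an étale representable map pulls back along the epimorphism $X \to \bra{X/G}$ to a factorization of $X \to \Log$, whence the universal property of $\af{X}$ supplies the unique arrow $\af{X} \to \scr B$. That argument is essentially formal and uses nothing beyond firmness and the existence of the Artin fans. You instead construct the comparison map $\phi : \af{\bra{X/G}} \to \af{X}$ and prove it is an isomorphism by the fiber-connectedness machinery of Propositions \ref{prop:AFconnfibers} and \ref{prop:AFisoconnfibers}; this works, and your verifications (descent of quasicompactness, log flatness, and log reduced geometric fibers along the finite étale cover $X \to \bra{X/G}$, and the identification of the fiber of $\bra{X/G} \to \af{X}$ over a geometric point $v$ with $\bra{X_v/G}$, whose underlying space is a continuous image of the connected space $|X_v|$) are all sound. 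What the paper's approach buys is generality and brevity: it needs no log smoothness, flatness, or fiber hypotheses on $X$. What your approach buys is an explicit comparison morphism and a template that would also detect failure of the statement when firmness is dropped (the fibers of $\bra{X/G} \to \af{X}$ can then disconnect); but it does impose the standing hypotheses of the section, which the lemma as stated does not strictly require.
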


\begin{proof}

Because $X \to \af{X}$ is $G$-invariant, it factors through $\bra{X/G}$. We claim the factorization $\bra{X/G} \to \af{X}$ is the Artin fan of the source, checking its universal property. For any factorization
\[
\begin{tikzcd}
X \ar[r] \ar[dr]      &\bra{X/G} \ar[r] \ar[d]       &\scr B \ar[d]      \\
        &\af{X} \ar[r] \ar[ur, dashed, "\exists !"]      &\Log
\end{tikzcd}
\]
through an étale representable map $\scr B \to \Log$, there is a unique dashed arrow. This results from the analogous universal property for $X \to \af{X}$ and $G$-invariance. 


\end{proof}

In the case of a firm action, the log diagonal $X\to B$ records not only the image of the diagonal $X \to X \times X$, but also the images of the ``twisted diagonals'' $\Delta_g : X \to X \times X, x \mapsto (x, g.x)$.

\begin{lemma}\label{lem:actiononB}

Let $[X/G]$ be a firm log orbifold. There is an induced action of $G\times G$ on the scheme $B = B_X$ and the natural map $B\to X\times X$ is $G\times G$-equivariant.

\end{lemma}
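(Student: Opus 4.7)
The plan is to lift the coordinate-wise $G \times G$-action on $X \times X$ to $B$ via the universal property of the fiber product, exploiting that firmness (Assumption \ref{ass:actionstrata}) trivializes every $G$-action appearing at the base.

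First I would rewrite $B$ as
\[B \;=\; (X \times X) \times_{\af{X} \times \af{X}} \af{X},\]
using Theorem \ref{thm:AFdiagonal} to identify $\af{X \times X} \simeq \af{X} \times \af{X}$, with the map from $\af{X}$ being the diagonal. By firmness, $G$ acts trivially on $\af{X}$, so $G \times G$ acts trivially on $\af{X} \times \af{X}$. Consequently the diagonal $\af{X} \to \af{X} \times \af{X}$ is tautologically $G \times G$-equivariant, and the product map $X \times X \to \af{X} \times \af{X}$ is $G \times G$-equivariant because each factor $X \to \af{X}$ is $G$-invariant.

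Then the universal property of the fs fiber product produces a unique $G \times G$-action on $B$ for which both projections $B \to X \times X$ and $B \to \af{X}$ are equivariant; the equivariance of $B \to X \times X$ claimed in the lemma is thus automatic by construction. The only subtlety — which I do not expect to be a real obstacle — is to check that the fs fiber product behaves as expected with respect to the group action; since $X \times X \to \af{X} \times \af{X}$ is strict, the fs and schematic pullbacks agree here, so no saturation issue obstructs the transfer of the action.
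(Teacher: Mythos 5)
Your proposal is correct and follows essentially the same route as the paper: both arguments use firmness to see that the action and projection maps $G\times G\times X\times X\rightrightarrows X\times X$ agree after composing to $\af{X}\times\af{X}$, so the $G\times G$-action lifts to the fiber product $B=(X\times X)\times_{\af{X}\times\af{X}}\af{X}$ by its universal property, with equivariance of $B\to X\times X$ automatic. Your remark that strictness of $X\times X\to\af{X\times X}$ removes any saturation issue matches the paper's observation immediately after diagram \eqref{eq:diagram}.
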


\begin{proof}

We sketch the proof. The action of $G\times G$ on $B$ is induced by the pullbacks 
\[
\begin{tikzcd}
G \times G \times B \ar[r] \ar[d, shift right = 1]\ar[d, shift left = 1] \lpbstrict        &G \times G \times X \times X \ar[d, shift right = 1]\ar[d, shift left = 1]       \\
B \ar[r] \ar[d] \lpbstrict        &X \times X \ar[d]         \\
\af{X} \ar[r]        &\af{X} \times \af{X}
\end{tikzcd}.
\]
where the two right vertical arrows $G\times G\times X\times X\to X\times X$ are given by the action and the projection maps. Since $[X/G]$ is firm, the composition $G\times G\times X\times X\to X\times X\to \af{X}\times \af{X}$ does not depend on whether we use the action map or the projection map. This means that the Cartesian product of $G\times G\times X\times X\times_{\af{X}\times \af{X}}\af{X}$ is $G\times G\times B$ fitting in the diagram drawn above. Thus we get a map $G\times G\times B\to B$ compatible with the action map $G\times G\times X\times X\to X\times X$, and it is straightforward to show that the map $G\times G\times B\to B$ induces a group action of $G\times G$ on $B$.
\end{proof}

As a consequence of the lemma above, we obtain a map of log orbifolds $[B/G\times G]\to [X\times X/G\times G]$. 

\begin{lemma}\label{lem:orbifoldetale}
    The map $[B/G\times G]\to [X\times X/G\times G]$ is log \'etale.
\end{lemma}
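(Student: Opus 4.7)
The plan is to exhibit $\bar f \colon [B/G\times G] \to [X\times X/G\times G]$ as the quotient of the log \'etale map $f \colon B \to X\times X$ from Proposition \ref{prop:logdiagonalprops} by a compatible group action, and then deduce log \'etaleness of $\bar f$ from that of $f$ by strict \'etale descent. Concretely, I would form the square
\[
\begin{tikzcd}
B \ar[r, "f"] \ar[d, "q_B"]       & X \times X \ar[d, "q"] \\
{[B/G\times G]} \ar[r, "\bar f"]       & {[X\times X/G\times G]}
\end{tikzcd}
\]
which commutes by the $G\times G$-equivariance of $f$ established in Lemma \ref{lem:actiononB}, and verify it is Cartesian.

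For Cartesianness, $q$ is the universal $G\times G$-torsor over $[X\times X/G\times G]$, so an object of the fiber product $(X\times X) \times_{[X\times X/G\times G]} [B/G\times G]$ over a test scheme $T$ consists of a map $T \to X\times X$, a $G\times G$-torsor $P \to T$ with an equivariant map $P \to B$, and a trivialization of $P$ identifying the induced map $P \to X\times X$ with the given $T \to X\times X$. Unwinding the trivialization, the data reduce to a single map $T \to B$ lifting the given map, so the fiber product is canonically $B$.

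Next I would note that $q$ is strict \'etale surjective: it is a torsor under the finite \'etale group scheme $G\times G$ (whose order is invertible by hypothesis), and it is strict because the log structure on the quotient stack is by definition the descent of the $G\times G$-invariant log structure on $X\times X$. The firmness assumption \ref{ass:actionstrata} guarantees this invariance on $X\times X$, and Lemma \ref{lem:actiononB} provides the analogous compatible action on $B$, so the same remark applies to $q_B$. Log \'etaleness of a map to a log algebraic stack is strict \'etale local on the target via the functor $\Log(-)$: since strict base change commutes with formation of $\Log$, the map $\bar f$ is log \'etale iff the induced map $[B/G\times G] \to \Log\, [X\times X/G\times G]$ is \'etale, which by \'etale descent can be tested after strict base change along $q$, where it becomes the map $B \to \Log(X\times X)$ encoding $f$.

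The main obstacle is verifying Cartesianness of the square carefully; once that is in hand, the remainder is a routine application of strict \'etale descent and Proposition \ref{prop:logdiagonalprops}. No further local computation is needed, since $q$ supplies a strict \'etale atlas through which log \'etaleness of $\bar f$ is reflected from that of $f$.
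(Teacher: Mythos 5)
Your proof is correct and follows essentially the same route as the paper: the paper also deduces log \'etaleness of $[B/G\times G]\to[X\times X/G\times G]$ from log \'etaleness of $B\to X\times X$ (and its base change $G\times G\times B\to G\times G\times X\times X$) by descent along the strict \'etale presentation, which is exactly your Cartesian square with the atlas $q$. Your write-up is just more explicit about verifying Cartesianness and about why log \'etaleness descends via $\Log(-)$; the only nitpick is that strictness of $q$ comes from $G$ acting by log scheme automorphisms, not from firmness (firmness is only needed to have the $G\times G$-action on $B$ in the first place, via Lemma \ref{lem:actiononB}).
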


\begin{proof}

In the Cartesian diagram of Lemma \ref{lem:actiononB}, the maps $B\to X\times X$ and $G\times G\times B\to G\times G\times X\times X$ are log \'etale. The statement follows from this.  
    
\end{proof}

The commutative diagram of the comparison of the group actions of $G$ on $X$ and of $G\times G$ on $X\times X$
\[
\begin{tikzcd}
G \times X \ar[r, "\Delta_G\times \Delta_X"] \ar[d, shift right = 1]\ar[d, shift left = 1]       &G \times G \times X \times X \ar[d, shift right = 1]\ar[d, shift left = 1]       \\
X \ar[r, "\Delta_X"] \ar[d]       &X \times X \ar[d]         \\
\af{X} \ar[r]        &\af{X} \times \af{X}
\end{tikzcd}.
\]
provides a map $G\times X\to G\times G\times B$, and subsequently a natural map of orbifolds
\begin{equation}\label{eqn:logdiagorbifold}
h:[X/G]\to [B/G\times G].
\end{equation}
Lemma \ref{lem:logdiagorbifold} will show that there is no confusion in calling it (again) log diagonal.


\begin{lemma}\label{lem:logdiagorbifold}
    There is an isomorphism of logarithmic algebraic stacks
    \[ [B / G \times G] \longsimeq [X/G] \times_{\af{X}} [X/G].\]
\end{lemma}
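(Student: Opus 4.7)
The plan is to identify $[B/G\times G]$ with the pullback $[X/G]\times_{\af{X}}[X/G]$ by unwinding explicit groupoid presentations on both sides. First I would invoke Remark \ref{rmk:identifyingolssonversion} to identify $B = X \times_{\af{X}} X$, where $X \to \af{X}$ is the Artin fan map. Since the action $G \action X$ is firm, Lemma \ref{lem:firmAF} gives $\af{[X/G]} = \af{X}$, so the map $X \to \af{X}$ is canonically $G$-invariant and factors through $[X/G] \to \af{X}$. In particular the right-hand side $[X/G]\times_{\af{X}}[X/G]$ makes sense.

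Next I would establish the general groupoid principle that for a $G$-action over a base $Y$ with trivial $G$-action, we have $[X/G]\times_Y [X/G] \simeq [(X\times_Y X)/(G\times G)]$ with componentwise action $(g_1,g_2).(x_1,x_2) = (g_1.x_1,g_2.x_2)$. Concretely, the smooth atlas $X \to [X/G]$ induces a smooth atlas $X\times_{\af{X}} X \to [X/G]\times_{\af{X}}[X/G]$, and its self-2-fibered product equals
\[
(G\times X)\times_{\af{X}}(G\times X) \;=\; G\times G\times (X\times_{\af{X}} X),
\]
by firmness (both factors of $G$ fix the map to $\af{X}$). This presents the quotient groupoid, yielding $[(X\times_{\af{X}} X)/(G\times G)]$.

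Finally I would verify that the $G\times G$-action on $X\times_{\af{X}} X$ obtained this way coincides with the $G\times G$-action on $B$ built in Lemma \ref{lem:actiononB}. Both actions are characterized by the universal property of the same Cartesian square $B = (X\times X)\times_{\af{X}\times \af{X}}\af{X}$ and the fact, guaranteed by firmness, that the composites $G\times G\times X\times X \rightrightarrows X\times X \to \af{X}\times\af{X}$ agree on the action and projection arrows. Translating this identification through $B \simeq X\times_{\af{X}} X$ gives the componentwise action, and assembling the two previous steps produces the desired isomorphism of log algebraic stacks.

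The main obstacle I expect is bookkeeping in the 2-categorical setting: checking that the groupoid presentation of the pullback of two quotient stacks really does yield the product-action quotient, and that the log structures (which are pulled back strictly along the strict maps $X\to [X/G]$ and $X\to \af X$) match on both sides. The argument is essentially a diagram chase already visible in Lemma \ref{lem:actiononB}, but the firmness hypothesis has to be used at each step to ensure the fiber product over $\af{X}$ is well-defined and has the expected groupoid description.
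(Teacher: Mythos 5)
Your proposal is correct and follows essentially the same route as the paper: identify $B$ with $X \times_{\af{X}} X$, use firmness to commute the $G \times G$-quotient past the fiber product over $\af{X}$, and invoke Lemma \ref{lem:firmAF} to relate $\af{X}$ and $\af{[X/G]}$. The paper states the key isomorphism $[X \times_{\af{X}} X / G \times G] \simeq [X/G] \times_{\af{X}} [X/G]$ without justification, so your explicit atlas computation (and the check that the resulting action agrees with the one from Lemma \ref{lem:actiononB}) simply fills in details the paper leaves implicit.
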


\begin{proof}

By Assumption \ref{ass:actionstrata}, there is an isomorphism
\[ [B / G \times G] = [X \times_{\af{X}} X / G \times G] \simeq [X / G] \times_{\af{X}} [X/G].\]
By Lemma \ref{lem:firmAF} we have
\[[X / G] \times_{\af{X}} [X/G] \simeq [X / G] \times_{\af{[X/G]}} [X/G].\]


\end{proof}

This ensures that the log Hochschild homology of a firm orbifold is equivalent to the endofunctor $h^* h_*$ where
\[ h : [X/G] \to [B/ G \times G] \simeq [X/G] \times_{\af{[X/G]}} [X/G].\]

Since by \cite{kawamata2004equivalences} the map $i_*$ is of Fourier-Mukai type, $h_*$ has both left and right adjoints, the log Hochschild cohomology for a firm orbifold as above can be defined similarly as in \ref{def:loghoch} by $h^! h_*$.

From now on, we follow \cite{arinkin2019formality} to describe the functors $h^*h_*$, $h^!h_*$ explicitly using twisted diagonals. 

First, we change the presentation of $[X/G]$ to the presentation $[X\times G/G\times G]$ where the action is given as $(g_1, g_2).(x, h) \mapsto (g_1.x, g_2hg_1^{-1})$. With this presentation, the diagonal map $[X/G]\to [X\times X/G\times G]$ becomes the quotient by $G\times G$ of the twisted diagonal maps
\[\Delta_g:X\to X\times X\]
defined as $x\mapsto (x,g.x)$. Since the diagonal map $[X/G]\to [X\times X/G\times G]$ factors through $[B/G\times G]$, we see that the twisted diagonal maps factors through $B$, obtaining maps $i_g:X\to B$ that we call the \textit{log twisted diagonal maps}. From the discussion above, we see that the log twisted diagonal maps can be explicitly described, namely $i_g(x)=(1,g).i(x)$.

Thus, we have
\[h^*h_*(-)\simeq \bigoplus_{g\in G}i_g^*i_*(-).\]
and
\[h^!h_*(-)\simeq \bigoplus_{g\in G}i_g^!i_*(-)\]
where the functors on the right-hand side are naturally equipped with an action of $G$. 


\subsection{Formality of orbifold Hochschild co/homology}

Let $C$ be a variety with closed subvarieties $A$ and $B$. We assume that both the embeddings $i:A\to C$ and $j:B\to C$ are local complete intersections. 
\begin{equation}\label{diag:lciofcarieties}
    \begin{tikzcd}
    A\cap B\ar[d, "q", swap] \ar[r, "p"] \pb & A\ar[d]\\
    B\ar[r] & C
\end{tikzcd}
\end{equation}
Similar to the case of self-intersections in Section \ref{sec:formselfint}, one can ask when the derived intersection $W:=A\times^h_C B$ is as simple as possible. 

\begin{definition}
    
The \emph{excess intersection bundle} $E$ is the cokernel of the natural map of normal bundles fitting into a short exact sequence
\[0\to N_{A\cap B/B}\to N_{A/C}|_{A\cap B}\to E\to 0.\]

\end{definition}

Analogously to section \ref{sec:loghoch}, when $E$ is locally free we have notions of formality:
\begin{itemize}
\item Algebraic formality is when the dg-algebra of the structure sheaf of $W$ is isomorphic to the dg-algebra of the symmetric algebra of the shifted dual excess bundle \eqref{eqn:formalalg}.
\item Geometric formality is when the derived intersection $W$ is the total space of the shifted excess bundle \eqref{eqn:formalgeom}. 
\end{itemize}

We generalize \cite[Proposition 3.3]{arinkin2019formality} to local complete intersections. The proof is straightforward. 

\begin{theorem}\label{thm:formalityderived}
    Consider the situation of Diagram \eqref{diag:lciofcarieties} and assume that the associated excess bundle $E$ is locally free. Assume the local complete intersection $A\to C$ possesses a quantized cycle and the short exact sequence of locally free sheaves
    \[0\to N_{A\cap B/B}\to N_{A/C}|_{A\cap B}\to E\to 0\]
    is split. 
    \begin{enumerate}
        \item The derived intersection is formal in the geometric sense, i.e, there exists an isomorphism of dg-schemes $W\simeq E[-1]$ over $A\times B$.
        \item The derived intersection is formal in the algebraic sense, i.e, there exists an isomorphism of dg-algebras $\OO_W\simeq \Sym(E^\vee[1])$ (once it is pushed forward to $A\times B$).
        \item There exists an isomorphism of dg endofunctors $\D(A)\to \D(B)$
    \[j^*i_*(-)\simeq q_*\left(p^*(-)\otimes \Sym(E^\vee[1])\right).\]
    \end{enumerate}
\end{theorem}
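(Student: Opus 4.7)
The plan is to establish the three statements by first showing they are equivalent and then verifying $(2)$ via an adaptation of the Arinkin-Caldararu HKR machinery to the lci setting.

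First, I would show $(1) \Leftrightarrow (2) \Leftrightarrow (3)$. The equivalence $(1) \Leftrightarrow (2)$ is essentially definitional, as a dg-scheme supported on $A \cap B$ is determined by its dg structure sheaf pushed forward to $A \times B$. For $(2) \Rightarrow (3)$, I would invoke base change along Diagram \eqref{diag:lciofcarieties}: for any $\mathcal{F} \in \D(A)$ we have $j^* i_* \mathcal{F} \simeq q_*(p^*\mathcal{F} \otimes_{\OO_{A \cap B}} \OO_W)$, where the derived structure of $W$ is carried by $\OO_W$ over the classical intersection $A \cap B$. Applying $(2)$ produces the formula in $(3)$. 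The converse $(3) \Rightarrow (2)$ is immediate by specializing to $\mathcal{F} = \OO_A$ and comparing with the base change isomorphism.

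To prove $(2)$, I would follow the strategy of \cite{arinkin2019formality} and construct an explicit HKR-type morphism of dg-algebras $\Sym(E^\vee[1]) \to \OO_W$ on $A \cap B$. The quantized cycle $\sigma: A^{(1)} \to A$ produces, exactly as in the proof of Theorem \ref{thm:formality}, a natural identification of $i^* i_* \OO_A$ with $\Sym(N^\vee_{A/C}[1])$ via the Arinkin-Caldararu map \cite{arinkin-caldararu}. Restricting this map along the closed embedding $A \cap B \hookrightarrow A$ and composing with the surjection $N^\vee_{A/C}|_{A \cap B} \twoheadrightarrow E^\vee$ afforded by the splitting of the excess sequence yields the desired map in degree $-1$, which extends multiplicatively to the full symmetric algebra.

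The hard part will be verifying that this constructed map is a quasi-isomorphism. Since the question is local, I would reduce to the affine setting in which $A$ and $B$ are cut out by regular sequences in $C$ and the splitting is realized by a choice of complementary generators. A direct Koszul computation shows that $\OO_W$ has cohomology sheaves $\bigwedge^k E^\vee$, and the splitting assumption ensures that the higher differentials in the dg-algebra $\OO_W$ vanish after the identification with $\Sym(E^\vee[1])$. The only new ingredient compared to the smooth case of \cite[Proposition 3.3]{arinkin2019formality} is the passage from smooth embeddings to local complete intersections; this is harmless because the lci hypothesis guarantees that $\ccx{A/C} \simeq N^\vee_{A/C}[1]$ is concentrated in a single degree with locally free cohomology, so the smooth-case constructions carry over verbatim, and the canonical nature of the HKR map ensures the local quasi-isomorphisms patch into a global one.
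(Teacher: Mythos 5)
First, a caveat: the paper's own ``proof'' of Theorem \ref{thm:formalityderived} is literally omitted (it is announced as a straightforward generalization of \cite[Proposition 3.3]{arinkin2019formality}), so there is no argument to compare against line by line. Your overall architecture --- reduce everything to statement (2), build a global HKR-type comparison map out of the quantized cycle together with the splitting of the excess sequence, and check that it is a quasi-isomorphism by a local Koszul computation --- is exactly the architecture of that reference and of the paper's proof of Theorem \ref{thm:formality}, and your derivation of (3) from (2) via base change and the projection formula, $j^*i_*\mathcal F\simeq q_*(p^*\mathcal F\otimes\OO_W)$, is correct.

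The step that does not work as written is the construction of the comparison map by ``restricting the Arinkin--Caldararu map along $A\cap B\hookrightarrow A$.'' Since $i\circ p=j\circ q$, the derived restriction of $i^*i_*\OO_A$ to $A\cap B$ is $p^*i^*i_*\OO_A\simeq q^*j^*i_*\OO_A\simeq q^*q_*\OO_W$, \emph{not} $\OO_W$: its $\mathcal{H}^{-1}$ is all of $N^\vee_{A/C}|_{A\cap B}$, with an extra $N^\vee_{A\cap B/B}$ coming from $\mathcal{T}or_1^{\OO_B}(\OO_{A\cap B},\OO_{A\cap B})$. So restricting the self-intersection HKR isomorphism and projecting onto $\Sym(E^\vee[1])$ yields a statement about $q^*q_*\OO_W$, and the passage from there to the algebra isomorphism $\OO_W\simeq\Sym(E^\vee[1])$ --- equivalently, the verification that the composite $E^\vee\to N^\vee_{A/C}|_{A\cap B}\to\mathcal{H}^{-1}(\OO_W)=E^\vee$ obtained from your splitting and the counit $q^*q_*\OO_W\to\OO_W$ is the identity --- is precisely the nontrivial point, not a formality. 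Similarly, ``extends multiplicatively'' presupposes a lift $E^\vee[1]\to\OO_W$ against the truncation $\OO_W\to\tau^{\geq -1}\OO_W$, whose existence is the actual content. The repair is to imitate the proof of Theorem \ref{thm:formality} relative to $B$: factor $i$ through the first infinitesimal neighborhood $A^{(1)}$, base change along $j$, use the retraction $\sigma$ and the projection formula to pull the test object out, and only then invoke the splitting of the excess sequence to identify the residual bundle with $E^\vee$. Finally, the splitting hypothesis is needed to \emph{construct} the global map, not (as you suggest) to kill higher differentials: once $E$ is locally free, formality is automatic locally, and the local quasi-isomorphism check is where no hypotheses are consumed.
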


\begin{proof}

Omitted. 
    
\end{proof}


We use the theorem above in the case of the logarithmic twisted diagonals
\[i_g : X \to X \times X; \qquad x \mapsto (x, g.x)\]
of a firm action $G \action X$. These factor through $B$. 

\begin{definition}

The \textit{logarithmic fixed point locus} $X^g_{\log}$ is the underived intersection of $i:X\to B$ and $i_g:X\to B$ 
\begin{equation}\label{eqn:diagramfixedlocus}
    \begin{tikzcd}
    X^g_{\log} \ar[r, "p"] \ar[d, "q", swap] \lpbstrict & X\ar[d, "i"]\\
    X\ar[r, "i_g", swap] & B.
    \end{tikzcd}
\end{equation}

\end{definition}

Denote the excess bundle of this intersection by $E$. 



    


\begin{lemma}

    The schemes $X^g_{\log}$ are log smooth, and we have an isomorphism of bundles
    \[E^\vee\simeq \Omega^{1,\log}_{X^g_{\log}}=\left(\Omega^{1,\log}_X\right)_g\]
    where $\left(\Omega^{1,\log}_X\right)_g = \Omega^{1,\log}_X / \langle \omega - g \omega \rangle$ denotes the $g$-coinvariants of $\Omega^{1,\log}_X$.
    
\end{lemma}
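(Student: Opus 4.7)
The plan is to work étale-locally at a $g$-fixed point $x \in X^g_{\log}$, construct a $g$-equivariant Kato chart on which the action is linearized, and then read off both log smoothness and the cotangent identification by direct calculation. By firmness, $g$ acts trivially on $\af{X}$, so on a strict smooth Kato chart $U \to \Aff_P$ near $x$ the action of $g$ on $\Aff_P$ is through a character $\chi_g$ of the dense torus $T_P = \Spec k[\gp{P}]$. Using that $|G|$ is invertible in $k$, I would apply Luna's étale slice theorem to the smooth fibers of $U \to \Aff_P$ at $x$ to reduce $U$ to a product $\Aff_P \times \Aff^n_{y_1, \ldots, y_n}$ with $g$ acting diagonally by $a \mapsto \chi_g \cdot a$ on the first factor and $y_j \mapsto \lambda_j y_j$ on the second.

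In this local model, Example~\ref{ex:toric} combined with Theorem~\ref{thm:AFdiagonal} identifies $B_U \simeq \Aff_P \times T_P \times \Aff^n \times \Aff^n$, with map to $U \times U$ given by $(a, t, y, y') \mapsto ((a, y), (ta, y'))$. The log diagonals read $i(a, y) = (a, 1, y, y)$ and $i_g(a, y) = (a, \chi_g, y, \lambda y)$, so the scheme-theoretic intersection $X^g_{\log} = U \times_{B_U} U$ is cut out by $1 = \chi_g$ in $T_P$ and $y = \lambda y$ in $\Aff^n$. The first condition forces $\chi_g = 1$ near $x$ (otherwise the intersection would be empty and $x \notin X^g_{\log}$, a contradiction), while the second cuts out $\Aff^{n_g} \subseteq \Aff^n$ where $n_g = \#\{j : \lambda_j = 1\}$. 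Hence locally $X^g_{\log} \simeq \Aff_P \times \Aff^{n_g}$, which is visibly log smooth with log structure pulled back from $\Aff_P$.

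For the cotangent identification, the chart gives
\[
\Omega^{1,\log}_X \simeq \bigoplus_i \OO \cdot d\log p_i \oplus \bigoplus_j \OO \cdot dy_j,
\]
on which $g$ acts trivially on each $d\log p_i$ (since $d\log \chi_g(p_i) = 0$ for constants $\chi_g(p_i) \in k^*$) and by $\lambda_j$ on $dy_j$. The $g$-coinvariants are therefore the fixed eigenspace $\bigoplus_i \OO \cdot d\log p_i \oplus \bigoplus_{\lambda_j = 1} \OO \cdot dy_j$, which matches $\Omega^{1,\log}_{X^g_{\log}}$ on the local model. Dually, $N_{X/B}|_{X^g_{\log}} \simeq T^{\log}_X|_{X^g_{\log}}$ by Lemma~\ref{lem:xtoblci}, and the inclusion $N_{X^g_{\log}/X} \hookrightarrow N_{X/B}|_{X^g_{\log}}$ is the inclusion of the moving directions $\partial_{y_j}$ for $\lambda_j \neq 1$. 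The cokernel $E$ is locally free and supported on the fixed eigendirections, and dualizing yields $E^\vee \simeq \Omega^{1,\log}_{X^g_{\log}} = (\Omega^{1,\log}_X|_{X^g_{\log}})_g$. Since the eigenspace splitting is canonical, the local identifications glue.

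The main obstacle will be justifying the $g$-equivariant linearization in the first step. Firmness reduces the problem to linearizing the finite-order log automorphism $g$ along the smooth fibers of $U \to \Aff_P$, where $|G|$ invertible in $k$ lets classical étale slice arguments apply; but one must keep careful track of the log structure throughout and ensure the chart can genuinely be chosen $g$-equivariantly, rather than merely $g$-invariant after further localization.
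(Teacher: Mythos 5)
Your argument is correct, but it takes a genuinely different route from the paper. The paper's proof is short and top--down: it takes the dual excess sequence $0\to E^\vee \to N^\vee_{X/B}|_{X^g_{\log}} \to N^\vee_{X^g_{\log}/X}\to 0$, identifies the middle term with $\lkah{X}$ via the earlier identification $N^\vee_{X/B}\simeq\lkah{X}$, invokes Cartan's lemma to identify the last term with the moving part of $\lkah{X}$ (so that $E^\vee$ is the fixed part, i.e.\ the $g$-coinvariants), and deduces log smoothness from the transitivity triangle for $q:X^g_{\log}\to X$. You instead build an explicit $g$-equivariant linearized chart $U\simeq \Aff_P\times\Aff^n$, compute $B_U\simeq \Aff_P\times T_P\times\Aff^n\times\Aff^n$ from Example \ref{ex:toric}, and read off $X^g_{\log}\simeq\Aff_P\times\Aff^{n_g}$ together with the eigenspace description of $E^\vee$. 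What your approach buys: it makes transparent the genuinely logarithmic phenomenon that $X^g_{\log}$ is cut out by the extra condition $\chi_g=1$ in $T_P$ and can therefore be empty where the naive fixed locus is not (exactly the paper's closing example of $\mu_2\action\Aff^1$), and it makes the log smoothness and the local freeness of $E$ visible rather than inferred. What it costs is the chart construction itself: the paper's abstract route only needs linearization of the action on the fiber of $\lkah{X}$ at a point, whereas you need a $g$-equivariant strict chart with constant character $\chi_g$, which requires trivializing the unit-valued cocycle $g^*m=u_m\cdot m$ on $M_X$ down to $k^*$ (possible since $1+\frak m$ is uniquely $N$-divisible for $N=o(g)$ invertible, e.g.\ after henselization) before applying the Luna-type slice along the smooth strict map $U\to\Aff_P$. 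You correctly flag this as the main obstacle; it is a standard but nonvacuous verification, and note that the paper's own appeal to Cartan's lemma rests on the same invertibility of $\# G$. Two small corrections: the identification $N_{X/B}\simeq T^{\log}_X$ comes from Corollary \ref{cor:formalityloghochschild}, not Lemma \ref{lem:xtoblci}; and since $E^\vee$ and the comparison maps are defined globally, your closing remark about gluing is fine --- it suffices to check the isomorphism on your local models.
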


\begin{proof}
    Consider the short exact sequence
    \[0\to E^\vee \to N^\vee_{X/B}|_{X^g_{\log}} \to N^\vee_{X^g_{\log}/X}\to 0\]
    arising from Diagram \eqref{eqn:diagramfixedlocus}. Identify the middle term with $\Omega^{1,\log}_X$. Since $i_g$ is obtained from $i$ via the action with a group element $g$ of finite order, we see that the last term is given by the $g$-invariants of $\Omega^{1,\log}_X$ by Cartan's lemma.

    Thus, the excess bundle can be identified with $\left(\Omega^{1,\log}_X\right)_g$ which is also a locally free sheaf. Using the embedding $q:X^g_{\log}\to X$, we see that $E^\vee\simeq \Omega^{1,\log}_{X^g_{\log}}=\left(\Omega^{1,\log}_X\right)_g$. Writing the distinguished triangle associated to the morphism $q$ one sees that the higher terms vanish, thus the schemes $X^g_{\log}$ are log smooth.

\end{proof}

Since the averaging map 
\[\left(\lkah{X}\right)_g\to\lkah{X}\]
defined as \[\omega\mapsto \sum_{i=1}^{o(g)}\frac{1}{o(g)} g^i\omega\]
splits the excess bundle short exact sequence, the conditions of Theorem \ref{thm:formalityderived} are satisfied.

\begin{corollary} The intersection depicted in Diagram \eqref{eqn:diagramfixedlocus} is formal in the sense of Theorem \ref{thm:formalityderived}. In particular, there are natural isomorphisms of endofunctors
    \[i_g^*i_*(-) \simeq q_*\left(p^*(-)\otimes \Sym\left(\left(\lkah{X}\right)_g[1]\right)\right)\]
    and
    \[i_g^!i_*(-) \simeq q_*\left(p^!(-) \otimes \Sym\left(\left(T^{\log}_X\right)^g[-1]\right)\right).\]\qed
\end{corollary}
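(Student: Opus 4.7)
The plan is to apply Theorem \ref{thm:formalityderived} to the Cartesian square of Diagram \eqref{eqn:diagramfixedlocus} with ambient variety $C = B_X$, subvarieties $A = X$ (embedded via $i_g$) and the second copy of $X$ (embedded via $i$), and intersection $A\cap B = X^g_{\log}$. First I would verify the hypotheses. The map $i:X\to B_X$ is a local complete intersection admitting a quantized cycle by Lemma \ref{lem:xtoblci} and Proposition \ref{prop:formalityloghochschild}. Since $i_g$ is the composite of $i$ with the automorphism of $B_X$ induced by $(1,g)\in G\times G$ from Lemma \ref{lem:actiononB}, it is also a regular embedding of the same codimension, and it inherits a quantized cycle by transport of structure. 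The preceding lemma identifies the excess bundle as $E^\vee\simeq (\lkah{X})_g$, which is locally free. The required splitting of $0\to E^\vee\to N^\vee_{X/B_X}|_{X^g_{\log}}\to N^\vee_{X^g_{\log}/X}\to 0$ is provided by the averaging map $\omega\mapsto \frac{1}{o(g)}\sum_{i=1}^{o(g)} g^i\omega$, well-defined because $|G|$ is invertible in $k$.

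With the hypotheses verified, part (3) of Theorem \ref{thm:formalityderived} directly yields the first asserted isomorphism
\[i_g^*i_*(-)\simeq q_*\bigl(p^*(-)\otimes \Sym((\lkah{X})_g[1])\bigr).\]

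For the cohomology version, I would pass to right adjoints via relative Grothendieck duality. Both $i$ and $i_g$ are regular embeddings of the same codimension $d$, so $i_g^!(-)\simeq i_g^*(-)\otimes \det(N_{X/B_X, i_g})[-d]$ and similarly for the two embeddings $p$ and $q$ of $X^g_{\log}$. The $(1,g)$-twist identifies $N_{X/B_X, i_g}$ with the $g$-twist of $N_{X/B_X, i}$, so after applying the averaging projector these twists drop out up to canonical isomorphism. Combining this with the standard identification $\Sym(V[1])^\vee\otimes \det V\simeq \Sym(V^\vee[-1])$ (up to the appropriate shift) and with $((\lkah{X})_g)^\vee\simeq (\Tl{X})^g$ (since averaging realises invariants as a direct summand dual to coinvariants), one converts the formality statement into its dual form and obtains
\[i_g^!i_*(-)\simeq q_*\bigl(p^!(-)\otimes \Sym((\Tl{X})^g[-1])\bigr).\]

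The main obstacle will be the bookkeeping of the $g$-equivariance in the Verdier-duality computation for the cohomology formula: one must check that the dualizing-line twists coming from $i_g^!$ versus $i^!$ and from $p^!$ versus $q^!$ cancel coherently under the averaging splitting of $E^\vee$. The homology statement is essentially immediate from Theorem \ref{thm:formalityderived}; what needs care is to make the identifications functorial in a way compatible with the $G$-action mentioned after the decomposition $h^*h_*\simeq\bigoplus_g i_g^*i_*$, so that summing over $g\in G$ and taking $G$-invariants produces the orbifold formula of Theorem \ref{mainthm:orbifold}.
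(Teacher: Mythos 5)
Your proposal is correct and follows exactly the route the paper intends: the corollary carries its \verb|\qed| in the statement because it is meant to follow immediately from the preceding lemma (identifying $E^\vee\simeq(\lkah{X})_g$) together with the averaging splitting, which is precisely what you verify before invoking Theorem \ref{thm:formalityderived}(3); your duality argument for the $i_g^!$ formula likewise parallels the paper's treatment of $i^!$ in Corollary \ref{cor:formalityloghochschild}. The only nitpick is that you label the roles of $A$ and $B$ in Diagram \eqref{diag:lciofcarieties} opposite to what the target functor $i_g^*i_*$ requires (the quantized cycle is needed on the embedding being pushed forward, i.e.\ $i$), but since you check that both $i$ and $i_g$ admit quantized cycles this is harmless.
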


In the realm of derived algebraic geometry, the corollary above says that the \textit{logarithmic derived loop space} of the orbifold $[X/G]$ defined as the derived fibre product $L^{\log}[X/G]:=[X/G]\times^h_{[B/G\times G]}[X/G]$ is isomorphic to the shifted tangent bundle on the \textit{logarithmic inertia stack} defined as the underived fibre product $I^{\log}[X/G]:=[X/G]\times_{[B/G\times G]}[X/G]$ as dg-schemes:
\[\mathbb{T}_{I^{\log}[X/G]}[-1]\simeq L^{\log}[X/G].\]


    

As a corollary, we obtain log version of the decomposition of orbifold Hochschild homology (see for instance, \cite{baranovsky2003orbifold, ganter2011inner} for the classical, non-logarithmic decomposition). 

\begin{corollary}\label{cor:logorbifolddecom}
We have isomorphisms of $k$-vector spaces
\[R^n\Gamma(\HHl{[X/G]}(\OO_{[X/G]}))=\left(\bigoplus_{g\in G} R^n\Gamma(\HHl{X^g_{\log}}(\OO_{X^g_{\log}}))\right)^G=\]
\[=\left(\bigoplus_{g\in G} \bigoplus_{q-p=n}H^p(X^g_{\log}, \Omega^{q,\log}_{X^g_{\log}})\right)^G.\]
\end{corollary}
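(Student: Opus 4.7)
The plan is to assemble this from three inputs already established: (i) the decomposition of $h^*h_*$ over the logarithmic twisted diagonals, (ii) the formality theorem for each such twisted intersection, and (iii) the log HKR theorem (Theorem \ref{thm:loghkr}) applied to each logarithmic fixed locus $X^g_{\log}$. The output is globalized by taking $G$-invariants, which is exact since $|G|$ is invertible in $k$.

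First I would unwind the definition. By construction $\HHl{[X/G]}(\OO_{[X/G]}) = h^*h_*(\OO_{[X/G]})$ for the log diagonal $h : [X/G]\to [B/G\times G]$. Pulling back to the cover $X\to[X/G]$ and using the change of presentation leading to the identification $h^*h_*(-)\simeq \bigoplus_{g\in G} i_g^*i_*(-)$ (equivariant for the natural $G$-action permuting the summands by conjugation), one obtains a $G$-equivariant isomorphism
\[
R\Gamma\bigl([X/G],\HHl{[X/G]}(\OO_{[X/G]})\bigr) \;\simeq\; \Bigl(\bigoplus_{g\in G} R\Gamma\bigl(X, i_g^* i_* \OO_X\bigr)\Bigr)^{G}.
\]
This uses that sheaves on $[X/G]$ are $G$-equivariant sheaves on $X$ and that $R\Gamma$ of a quotient stack by a finite group of order invertible in $k$ is the $G$-invariants of $R\Gamma$ on the cover.

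Next I would apply the formality corollary proved just above the statement. For each $g\in G$, the diagram \eqref{eqn:diagramfixedlocus} defining $X^g_{\log}$ has locally free excess bundle identified with $(\Omega^{1,\log}_X)_g \simeq \Omega^{1,\log}_{X^g_{\log}}$, and the averaging map splits the relevant extension, so Theorem \ref{thm:formalityderived} yields
\[
i_g^* i_*(\OO_X) \;\simeq\; q_*\bigl(\OO_{X^g_{\log}} \otimes \Sym(\Omega^{1,\log}_{X^g_{\log}}[1])\bigr).
\]
Taking $R\Gamma$ and using $q_*$ is affine along the closed embedding $q: X^g_{\log}\to X$, this identifies $R\Gamma(X, i_g^* i_* \OO_X)$ with $R\Gamma(X^g_{\log}, \Sym(\Omega^{1,\log}_{X^g_{\log}}[1]))$, which by definition equals $R\Gamma(X^g_{\log}, \HHl{X^g_{\log}}(\OO_{X^g_{\log}}))$ since the log HKR formality (Corollary \ref{cor:formalityloghochschild}) applies to $X^g_{\log}$ (which is log smooth by the preceding lemma). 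This gives the first equality of the corollary.

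Finally I would invoke Theorem \ref{thm:loghkr} on each log smooth $X^g_{\log}$ to rewrite
\[
R^n\Gamma\bigl(X^g_{\log}, \HHl{X^g_{\log}}(\OO_{X^g_{\log}})\bigr) \;=\; \bigoplus_{q-p=n} H^p\bigl(X^g_{\log}, \Omega^{q,\log}_{X^g_{\log}}\bigr),
\]
and take $G$-invariants term by term, which commutes with cohomology since $|G|$ is invertible. The main technical point to check carefully is that the chain of isomorphisms is genuinely $G$-equivariant: the $G$-action permutes the loci $\{X^g_{\log}\}_{g\in G}$ by conjugation, and the formality isomorphism is natural enough (via the averaging splitting, which is canonical) that it intertwines these actions. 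Once this equivariance is verified, taking $G$-invariants completes the proof.
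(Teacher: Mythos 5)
Your proposal is correct and follows exactly the route the paper intends: the paper's own proof is just the one-line remark that the argument is parallel to Corollary 1.17 of \cite{arinkin2019formality}, and your write-up spells out that parallel argument using precisely the ingredients the section has already assembled (the decomposition $h^*h_*\simeq\bigoplus_g i_g^*i_*$, the formality of the twisted intersections with excess bundle $(\Omega^{1,\log}_X)_g\simeq\Omega^{1,\log}_{X^g_{\log}}$, log HKR on each log smooth $X^g_{\log}$, and exactness of $G$-invariants). No gaps; this is essentially the same approach, just made explicit.
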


\begin{proof}
    The proof is parallel to the proof of Corollary 1.17 in \cite{arinkin2019formality}.
\end{proof}

\begin{remark}
    We believe that the decomposition above can be generalized to other additive invariants in the sense of non-commutative motives in the sense of \cite{tabuada2018additive}. 
    
    Assume not only that the order $\# G$ is invertible in $k$, but that $k$ contains all $\# G$-roots of unity; let $k'$ be another field meeting these requirements. In \cite{tabuada2018additive}, Tabuada and van den Bergh show that for any \textit{additive invariant} $E:\dgcat_k\to D$ from the category of $k$-linear dg-categories to a $k'$-linear idempotent complete additive category, we have an isomorphism for any global quotient orbifold $[X/G]$ where $G$ acts on a smooth variety $X$,
    \[E([X/G])=\Big(\bigoplus_{g\in G}E(X^g)\Big)^G.\]
    In other words, the non-commutative motive of an orbifold decomposes with respect to the fixed point loci of the group action. 

    Even though there is no consensus on the definition of a derived category of coherent sheaves on a log scheme (Remark \ref{rem:nologcat}), one would expect that the non-commutative motive of a log orbifold decomposes with respect to the log fixed point loci as in Corollary \ref{cor:logorbifolddecom}. Therefore, we expect Corollary \ref{cor:logorbifolddecom} to hold for other additive invariants as well.
\end{remark}

We conclude the paper with a simple example in which there is no contribution to the Hochschild homology from the twisted sectors corresponding to the orbifold.

\begin{example}
Consider $\Aff^1$ with the negation action of $\mu_2 = \ZZ/2\ZZ$. We equip $\Aff^1$ with the toric log structure given by the Cartier divisor of the origin ($0$). Then $B \subseteq Bl_{\vec 0} \Aff^2$ is the complement of the strict transforms of the $x-$ and $y-$axes. The inclusions $i, i_{-1} : \Aff^1 \to B$ are the diagonal and anti-diagonal, which do not meet in the blowup.

Because the twisted diagonal $i_{-1}$ does not meet $i$, there is no contribution from $-1 \in \mu_2$. 
%
%
Thus, 
\[\HHl{[\Aff^1/\ZZ/2\ZZ]}(-) = h^* h_* (-) \simeq i^* i_* (-) \simeq (-)\otimes_{k[x]} k[x,t]\]
where $t$ has degree $-1$, and $\ZZ/2\ZZ$ acts as $x\mapsto -x$ and $t\mapsto t$.
\end{example}


\bibliographystyle{alpha}
\bibliography{refs.bib}

\end{document}